%!TEX TS-program =  pdflatex

%*************************************************************
%*****    DOCUMENT CLASS
%*************************************************************
\documentclass[12pt,fleqn,reqno]{article}

%----------------------------------------------------------------------
%% Basic math input
%----------------------------------------------------------------------
\usepackage{epsfig,amssymb,latexsym,amsmath,pifont,mathtools,verbatim,float}

%----------------------------------------------------------------------
%% Fonts and alphabets (beware of conflicts)
%----------------------------------------------------------------------
\usepackage[utf8]{inputenc}
\usepackage[T1]{fontenc}

%% Geometry
%----------------------------------------------------------------------
\usepackage[left=2.5cm, right=2.5cm, top=2.5cm, bottom=2.5cm]{geometry}

%% Math alphabets
%----------------------------------------------------------------------
\usepackage[cal=euler,bb=fourier,scr=zapfc]{mathalfa}

%----------------------------------------------------------------------
%% Figures and Graphics
%----------------------------------------------------------------------
\usepackage[font=small,labelfont=bf]{caption}
\usepackage{graphicx}
\graphicspath{{Figures/}} 
\usepackage{float}

%----------------------------------------------------------------------
%% Miscellaneous
%----------------------------------------------------------------------
\usepackage{acronym}
\usepackage{paralist}
\usepackage{wasysym}
\usepackage{xspace}
\usepackage{framed}
\usepackage{palatino,pxfonts}
\usepackage{authblk}
\usepackage{enumitem}
\usepackage[dvipsnames,svgnames]{xcolor}
\usepackage{parskip}

%----------------------------------------------------------------------
%% References
%----------------------------------------------------------------------
\usepackage[numbers,sort&compress]{natbib}

%----------------------------------------------------------------------
%% Comments
%----------------------------------------------------------------------

%----------------------------------------------------------------------
%% Hyperlinks
%----------------------------------------------------------------------
\usepackage{hyperref}
\hypersetup{
colorlinks=true,
linktocpage=true,
pdfstartview=FitH,
breaklinks=true,
pdfpagemode=UseNone,
pageanchor=true,
pdfpagemode=UseOutlines,
plainpages=false,
bookmarksnumbered,
bookmarksopen=false,
bookmarksopenlevel=1,
hypertexnames=true,
pdfhighlight=/O,
urlcolor=DodgerBlue!60!black,linkcolor=DodgerBlue!70!black,citecolor=DarkGreen!70!black
}

%----------------------------------------------------------------------
%% Cleverefs
%----------------------------------------------------------------------
\numberwithin{equation}{section}
\usepackage[sort&compress,capitalize,nameinlink]{cleveref}
\crefname{example}{Ex.}{Exs.}

\crefrangeformat{equation}{\upshape(#3#1#4)\textendash(#5#2#6)}

%*************************************************************
%*****    MACROS
%*************************************************************
\usepackage[para,online,flushleft]{threeparttable}
\usepackage{multirow}

\usepackage{physics}
\usepackage{float}
%\usepackage[ruled]{algorithm2e}
%%%%%%%%%%%%%%%
\newcommand{\eps}{\varepsilon}
\DeclareMathOperator*{\argmin}{argmin}

\DeclareMathOperator{\zer}{Zer}

\DeclareMathOperator{\diam}{diam}
\DeclareMathOperator{\dist}{dist}

\DeclareMathOperator{\dom}{dom}

\DeclareMathOperator{\gr}{Gr}

\DeclareMathOperator{\resolvent}{\mathsf{J}}

\DeclareMathOperator{\Id}{Id}

\newcommand{\ce}{\mathtt{e}}

%*************************************************************
%*****    Bold
%*************************************************************

%*************************************************************
%*****    Short Cuts
%******************************************************

\renewcommand{\emptyset}{\varnothing}
\newcommand{\eqdef}{\triangleq}

%*************************************************************
%*****    Sets
%*******************************************************

\newcommand{\scrH}{\mathcal{H}}

\newcommand{\scrK}{\mathcal{K}}

\newcommand{\scrS}{\mathcal{S}}

%*************************************************************
%*****    Probability
%**********************************************

%----------------------------------------------------------------------
%% Numbers
%----------------------------------------------------------------------

\newcommand{\R}{\mathbb{R}}

\newcommand{\N}{\mathbb{N}}

\newcommand{\K}{\mathbb{K}}

%----------------------------------------------------------------------
%% Topology
%----------------------------------------------------------------------

\DeclareMathOperator{\NC}{\mathsf{NC}}

%*************************************************************
%*****    Operators
%*************************************************************
\newcommand{\opA}{\mathsf{A}}
\newcommand{\opB}{\mathsf{B}}

\newcommand{\opF}{\mathsf{F}}
\newcommand{\opG}{\mathsf{G}}

\newcommand{\opM}{\mathsf{M}}

\newcommand{\opT}{\mathsf{T}}
\newcommand{\opZ}{\mathsf{Z}}

%----------------------------------------------------------------------
%% Optimization
%----------------------------------------------------------------------
\DeclareMathOperator{\VI}{VI}

							% for value of problem	
\newcommand{\Lip}{L}								% for Lipschitz constant
								% for gradient vector

\newcommand{\GapOpt}{\operatorname{Gap}_{\operatorname{opt}}}
\newcommand{\GapFeas}{\operatorname{Gap}_{\operatorname{feas}}}
\newcommand{\Zer}{\operatorname{Zer}}
\newcommand{\Fix}{\operatorname{Fix}}

%----------------------------------------------------------------------
%% Algorithms
%----------------------------------------------------------------------

\usepackage{algorithm}
\usepackage{algpseudocode}

%----------------------------------------------------------------------
%% Theorem-like
%----------------------------------------------------------------------

\usepackage{amsthm}
\newtheoremstyle{MyPlainTheorem}{12pt}{6pt}{\itshape}{0pt}{\bfseries}{.}
                { }{\thmname{#1}\thmnumber{ #2}\thmnote{ (#3)}}
\theoremstyle{MyPlainTheorem}
\newtheorem{theorem}{Theorem}
\newtheorem{corollary}[theorem]{Corollary}
\newtheorem*{corollary*}{Corollary}
\newtheorem{lemma}[theorem]{Lemma}
\newtheorem{proposition}[theorem]{Proposition}

\newtheorem{definition}[theorem]{Definition}
\newtheorem*{definition*}{Definition}
\newtheorem*{problem*}{Problem}
\newtheorem{assumption}{Assumption}
\newtheorem{remark}{Remark}
\newtheorem*{remark*}{Remark}
\newtheorem*{notation*}{Notational remark}
\newtheorem{example}{Example}

%----------------------------------------------------------------------
%% Proofs
%----------------------------------------------------------------------

%----------------------------------------------------------------------
%% Numbering
%----------------------------------------------------------------------
\numberwithin{theorem}{section}
\numberwithin{remark}{section}
\numberwithin{example}{section}

%----------------------------------------------------------------------
%% Random
%----------------------------------------------------------------------
\DeclarePairedDelimiter{\inner}{\langle}{\rangle}

\renewcommand{\norm}[1]{\|{#1}\|}
\renewcommand{\bar}{\overline}

\title{Regularization methods for solving hierarchical variational inequalities with complexity guarantees}
\date{\today}

\author[1]{\small Daniel Cortild}
\author[2]{\small Meggie Marschner,  Mathias Staudigl}

\affil[1]{\footnotesize University of Oxford, Mathematical Institute, Woodstock Road, OX2 6GG Oxford, UK\\
(\href{mailto:daniel.cortild@maths.ox.ac.uk}{daniel.cortild@maths.ox.ac.uk})}
\affil[2]{\footnotesize Mannheim University, Department of Mathematics, B6 26, 68159 Mannheim, DE\\
(\href{mailto:m.marschner@uni-mannheim.de}{m.marschner@uni-mannheim.de}, \href{mailto:m.staudigl@uni-mannheim.de}{m.staudigl@uni-mannheim.de})}

\setlength{\marginparwidth}{2cm}

\begin{document}

\maketitle

\begin{abstract}
    We consider hierarchical variational inequality problems, or more generally, variational inequalities defined over the set of zeros of a monotone operator. This framework includes convex optimization over equilibrium constraints and equilibrium selection problems. In a real Hilbert space setting, we combine a Tikhonov regularization and a proximal penalization to develop a flexible double-loop method for which we prove asymptotic convergence and provide rate statements in terms of gap functions. Our method is flexible, and effectively accommodates a large class of structured operator splitting formulations for which fixed-point encodings are available. Finally, we validate our findings numerically on various examples.
\end{abstract}

%!TEX root = ./main.tex

\section{Introduction}\label{sec:intro}

Bi-level optimization problems consist of two nested optimization formulations, referred to as the inner and the outer problem. Solutions to the inner problem determine the feasible set over which a minimizer of an outer objective function is searched for. This class of problems is an extremely active area of research recently, driven by important practical applications in engineering, economics and machine learning. Comprehensive reviews can be found in \cite{dempe2020bilevel,liu2021investigating}. A particularly appealing class of bi-level optimization problems are those in which only a single decision variable is involved at both layers of the problem. These special cases are often called simple bi-level optimization problems \cite{Dempe:2021aa}. In this special instance, the problem is given by the convex constrained minimization problem 
\begin{equation*}
    \min_{x\in \scrS_{0}}g(x), \quad\text{where}~ \mathcal S_0=\argmin_{x\in\scrH}\{f(x)+r(x)\},
\end{equation*}
 $\scrS_{0}$ is the set of minimizers of a the inner level problem, where $f$ and $g$ are convex and continuously differentiable and $r$ is an extended-valued proper closed convex function. This problem received significant attention over the last years, with important advances made in \cite{beck2014first,SabSht17,MerchavSIOPT23,AminiYous19,shen2023online} and \cite{yousefian2021bilevel}. Moving beyond the convex optimization formulation of the inner and the outer level problem, recent papers consider a more general problem formulation by replacing them by variational inequalities. While this is a classical formulation in inverse problems and signal processing \cite{deutsch1998minimizing,marino2006general,Yamada2011MinimizingTM}, only a few papers obtain rates of convergence in terms of merit functions. Motivated by narrowing this gap, we consider the problem of solving a class of constrained variational inequality problems 
\begin{equation}\label{eq:P}\tag{P}
    \VI(\opG,\scrS_{0})\colon \text{ Find $u\in\scrS_{0}$ s.t. }\inner{\opG u,v-u}\qquad\forall v\in\scrS_{0}=\zer(\opM),
\end{equation}
where $\opG\colon\scrH\to\scrH$ is a monotone and Lipschitz continuous operator, and $\scrS_{0}=\zer(\opM)$ is the (non-empty) set of zeros of another maximally monotone operator $\opM\colon\scrH\to 2^{\scrH}$, all defined on a real Hilbert space $\scrH$ with inner product $\inner{\cdot,\cdot}$ and corresponding norm $\norm{\cdot}$. Besides the simple bi-level optimization problem mentioned above, the family of problems \eqref{eq:P} contains many specific instances heavily studied in the literature. 

\begin{example}[Hierarchical Variational Inequality]
    Given a mapping $\opF\colon\scrH\to\scrH$ and a closed convex set $\scrK\subseteq\scrH$,  the variational inequality problem $\VI(\opF,\scrK)$, is the problem of finding a point $u\in\scrK$ such that 
     \[
     \inner{\opF(u),v-u}\geq0\qquad\forall v\in\scrK. 
     \]
     The solution set of $\VI(\opF,\scrK)$ can be expressed as $\zer(\opF+\NC_{\scrK})$, involving the normal cone operator 
    \[
     \NC_{\scrK}(v)\eqdef\left\{\begin{array}{ll} 
     \emptyset & \text{if }v\notin\scrK\\
     \{p\in\scrH\vert \inner{p,u-v}\leq 0,\;\forall u\in\scrK\} & \text{ else.}
     \end{array}\right.
     \]
    Hence, by setting $\opM=\opF+\NC_{\scrK}$, Problem \eqref{eq:P} specializes to a hierarchical variational inequality problem 
    \begin{equation*}
    \VI(\opG,\scrS_{0}),\quad\text{where $\scrS_{0}$ is the solution set of $\VI(\opF,\scrK)$}.
    \end{equation*}
    For recent studies of such problem formulations in finite-dimensional Euclidean spaces, we refer to \cite{alves_inertial_2025} and \cite{samadi_improved_2025}.
\end{example}

\begin{example}[Equilibrium Selection]
    A mathematical program with equilibrium constraints \cite{Luo_Pang_Ralph_1996} optimizes a real-valued function over a feasible set described as the solution set of an equilibrium problem. As such, one can select amongst lower-level equilibria via a convex design criterion $g\colon\mathcal{H}\to\mathbb{R}$ by writing
    \[
    \min_{u\in\scrH}\left\{g(u) ~~\colon~~ u \in\scrS_{0} \right\}.
    \]
    If $g$ is convex and Fr\'echet differentiable, this becomes a hierarchical variational inequality with $\opG=\nabla g$, hence a particular instance of Problem \eqref{eq:P}. This viewpoint covers a broad range of models encountered in, e.g., engineering, economics, inverse problems, and control (see \cite{hintermuller2011first,hintermuller2015bilevel,hintermuller2014several,outrata2000generalized,de2023bilevel}).
\end{example}

\subsection{Contributions}

Numerical schemes with complexity statements for resolving Problem \eqref{eq:P} have been recently published in the papers \cite{Kaushik:2021aa,samadi_improved_2025} and \cite{alves_inertial_2025}. \cite{Kaushik:2021aa} extend the simple bi-level setting to a lower level problem defined in terms of a variational inequality, while keeping the upper level problem as a convex optimization problem. This is a particular case of Problem \eqref{eq:P}, which is related to mathematical programming with equilibrium constraints \cite{Luo_Pang_Ralph_1996}, and to the equilibrium selection problem in game theory \cite{Benenati:2022aa,Benenati:2023aa}. The recent papers \cite{samadi_improved_2025,alves_inertial_2025} go one step further and study a special instance of Problem \eqref{eq:P} in which the upper- and lower-level problems are defined in terms of a variational inequality.

In this paper we generalize the aforementioned existing results, by considering a significantly larger class of hierarchical equilibrium problems, admitting the formulation \eqref{eq:P}. We prove rates on both levels of the equilibrium problem via a very flexible numerical scheme, which can easily be modulated to prevailing problem structure (specifically of the lower-level inclusion problem). Our main contributions are threefold:
\begin{itemize}
    \item \textbf{Hilbert space setting.} We extend analyses that are typically confined to finite-dimensional Euclidean spaces to possibly infinite-dimensional real Hilbert spaces.
    \item \textbf{Monotone-inclusion lower level and modular oracle.} Rather than considering a variational inequality at the lower level, we allow the feasible set to be defined as the solution set of an abstract monotone inclusion problem. Within this general framework, we adopt a very general algorithmic design based on a monotone-inclusion oracle obtained through a fixed-point encoding, which can be tailored to the structure of the operators involved. As a result, our numerical scheme and our analysis is applicable to entire families of relevant splitting methods, and not just a single one. 
    \item \textbf{Quantitative guarantees via gap functions.} While much of the existing literature focuses on weak convergence results without explicit rate statements, we introduce gap functions for both the lower- and upper-level problems and establish quantitative convergence guarantees for these measures. Our results are formulated within a specific geometric setting in which we assume that the gap function associated with the lower-level problem satisfies a Hölderian error bound condition. Such settings have already been considered in \cite{Cabot2005} in the context of hierarchical optimization, and more recently in \cite{samadi_improved_2025} and \cite{boct2025accelerating}.
\end{itemize}

We achieve these results via the careful design of an algorithmic template drawing inspiration from classical diagonal methods, involving iterative Tikhonov and proximal regularization via anchor terms. By constructing a family of auxiliary combined equilibrium problems, our method constructs a sequence in an inner loop method, which is designed to track a sequence of temporal solutions. This procedure is essentially an inertial Krasnoselskii-Mann (KM) iteration and thus only relies on the availability of a ready-to-compute fixed-point encoding map for the lower-level solution set $\scrS_{0}=\zer(\opM)$.   An outer-loop restarting procedure updates the Tikhonov parameter and the anchor term to steer the trajectory of the numerical algorithm towards a solution of \eqref{eq:P}.
 
Diagonal schemes for solving convex programs are rather classical (see e.g. \cite{alart1991penalization}). The idea of extending this approach to hierarchical equilibrium problems is taken from \cite{Facchinei:2014aa} and \cite{Lampariello:2020aa}. In particular, \cite{Lampariello:2020aa} also proposes a kind of double-loop scheme with a forward-backward algorithm acting in the inner loop. In this paper, we consider a more general class of splitting problems, with explicit rate guarantees at both levels of the problem. Specifically, in the case where $\opG$ is merely monotone (as opposed to strongly monotone), we obtain weak convergence of the averaged sequence of iterates $(\overline w_n)_{n\in\N}$, with rates of the form, for arbitrary $b\in (0, 1)$,
\[
    \GapOpt(\overline{w}_{n})\leq \tilde{\mathcal O}\left(\frac1{n^{1-b}}\right), \quad \text{and}\quad 0\leq \GapFeas(\overline{w}_{n})\leq \tilde{\mathcal O}\left(\frac1{n^{\min(b, 1-b)}}\right),
\]
for suitably defined optimality and feasibility gaps (see Section \ref{S:conv} for precise definition). In this setting, the integer $n$ measures the number of restarts, and $\tilde {\mathcal O}$ disregards logarithmic factors, which are consequences of the double-loop structure of our algorithm. Besides these logarithmic factors, our work is on par with published single-loop algorithms:
\begin{itemize}
    \item In \cite[Corollary 3.5]{Kaushik:2021aa}, the authors study optimization problems with Cartesian variational inequality constraints, where the objective is convex and the constraint set is a Cartesian product associated with a monotone mapping. Their algorithm resembles a forward-backward method, cast into a single-loop scheme. For the deterministic variant of their method, they obtain rates of $\mathcal O(1/N^{0.5-b})$ for the optimality gap and $\mathcal O(1/N^{b})$ for the feasibility gap, for $b\in(0,0.5)$, $N$ being the iteration count. Therefore, ignoring logarithmic factors, the rates achieved in the our work are more favorable.
    \item In \cite[Theorem 3.6]{samadi_improved_2025}, the authors study a regularized extragradient method, and establish rates of $\mathcal O(1/N^{1-b})$ for the optimality gap and $\mathcal O(1/N^{b})$ for the feasibility gap. In the $b\in (0, 0.5]$ regime our rates match with theirs, and we improve upon their rates in the $b\in (0.5, 1)$ regime. We do again recall that our rates include logarithmic terms that their avoids. We note that \cite[Theorem 3.8]{alves_inertial_2025} studies an inertial version of the same method, but does not improve upon the rates.
    \item The recent work \cite[Theorem 9]{marschner_tikhonov_2025} analyzes a special case of our general algorithmic template, focusing on the forward-backward splitting only. The analysis provided is weaker, in particular, no rate statements are contained in that work.
\end{itemize}
When $\opG$ is strongly monotone, our rates improve to 
\[
    \GapOpt(\overline{w}_{N})\leq \tilde{\mathcal O}\left(\frac1{N}\right), \quad \text{and} \quad 0\leq \GapFeas(\overline{w}_{N})\leq \tilde{\mathcal O}\left(\frac {1}N\right),
\]
where, again, $N$ measures the number of restarts. These rates are comparable to those in \cite[Theorem 4.6]{samadi_improved_2025}, where a regularized extragradient method is analyzed. %To the best of our knowledge, no other works address hierarchical programming problems in the strongly monotone setting in a manner that allows a direct comparison.

\paragraph{Organization of the paper.}
The remainder of the paper is organized as follows. The rest of this section includes notation and definitions. A general presentation of the double loop scheme is presented in Section \ref{sec:algo}, and its analysis is given in Section \ref{S:analysis}. Section \ref{sec:splitting} gives examples for the fixed-point encoding map used in the inner loop of our method. Numerical experiments are then performed in Section \ref{sec:numerics}. We conclude in Section \ref{sec:conc}.

\subsection{Notation and Definitions}
We adopt standard notation and terminology from variational analysis, as in \cite{BauCom16}. Let $\scrH$ be a real Hilbert space with inner product $\inner{\cdot,\cdot}$ and corresponding norm $\norm{\cdot}$. A set-valued operator $\opA\colon\scrH\to 2^{\scrH}$ is $\mu$-monotone if $\mu\geq 0$ and
\[
\inner{b-a,u-v}\geq \mu\norm{u-v}^{2} \text{ for all } (u, b), (v, a)\in \gr(\opA)\eqdef\{(v,a)\in\scrH\times\scrH\vert a\in\opA(v)\}.
\]
If $\mu=0$, we will say $\opA$ is merely monotone. The operator $\opA\colon\scrH\to 2^{\scrH}$ is maximally monotone if it is monotone and there exists no other monotone operator whose graph properly contains $\gr(\opA)$. The domain of an operator $\opA$ is $\dom(\opA)=\{x\in\mathcal{H}\vert \opA(x)\neq\emptyset\}$. A mapping $\opT\colon\scrH\to\scrH$ is \textit{quasi-contractive} if $\|Tx-p\|\leq q\|x-p\|$ for all $p\in\textnormal{Fix}(T)$ and some $q\in(0,1)$, where $\textnormal{Fix}(\opT)\eqdef\{x\in\scrH\vert x=\opT(x)\}$ is the set of fixed points of $\opT$. If the same inequality holds for $q=1$, then $\opT$ is \emph{non-expansive}.\\
The resolvent of an operator $\opA$ is defined as $\resolvent_{\opA}\eqdef(\Id+\opA)^{-1}$. By Minty's theorem, if $\opA$ is maximally monotone, then the resolvent is a nonexpansive operator and thus single-valued. The normal cone mapping to a closed convex and non-empty set $\mathcal{K}\subset\mathcal{H}$ is defined as 
$\NC_{\mathcal{K}}(x)=\{u\in\mathcal{H}\vert \inner{u,y-x}\leq 0 \;\forall y\in\mathcal{K}\}$ for $x\in\mathcal{K}$ and $\NC_{\mathcal{K}}(x)=\emptyset$ otherwise. Under convexity, $\NC_{\mathcal{K}}$ is maximally monotone, and its resolvent coincides with the orthogonal projector $\Pi_{\mathcal{K}}(x)=\argmin\{\norm{y-x}:y\in\mathcal{K}\}$. 

For two sequences $(a_n)\subset \R$ and $(b_n)\subset \R$, we say that $a_n={\mathcal O}(b_n)$ if there exists a constant $C>0$ such that $a_n\le Cb_n$ for $n$ sufficiently large. We say that $a_n=o(b_n)$ if $\lim_{n\to \infty}{b_n}/{a_n}=0$. Finally, we say that $a_n=\Theta(b_n)$ if there exists a constant $C>0$ such that $\lim_{n\to \infty}{b_n}/{a_n}=C$.
%!TEX root = ./main.tex

\section{Outline of the Algorithm}\label{sec:algo}

In this section, we first formulate the main assumptions imposed on Problem \eqref{eq:P}. 

\begin{assumption}\label{ass:1}
We assume the following on Problem \eqref{eq:P}:
\begin{enumerate}
 \item $\opG:\scrH\to\scrH$ is a $\mu$-strongly monotone, with $\mu\geq 0$, and $L_{\opG}$-Lipschitz continuous operator;
 \item $\opM\colon\scrH\to2^{\scrH}$ is maximally monotone with bounded domain;  
 \item The set $\scrS_{0}$ is nonempty. 
 \end{enumerate}
 \end{assumption}
 \begin{remark}
 Assumption \ref{ass:1} guarantees that the overall Problem \eqref{eq:P} has a solution. In particular, $\scrS_{0}$ is a  convex and compact subset of $\scrH$ \cite{BauCom16}. 
 \end{remark} 
% \begin{remark}
%     We emphasize that our assumption includes the case $\mu=0$, in which $\opG$ is only Lipschitz and monotone. 
% \end{remark}

For given parameters $(\alpha,\beta)\in(0,\infty)^{2}$ and an anchor point $w\in\scrH$, we define the associated auxiliary problem as finding
\begin{equation}\label{eq:Aux-P}\tag{Aux}
\bar u_{\beta}(w)\in \Zer(\opM+\beta\opG+\alpha(\Id-w)).
\end{equation}
Here, $\alpha>0$ is a proximal parameter enforcing strong monotonicity, ensuring that the point $\bar u_{\beta}(w)$ is the unique zero of $\opM+\beta\opG+\alpha(\Id-w)$. The parameter $\beta>0$ is a Tikhonov parameter that balances the importance of the lower- and upper-level problems. 

Our numerical scheme consists of two main procedures. The inner procedure is an inertial KM iteration, whose last iterate is designed to lie in a sufficiently small ball around the temporal solution $\bar u_{\beta}(w)$, with iteratively defined radius. We subsequently use this last iterate as a new anchor point, and restart the KM iteration with fresh parameters. To this end, we assume to have access to a family of parameterized fixed-point mappings $\opT_{k}\equiv\opT^{(w,\beta)}_{k}\colon\scrH\to\scrH$, satisfying the following condition.

\begin{assumption}\label{ass:contraction}
For all $k\in \N$ and $(w,\beta)\in\scrH\times\R_{++}$, the operator $\opT^{(w,\beta)}_{k}$ is a quasi-contraction with parameter $q_{k}\in(0,\bar q)$ for some $\bar q\in(0,1)$, sharing a common fixed point $p(w,\beta)$ for all $k\in \N$. Moreover, $\opT_{k}^{(w,\beta)}$ is related to Problem \eqref{eq:Aux-P} via a nonexpansive \textit{fixed-point transportation map} $\opZ^{(w,\beta)}\colon \scrH\to\scrH$ such that $\bar u_{\beta}(w)=\opZ^{(w,\beta)}(p(w,\beta))$.
\end{assumption}
Concrete examples of such fixed-point encoding mappings are provided in Section \ref{sec:splitting}. %Generally, the operators may vary over the inner loop as a result of a varying step-size in the selected method.
\begin{remark}
The proximal regularization parameter $\alpha>0$ is treated in our approach as an external input to the algorithm. Hence, its concrete numerical value used in the computation affects the position of the solution $\bar{u}_{\beta}(w)$, the shape of the fixed-point encoding map $\opT^{(w,\beta)}_{k}$, and eventually also the fixed-point transportation map $\opZ^{(w,\beta)}$. However, since it is a fixed parameter, we simplify the notation and omit an explicit dependence from the involved mappings and operators on the value of $\alpha$. 
\end{remark}

\subsection{Inner Loop}\label{ssec:inner}

The inner loop of our tracking algorithm employs the procedure $\mathtt{KM}(v,(\opT^{(w,\beta)}_{k})_{k},(\tau_{k})_{k},(\theta_{k})_{k},\varepsilon)$, see Algorithm \ref{algo:inner_loop}, which is an inertial Krasnoselskii-Mann iteration with user-provided parameters embodied in terms of the initial point $v\in\scrH$, fixed-point encoding mappings $(\opT^{(w,\beta)}_{k})_{k}$ satisfying Assumption \ref{ass:contraction}, as well as momentum parameters $(\tau_{k})_k$ and relaxation parameters $(\theta_{k})_{k}$. This fixed-point iteration is  designed to return an approximation of the central funnel fixed-point $\bar{u}_{\beta}(w)$ via the fixed-point transportation map $\opZ^{(w,\beta)}$. 

\begin{algorithm}[H]
\caption{Function $\mathtt{KM}(v,(\opT^{(w,\beta)}_{k})_{k},(\tau_{k})_{k},(\theta_{k})_{k},\varepsilon)$}
\label{algo:inner_loop}
\begin{algorithmic}
\State $v_1= v_0\eqdef v$ 
\State Set $k=1$ 
\While{$\|v_{k+1}-z_k\| > \varepsilon$}
 \State $k\leftarrow k+1$
 \State $z_{k}\leftarrow v_{k}+\tau_{k}(v_{k}-v_{k-1})$
 \State $v_{k+1}\leftarrow (1-\theta_{k})z_{k}+\theta_{k}\opT^{(w,\beta)}_{k}(z_{k})$
\EndWhile
\State \textbf{return} $v_{k+1}$
\end{algorithmic}
\end{algorithm}
Algorithm \ref{algo:inner_loop} employs the stopping time
\begin{equation}\label{eq:stopping}
\K(\eps)\eqdef\inf\{k\geq 1\vert\;\norm{v_{k+1}-z_{k}}\leq\eps\}.
\end{equation}
This stopping criterion is motivated by the following approximation result, formulated in terms of general fixed-point encodings obeying the imposed assumptions. 
\begin{lemma}\label{lem:stopping}
Let $(\tau_k)_k\subset [0, 1], (\theta_k)_k\subset [0, \bar\theta]$ for $\bar\theta\in (0, 1)$, $\eps>0$, and let $(\opT_k)_k$ be a sequence of operators satisfying Assumption \ref{ass:contraction}. Denote by $p$ the common fixed-point of $(\opT_k)_k$. If the stopping time $\K(\eps)$ defined in \eqref{eq:stopping} is finite and $v_{\K(\eps)+1}$ is the last iterate produced by Algorithm \ref{algo:inner_loop}, then
\begin{equation}\label{eq:compute_en}
\norm{v_{\K(\eps)+1}-p}\leq\ce\eqdef \frac{\eps}{\bar\theta}\left({1-\bar\theta}+\frac{\bar{q}}{1-\bar{q}}\right).
\end{equation}
\end{lemma}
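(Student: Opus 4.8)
The plan is to control two quantities at the termination index $k = \K(\eps)$: the ``step'' $\norm{v_{k+1} - z_k}$, which is at most $\eps$ by definition of the stopping time, and the distance of $z_k$ to the common fixed point $p$. Writing $N = \K(\eps)$, note that $v_{N+1} = (1-\theta_N) z_N + \theta_N \opT_N(z_N)$, so $v_{N+1} - z_N = \theta_N(\opT_N(z_N) - z_N)$. Since $\K(\eps)$ is finite, $\norm{v_{N+1}-z_N} \leq \eps$, hence $\norm{\opT_N(z_N) - z_N} \leq \eps/\theta_N$. I would prefer to keep $\theta_N$ in the bound as long as possible and only substitute the uniform bound $\bar\theta$ at the very end, being careful about which direction the inequality goes.

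Next I would estimate $\norm{z_N - p}$ using the quasi-contraction property of $\opT_N$ with parameter $q_N \leq \bar q$ and the triangle inequality: since $\opT_N p = p$,
\[
\norm{z_N - p} \leq \norm{z_N - \opT_N(z_N)} + \norm{\opT_N(z_N) - p} \leq \frac{\eps}{\theta_N} + q_N \norm{z_N - p},
\]
which rearranges to $\norm{z_N - p} \leq \frac{1}{1-q_N}\cdot\frac{\eps}{\theta_N} \leq \frac{1}{1-\bar q}\cdot\frac{\eps}{\theta_N}$, using that $t \mapsto \frac{1}{1-t}$ is increasing on $(0,1)$ and $q_N < \bar q$. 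Then I would combine via the final update once more:
\[
\norm{v_{N+1} - p} \leq \norm{v_{N+1} - z_N} + \norm{z_N - p} \leq \eps + \frac{1}{1-\bar q}\cdot\frac{\eps}{\theta_N}.
\]
Alternatively, one can expand $v_{N+1} - p = (1-\theta_N)(z_N - p) + \theta_N(\opT_N(z_N) - p)$ and bound it by $(1-\theta_N)\norm{z_N-p} + \theta_N q_N \norm{z_N - p} = (1 - \theta_N(1-q_N))\norm{z_N-p}$, which is the route that yields exactly the stated constant; I would try this convex-combination expansion first.

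The remaining task is to turn the $\theta_N$-dependent bound into the clean constant $\ce = \frac{\eps}{\bar\theta}\bigl(1 - \bar\theta + \frac{\bar q}{1-\bar q}\bigr)$. Plugging $\norm{z_N - p} \leq \frac{\eps}{\theta_N(1-q_N)}$ into $(1 - \theta_N(1-q_N))\norm{z_N - p}$ gives
\[
\norm{v_{N+1}-p} \leq \frac{\eps(1-\theta_N(1-q_N))}{\theta_N(1-q_N)} = \frac{\eps}{\theta_N}\left(\frac{1}{1-q_N} - \theta_N\right) = \frac{\eps}{\theta_N}\left(1 - \theta_N + \frac{q_N}{1-q_N}\right).
\]
Now the function $\theta \mapsto \frac{1}{\theta}(1 - \theta + c)$ with $c = \frac{q_N}{1-q_N} > 0$ is decreasing in $\theta$, so replacing $\theta_N$ by the smaller... — here is the one genuinely delicate point: $\theta_N \leq \bar\theta$, so $1/\theta_N \geq 1/\bar\theta$ and the bound \emph{increases} when we lower-bound $\theta_N$; we want an \emph{upper} bound, so we must use $\theta_N \leq \bar\theta$ in the direction that makes the right-hand side larger, i.e. substitute $\bar\theta$ for $\theta_N$ to get an upper bound provided we simultaneously check the $-\theta_N$ term goes the right way. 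Concretely, $\frac{1}{\theta_N}(1-\theta_N+c) = \frac{1+c}{\theta_N} - 1 \leq \frac{1+c}{?}$; since $\theta_N$ could be arbitrarily small this is \emph{not} bounded by the $\bar\theta$-expression unless the algorithm additionally guarantees $\theta_N$ bounded below — so I expect the actual argument must use $\theta_N = \bar\theta$ (constant relaxation in the inner loop) or a lower bound on $\theta_k$, and then also bound $\frac{q_N}{1-q_N} \leq \frac{\bar q}{1-\bar q}$ since $t\mapsto t/(1-t)$ is increasing. I would re-read the inner-loop specification for this; assuming $\theta_k \equiv \bar\theta$ (or $\theta_k \geq \bar\theta$ is what is really meant, with $\bar\theta$ a lower rather than upper bound in this lemma), substituting gives exactly $\norm{v_{N+1}-p} \leq \frac{\eps}{\bar\theta}\bigl(1 - \bar\theta + \frac{\bar q}{1-\bar q}\bigr) = \ce$, completing the proof. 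The main obstacle is thus purely this monotonicity-in-$\theta$ bookkeeping and making sure the hypothesis on $(\theta_k)$ is used in the correct direction.
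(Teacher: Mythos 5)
Your argument is the same as the paper's: use the stopping criterion to get $\norm{\opT_\K(z_\K)-z_\K}\le\eps/\theta_\K$, feed that into the quasi-contraction inequality to obtain $\norm{z_\K-p}\le\frac{\eps}{\theta_\K(1-\bar q)}$, and then recombine through the update to arrive at $\norm{v_{\K+1}-p}\le\frac{\eps}{\theta_\K}\bigl(1-\theta_\K+\frac{\bar q}{1-\bar q}\bigr)$; your convex-combination expansion and the paper's triangle-inequality split through $\opT_\K(z_\K)$ are algebraically identical.

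The delicate point you flag at the end is real, and you have diagnosed it correctly. The map $\theta\mapsto\frac{1}{\theta}\bigl(1-\theta+c\bigr)=\frac{1+c}{\theta}-1$ with $c=\frac{\bar q}{1-\bar q}>0$ is \emph{decreasing} in $\theta$, so passing from $\theta_\K$ to $\bar\theta$ yields an upper bound only if $\theta_\K\ge\bar\theta$. The paper's own proof performs exactly this substitution in its last line, so it silently uses $\bar\theta$ as a \emph{lower} bound on the relaxation parameters, whereas the lemma's hypothesis $(\theta_k)_k\subset[0,\bar\theta]$ declares it an upper bound (and, taken literally, even permits $\theta_k=0$, which makes the division by $\theta_k$ meaningless). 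The intended hypothesis is a uniform positive lower bound $\underline\theta\le\theta_k$, with $\ce$ stated in terms of $\underline\theta$; this is consistent with the paper's later Assumption on the combined analysis, which explicitly requires $\theta_{n,k}$ to be uniformly bounded below by a positive quantity, and with the experiments, where $\theta_k$ is constant. So your proof is correct modulo this repaired hypothesis, and you were right not to trust the stated direction of the bound on $(\theta_k)_k$.
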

\begin{proof}
Given $\eps>0$, let $\K=\K(\eps)$. By definition of $v_{k+1}$ and quasi-contractiveness of $\opT_k$, we get
\begin{align*}
\norm{z_k-p}&=\norm{z_k-\opT_{k}(z_k)+\opT_{k}(z_k)-p}\leq\norm{\frac{v_{k+1}-z_k}{\theta_k}}+\norm{\opT_{k}(z_k)-p}\\
&\leq\frac{\eps}{\theta_k}+\bar{q}\cdot  \norm{z_k-p}.
\end{align*}
Whence $\norm{z_k-p}\leq\frac{\eps}{\theta_k(1-\bar{q})}$. We conclude that
\begin{align*}
\norm{v_{\K+1}-p}&=\norm{v_{\K+1}-\opT_{k}(z_\K)+\opT_{k}(z_\K)-p}\\
&\leq\norm{\frac{(1-\theta_\K)(z_\K-v_{\K+1})}{\theta_\K}}+\norm{\opT_{k}(z_\K)-p}\\
&\leq\eps\frac{1-\theta_{\K}}{\theta_\K}+\bar{q}\cdot \norm{z_\K-p}\\
&\leq\frac{\eps}{\theta_\K}\left({1-\theta_{\K}}+\frac{\bar{q}}{1-\bar{q}}\right)\leq \frac{\eps}{\bar\theta}\left({1-\bar\theta}+\frac{\bar{q}}{1-\bar{q}}\right).
\end{align*}
\end{proof}
Lemma \ref{lem:stopping} has important consequences for the tracking properties of the anchor points $w$ which we recursively generate with the restarting procedure explained next. To give an outlook, consider the sequence $(v_{k})_{k}$ produced by the function $\mathtt{KM}(v,(\opT^{(w,\beta)}_{k})_{k},(\tau_{k})_{k},(\theta_{k})_{k},\varepsilon)$, and let $w^{+}\eqdef \opZ^{(w,\beta)}(v_{\K(\eps)+1})$. Lemma \ref{lem:stopping} guarantees that this point satisfies the tracking guarantee
\begin{equation}\label{eq:temporalerror}
\norm{w^{+}-\bar{u}_{\beta}(w)}\leq\ce=\frac{\eps}{\bar\theta}\left({1-\bar\theta}+\frac{\bar{q}}{1-\bar{q}}\right),
\end{equation}
by nonexpansiveness of $\opZ^{(w, \beta)}$. The upper bound is of order $\mathcal{O}(\eps)$, and thus controlled by the user-defined targeted precision. 

\subsection{Restarting Procedure}

Given an anchor point $w$, Algorithm \ref{algo:inner_loop} produces a sequence of points $(v_{k})_{k}$, whose last iterate corresponds to the output of the function $\mathtt{KM}(v,(\opT^{(w,\beta)}_{k})_{k},(\tau_{k})_{k},(\theta_{k})_{k},\varepsilon)$. In view of the tracking property established in \eqref{eq:temporalerror}, we will use this point as the new anchor point for the inner-loop KM iteration. Repeating this over time for a fixed number of restarts with updated parameters required by the inner-loop, defines our outer loop procedure. The mechanics of this procedure gives rise to a diagonal equilibrium tracking method (\texttt{DANTE}), formally described in Algorithm \ref{algo:outer_loop}.

\begin{algorithm}[H]
\caption{\texttt{DANTE} (DiAgoNal equilibrium Tracking mEthod)}
\label{algo:outer_loop}
\begin{algorithmic}
\State $w_{0}\in\scrH$ given initial point. $(\tau_{0,k}),(\theta_{0,k})_{k}$ given parameters and $N\geq 1$ given number of restarts.
\State Set $\lambda_{0}=1$ and $S_{0}=0$.
\State Set $\bar{w}_{0}=w_{0}$.
\For {$n=0, \ldots, N-1$}
   \State Set $v=\mathtt{KM}(w_{n},(\opT^{(w_{n},\beta_{n})}_{n,k})_{k},(\tau_{n,k})_{k},(\theta_{n,k})_{k},\eps_{n})$.
   \State Let $w_{n+1}=\opZ^{(w_{n},\beta_{n})}(v)$ denote the next anchor point.
   \State Update $\lambda_{n+1}=\lambda_{n}(1+\frac{2\mu\beta_{n}}{\alpha})$ and $S_{n+1}=S_{n}+\lambda_{n}\beta_{n}$
   \State Update $$
   \bar{w}_{n+1}=\frac{S_{n}\bar{w}_{n}+\lambda_{n}\beta_{n}w_{n+1}}{S_{n+1}}.
   $$
    \State Compute $\ce_n$ based on Equation \eqref{eq:compute_en}.
    \State Update $\beta_{n+1},(\tau_{n+1,k})_{k},(\theta_{n+1,k})_{k},\eps_{n+1}$. 
\EndFor
\State \textbf{return} {$\bar{w}_{N}$}
\end{algorithmic}
\end{algorithm}

Note that \texttt{DANTE} is a method rather than a bona-fide algorithm as we do not specify how the parameters required by the fixed-point iteration $\mathtt{KM}(v,(\opT^{(w,\beta)}_{k})_{k},(\tau_{k})_{k},(\theta_{k})_{k},\varepsilon)$ are updated. This is a part of the user-defined input and should be decided based on the eventually known problem structure. 
%!TEX root = ./main.tex

\section{Convergence Analysis}\label{S:analysis}

In this section, we focus on the convergence analysis of our proposed method. The double-loop structure naturally splits the analysis into the analysis of the inner loop (Section \ref{S:inner}), the analysis of the restarting procedure (Section \ref{S:conv}), and the analysis of how they interact (Section \ref{ssec:comb}).

\subsection{Convergence Analysis of Inner Loop}\label{S:inner}

Before proving that Algorithm \ref{algo:inner_loop} terminates in finite time, we recall the following assumption from \cite{Cortild:2024aa}. Note that the conditions for Lemma \ref{lem:stopping} are verified under Assumptions \ref{ass:contraction} and \ref{ass:KM}, when considering a single outer iteration. 

\begin{assumption}{\cite[Hypothesis 2.1]{Cortild:2024aa}}\label{ass:KM}
    Assume $(\theta_k)_k\subset(0,\bar\theta]$ and that $(\tau_k)_k\subset [0,\bar\tau]$ is monotonically non-decreasing, where $\bar\theta, \bar\tau\in (0, 1)$. Moreover, 
    \begin{equation}\label{eq:parameterineq}
    Q_k\tau_k(1+\tau_k)+(\theta_k^{-1}-1) \tau_k(1-\tau_k)-Q_k(\theta_{k-1}^{-1}-1)(1-\tau_{k-1})< 0,
    \end{equation}
   for all $k\in \N$, where $Q_k=1-\theta_k(1-q_k^2)$, and that $\bar Q\eqdef \sup_{k\in \N}Q_k<1$. 
\end{assumption}

Under the above assumption, we can guarantee strong convergence of the iterates at a linear rate.

\begin{theorem}{\cite[Theorem 2.2]{Cortild:2024aa}}\label{th:MainInexact}\label{thm:822}
    Assume Assumptions \ref{ass:contraction} and \ref{ass:KM} hold. 
    If $(z_k,v_{k})_{k}$ is generated by Algorithm \ref{algo:inner_loop} without stopping criterion, then $(z_k,v_k)_k$ converges strongly to $(p, p)=(\textnormal{Fix}(\opT_{k}),\textnormal{Fix}(\opT_{k}))$. Moreover 
    $\sum_{k\geq 1}\norm{v_k-p}^{2}<\infty$, and specifically
    \[
        \norm{v_{k}-p}^2\leq\frac{\bar{Q}^{k}}{(1-\bar\tau)(1-\bar\theta)}\cdot \norm{v_{0}-p}^2
    \]
for all $k\ge 1$.
\end{theorem}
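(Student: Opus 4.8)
The statement bundles three conclusions — strong convergence of $(z_k,v_k)$ to $(p,p)$, square–summability of $(\norm{v_k-p})_k$, and the linear rate — but they all follow from the single geometric estimate, so the plan is to prove that estimate first and harvest the rest. By quasi-contractivity each $\opT_k$ has a \emph{unique} fixed point, and Assumption \ref{ass:contraction} identifies these into one common point $p=\Fix(\opT_k)$. Once $\norm{v_k-p}^2\le\frac{\bar Q^{k}}{(1-\bar\tau)(1-\bar\theta)}\norm{v_0-p}^2$ is in hand, square–summability is a convergent geometric series (finite precisely because $\bar Q<1$), $v_k\to p$ is immediate, and $z_k\to p$ follows from $\norm{z_k-p}\le(1+\tau_k)\norm{v_k-p}+\tau_k\norm{v_{k-1}-p}$ together with $\tau_k\le\bar\tau<1$.

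\noindent For the geometric estimate I would derive a one-step energy inequality. Set $\delta_k\eqdef\theta_k^{-1}-1\ge 0$. Applying the elementary identity $\norm{\lambda a+(1-\lambda)b}^2=\lambda\norm{a}^2+(1-\lambda)\norm{b}^2-\lambda(1-\lambda)\norm{a-b}^2$ (valid for all $\lambda\in\R$) first to $z_k=v_k+\tau_k(v_k-v_{k-1})$ and then to $v_{k+1}=(1-\theta_k)z_k+\theta_k\opT_k(z_k)$, using $\norm{\opT_k(z_k)-p}\le q_k\norm{z_k-p}$ and $\opT_k(z_k)-z_k=\theta_k^{-1}(v_{k+1}-z_k)$, gives
\[
\norm{v_{k+1}-p}^2+\delta_k\norm{v_{k+1}-z_k}^2\le Q_k\norm{z_k-p}^2,\qquad Q_k=1-\theta_k(1-q_k^2),
\]
together with $\norm{z_k-p}^2=(1+\tau_k)\norm{v_k-p}^2-\tau_k\norm{v_{k-1}-p}^2+\tau_k(1+\tau_k)\norm{v_k-v_{k-1}}^2$. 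Substituting $v_{k+1}-z_k=(v_{k+1}-v_k)-\tau_k(v_k-v_{k-1})$ and lower-bounding $\norm{v_{k+1}-z_k}^2$ by $(1-\tau_k)\norm{v_{k+1}-v_k}^2-\tau_k(1-\tau_k)\norm{v_k-v_{k-1}}^2$ via a Young inequality with weight $1-\tau_k$, one arrives at a recursion of the form
\[
\norm{v_{k+1}-p}^2+\delta_k(1-\tau_k)\norm{v_{k+1}-v_k}^2+Q_k\tau_k\norm{v_{k-1}-p}^2\le Q_k(1+\tau_k)\norm{v_k-p}^2+B_k\norm{v_k-v_{k-1}}^2,
\]
with $B_k=Q_k\tau_k(1+\tau_k)+\delta_k\tau_k(1-\tau_k)$ — exactly the quantity appearing on the left of \eqref{eq:parameterineq}.

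\noindent The core of the argument is to telescope this through a Lyapunov functional of the form $\Psi_k\eqdef\norm{v_k-p}^2-\tau_{k-1}\norm{v_{k-1}-p}^2+\gamma_k\norm{v_k-v_{k-1}}^2$. Matching the $\norm{v_k-v_{k-1}}^2$-terms between two consecutive instances of the recursion forces $\gamma_k$ to lie in the interval $[\,B_k/Q_k,\ \delta_{k-1}(1-\tau_{k-1})\,]$, and a (necessarily nonnegative) choice of $\gamma_k$ in this interval exists \emph{precisely} when \eqref{eq:parameterineq} holds — this is the content of that otherwise opaque parameter condition. The negative term $-\tau_{k-1}\norm{v_{k-1}-p}^2$ is what absorbs the $(1+\tau_k)$ factor in front of $\norm{v_k-p}^2$, and here monotonicity of $(\tau_k)$ is used to make the $\norm{v_{k-1}-p}^2$-terms telescope with the right sign; $Q_k\le 1$ handles the $\norm{v_k-p}^2$-terms. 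This yields $\Psi_{k+1}\le Q_k\Psi_k\le\bar Q\,\Psi_k$, hence $\Psi_k\le\bar Q^{\,k-1}\Psi_1$ with $\Psi_1=(1-\tau_0)\norm{v_0-p}^2$ (since $v_1=v_0$). Finally, from $\Psi_k\ge\norm{v_k-p}^2-\bar\tau\norm{v_{k-1}-p}^2$ one unrolls the resulting scalar recursion for $(\norm{v_k-p}^2)_k$ into a geometric bound; a careful accounting of the constants (where the factors $1-\bar\tau$ and $1-\bar\theta$ enter) gives the stated estimate, and hence the theorem.

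\noindent The main obstacle is the Lyapunov step: one must single out the coefficient sequence $(\gamma_k)$ for which the telescoping closes, verify that the nonemptiness of the interval $[B_k/Q_k,\ \delta_{k-1}(1-\tau_{k-1})]$ is \emph{equivalent} to condition \eqref{eq:parameterineq}, and track the constants through the final unrolling so that the prefactor comes out as $\bigl((1-\bar\tau)(1-\bar\theta)\bigr)^{-1}$ rather than something cruder. The remaining ingredients — the two norm identities, the quasi-contraction bound, the geometric summation, and the deduction of strong convergence and summability — are routine.
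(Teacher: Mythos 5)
First, a caveat on the comparison itself: the paper does not prove Theorem \ref{thm:822} --- it is imported verbatim from \cite[Theorem 2.2]{Cortild:2024aa} and stated without proof --- so there is no internal argument to measure your reconstruction against, and I can only assess it on its own terms. On those terms, your skeleton is the right one and the intermediate estimates are correct: the identity $\norm{\lambda a+(1-\lambda)b}^2=\lambda\norm{a}^2+(1-\lambda)\norm{b}^2-\lambda(1-\lambda)\norm{a-b}^2$ applied to both update steps, the quasi-contraction bound, and the Young step do yield the recursion you display, $B_k$ is indeed the first two terms of \eqref{eq:parameterineq}, and your reading of \eqref{eq:parameterineq} as the nonemptiness of the admissible interval $[B_k/Q_k,\ \delta_{k-1}(1-\tau_{k-1})]$ for $\gamma_k$ is exactly the right way to understand that condition; the deduction $\Psi_{k+1}\le \bar Q\,\Psi_k$ and hence $\Psi_k\le \bar Q^{\,k-1}(1-\tau_0)\norm{v_0-p}^2$ then goes through using $Q_k\le 1$ and the monotonicity of $(\tau_k)$, as you say.

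The genuine gap is the final step, which you defer to ``careful accounting'' but which does not close along the route you describe. Writing $a_k=\norm{v_k-p}^2$, the minorization $\Psi_k\ge a_k-\bar\tau a_{k-1}$ gives $a_k\le\bar\tau a_{k-1}+\bar Q^{\,k-1}\Psi_1$, whose unrolled form is $a_k\le\bar\tau^k a_0+\Psi_1\sum_{j=0}^{k-1}\bar\tau^j\bar Q^{\,k-1-j}$. This is $O(\bar Q^k)$ only if $\bar\tau\le\bar Q$, which is not an automatic consequence of Assumption \ref{ass:KM} when the parameters vary with $k$ (at constant parameters one checks that $\tau=Q$ violates \eqref{eq:parameterineq}, but if $\theta_{k-1}$ is small then $\delta_{k-1}$ is large and \eqref{eq:parameterineq} can hold with $\tau_k>Q_k$); and even granting $\bar\tau<\bar Q$, the geometric sum produces a prefactor of order $(\bar Q-\bar\tau)^{-1}$, which need not be dominated by $\bigl((1-\bar\tau)(1-\bar\theta)\bigr)^{-1}$ --- for instance $\theta\equiv 0.9$, $q\equiv 0.1$ gives $\bar Q\approx 0.109$, $\bar\tau\approx 0.053$, hence $(\bar Q-\bar\tau)^{-1}\approx 18$ versus $\bigl((1-\bar\tau)(1-\bar\theta)\bigr)^{-1}\approx 10.6$. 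So as written your argument delivers a geometric rate in $\bar Q$ (under an extra, unverified relation between $\bar\tau$ and $\bar Q$) but not the theorem's stated estimate with its specific constant; landing on that constant requires either a different minorization of $\Psi_k$ (one controlling $a_k$ directly rather than $a_k-\bar\tau a_{k-1}$) or a differently weighted Lyapunov functional. The remaining conclusions (square-summability, $v_k\to p$, $z_k\to p$) would indeed follow from the estimate once it is established, as you note.
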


An immediate corollary is that the stopping criterion in Algorithm \ref{algo:inner_loop} allows to conclude finite-time termination.
\begin{corollary}\label{coro:conv}
    Assume Assumptions \ref{ass:contraction} and \ref{ass:KM} hold. It holds that $\sum_{k\ge 1}\|v_{k+1}-z_k\|^2<\infty$.
    Specifically, $\|v_{k+1}-z_k\|\to 0$, and hence Algorithm \ref{algo:inner_loop} terminates in finite time. Moreover, it holds that
    \[
        \norm{v_{k+1}-z_k}^2\le \bar Q^{k-1}\cdot \frac{4(1+\bar\tau)^2}{(1-\bar\tau)(1-\bar\theta)}\cdot \norm{v_0-p}^2
    \]
    for all $k\ge 2$.
\end{corollary}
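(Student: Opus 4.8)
The plan is to deduce the corollary entirely from Theorem~\ref{th:MainInexact}, which already furnishes geometric decay of $\norm{v_k-p}^2$ for the (non-truncated) inner iteration. The first step is to rewrite the stopping residual $v_{k+1}-z_k$ in terms of the distances to the common fixed point $p$. Since $z_k=v_k+\tau_k(v_k-v_{k-1})$, two applications of the triangle inequality together with $\tau_k\le\bar\tau$ give, for every $k\ge 2$,
\[
\norm{v_{k+1}-z_k}\le\norm{v_{k+1}-p}+\norm{z_k-p}\le\norm{v_{k+1}-p}+(1+\bar\tau)\norm{v_k-p}+\bar\tau\norm{v_{k-1}-p}.
\]
All three indices $k+1,k,k-1$ are $\ge 1$ when $k\ge 2$, so the decay estimate of Theorem~\ref{th:MainInexact} applies directly to each term.

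Next I would insert the bound $\norm{v_j-p}\le \bar{Q}^{j/2}\,C$ with $C\eqdef\norm{v_0-p}\big/\sqrt{(1-\bar\tau)(1-\bar\theta)}$. Using $\bar{Q}\in(0,1)$ to estimate $\bar{Q}^{(k+1)/2}\le\bar{Q}^{(k-1)/2}$ and $\bar{Q}^{k/2}\le\bar{Q}^{(k-1)/2}$, the three terms collapse into
\[
\norm{v_{k+1}-z_k}\le C\,\bar{Q}^{(k-1)/2}\bigl(1+(1+\bar\tau)+\bar\tau\bigr)=2(1+\bar\tau)\,C\,\bar{Q}^{(k-1)/2},
\]
and squaring gives precisely the asserted estimate
\[
\norm{v_{k+1}-z_k}^2\le\bar{Q}^{k-1}\cdot\frac{4(1+\bar\tau)^2}{(1-\bar\tau)(1-\bar\theta)}\cdot\norm{v_0-p}^2\qquad(k\ge 2).
\]

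Finally, since $\bar{Q}<1$, the right-hand side is the general term of a convergent geometric series, which yields $\sum_{k\ge 1}\norm{v_{k+1}-z_k}^2<\infty$ and in particular $\norm{v_{k+1}-z_k}\to 0$; hence for any $\eps>0$ the stopping index $\K(\eps)$ of \eqref{eq:stopping} is finite, so Algorithm~\ref{algo:inner_loop} terminates in finite time. I do not expect a genuine obstacle here: the result is a short consequence of Theorem~\ref{th:MainInexact}, and the only points needing care are the index bookkeeping — the decay estimate holds from $k\ge 1$, which is exactly why the explicit bound can only be claimed from $k\ge 2$ on — and the elementary collapsing of the three powers of $\bar{Q}$ into a single one with the clean constant $2(1+\bar\tau)$.
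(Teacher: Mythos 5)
Your argument is correct and follows essentially the same route as the paper: both decompose $\norm{v_{k+1}-z_k}$ via the triangle inequality into $\norm{v_{k+1}-p}+(1+\tau_k)\norm{v_k-p}+\tau_k\norm{v_{k-1}-p}$ and then invoke the geometric decay and square-summability from Theorem~\ref{th:MainInexact}. The only cosmetic difference is that the paper bounds the three terms by $2(1+\bar\tau)$ times their maximum before inserting the rate, whereas you insert the rate term by term and collapse the powers of $\bar{Q}$; the resulting constant is identical.
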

\begin{proof}
    Note that 
    \begin{align*}
        \|v_{k+1}-z_k\|
        &\le \|v_{k+1}-p\|+\|z_k-p\| \\
        &\le \|v_{k+1}-p\|+(1+\tau_k)\|v_k-p\|+\tau_k\|v_{k-1}-p\| \\
        &\le 2(1+\bar \tau)\max(\|v_{k+1}-p\|, \|v_{k}-p\|, \|v^{k-1}-p\|).
    \end{align*}
    The result follows by the square-summability of $(v_k-p)_{k}$, and the rate provided in Theorem \ref{th:MainInexact}.
\end{proof}

The rate in Corollary \ref{coro:conv} does not provide a user-friendly quantitative result unless one can control the quantity $\|v_0-p\|$. We present an additional set of assumptions under which this can be done.

\begin{assumption}\label{ass:add}
    We assume that $\dom(\opT^{(w,\beta)}_k)\subset \dom(\opM)$ for all $k\in \N$ and all $(w,\beta)$. 
\end{assumption}

\begin{remark}
    Assumption \ref{ass:add} might seem restrictive, but in fact, they are satisfied by many operators. As a concrete example, assume that we know that the lower level is formulated in terms of the operator $\opM=\opF+\NC_{\mathcal K}$, where $\NC_{\mathcal K}$ is the normal cone of the closed convex set ${\mathcal K}=\dom(\opM)$ and $\opF:\mathcal{H}\to\mathcal{H}$ is a Lipschitz continuous monotone operator. With this problem structure, a powerful fixed-point encoding strategy is the forward-backward operator. Adapted to the auxiliary problem \eqref{eq:Aux-P}, this method is defined in terms of the mapping 
    \[
    \opT^{(w,\beta)}_{k}(x)=\Pi_{\mathcal{K}}(x-\gamma_{k}(\opF(x)+\beta\opG(x)+\alpha(x-w))), 
    \]
    $\gamma_{k}>0$ being a suitably chosen step-size parameter. Clearly, $\opT^{(w,\beta)}_{k}(x)\in {\mathcal K}$ for all $x\in \scrH$. The forward-backward operator and other examples are discussed in detail in Section \ref{sec:splitting}.
\end{remark}

Under these assumptions, we can achieve a user-friendly rate, as presented in the following corollary.

\begin{corollary}\label{coro:complexity}
    Let Assumptions \ref{ass:contraction}-\ref{ass:add} be in place. Then, 
    \[
        \|v_{k+1}-z_k\|\le C\cdot \bar Q^{k/2},
    \]
    for all $k\ge 1$, where 
    \begin{equation}\label{eq:C}
        C\eqdef \frac{2(1+\bar\tau)}{\sqrt{\bar Q(1-\bar\tau)(1-\bar\theta)}}\cdot \diam(\dom(\opM)).
    \end{equation}
    Specifically, 
    \begin{equation}\label{eq:Kn}
    \K(\eps)\leq \bar{K}_{\eps}\eqdef  \left\lceil2\log(C/\varepsilon)/\log(1/\bar Q)\right\rceil.
    \end{equation}
\end{corollary}
\begin{proof}
    The conditions guarantee that all iterates are in $\dom(\opM)$, thus justifying the bounds. Moreover, since $\bar{Q}\in(0,1)$, $C\cdot \bar Q^{k/2}\leq\varepsilon$ implies the bound on $k$.
\end{proof}

\begin{remark}\label{rem:nonexpansive}
    By \cite[Theorem 2.1]{Cortild:2024aa}, we also know that weak convergence of the iterates $(v_k)$ and the velocities $(z_k)$ is guaranteed when $\opT_{k}$ is just nonexpansive. Moreover, if $\opT_k$ is a fixed-point encoding of Problem \eqref{eq:Aux-P}, which has a unique solution $p$ by construction, we know that the sequence convergences weakly to this unique fixed-point. Specifically, in the finite-dimensional setting, we know that $\|v_k-p\|\to 0$ and $\|v_{k+1}-z_k\|\to 0$, meaning that Algorithm \ref{algo:outer_loop} still can be applied. However, Lemma \ref{lem:stopping} no longer holds, and we do not have an appropriate stopping criterion to guarantee $\|w^+-\bar u_\beta(w)\|\le \ce$. Within our final numerical implementation in Section \ref{sec:image}, we experimentally test our algorithm on problems displaying only nonexpansive operators, corresponding to $q=1$, noting that the theoretical guarantees are weakened in this setting.
\end{remark}

\begin{remark}
    Rather than assuming exact access to the operators $\opT_k$, we could assume access to $\delta_k$-perturbations $\tilde\opT_k$, with bounded inexactness model $0\leq\delta_{k}\leq \delta_{\max}$. Such an error structure is motivated by the fact that the exact map might in general not be available, or its computation may be very demanding. This occurs for instance when applying proximal methods to image deblurring with total variation \cite{Chambolle:2004aa}, or to structured sparsity regularization problems in machine learning and inverse problems \cite{Zhao:2009aa}. In those cases, the proximity operator is usually computed using ad hoc algorithms, and therefore inexactly. The implications of inexact implementations are the following: Lemma \ref{lem:stopping} would still hold, by replacing $\varepsilon$ by $\max(\varepsilon, \delta_{\max})$, provided $\delta_{\max}$ is finite. Additionally, Theorem \ref{thm:822} would still hold \citep{Cortild:2024aa}, provided $(\theta_k\delta_k)\in \ell^2$, but without an explicit rate. As such, Corollary \ref{coro:conv} still holds, but again without rate guarantees. Specifically, we have no a-priori upper bound on the required number of iterations required for Algorithm \ref{algo:inner_loop} to terminate, but we know it will terminate.
\end{remark}

\subsubsection{Parameters Analysis}

We want to take a closer look at the numerical parameters that can be chosen in the inner loop. We restrict ourselves to the case of constant parameters, namely $\tau_{k}\equiv\tau\in(0,1)$, $\theta_{k}\equiv\theta\in(0,1)$, and $q_k\equiv q\in (0,1)$, implied by $\opT_k\equiv \opT$. In this case, Inequality \eqref{eq:parameterineq} in Assumption \ref{ass:KM} becomes 
\[
-\tau^2\cdot (1-2\theta+\theta^2(1-q^2))+\tau\cdot (2-(2-q^2)\theta)-(1-(1-q^2)\theta)(1-\theta)<0.
\]
As such, for a fixed $(\theta, q)$, the admissible values of $\tau$ are $\tau\in [0, \bar \tau)$, where $\bar\tau$ is the largest root of the given quadratic. In Figure \ref{fig:PA}, we plot $\bar\tau$ for different values of $q$.
\begin{figure}[H]
    \centering
    \includegraphics[width=0.48\linewidth]{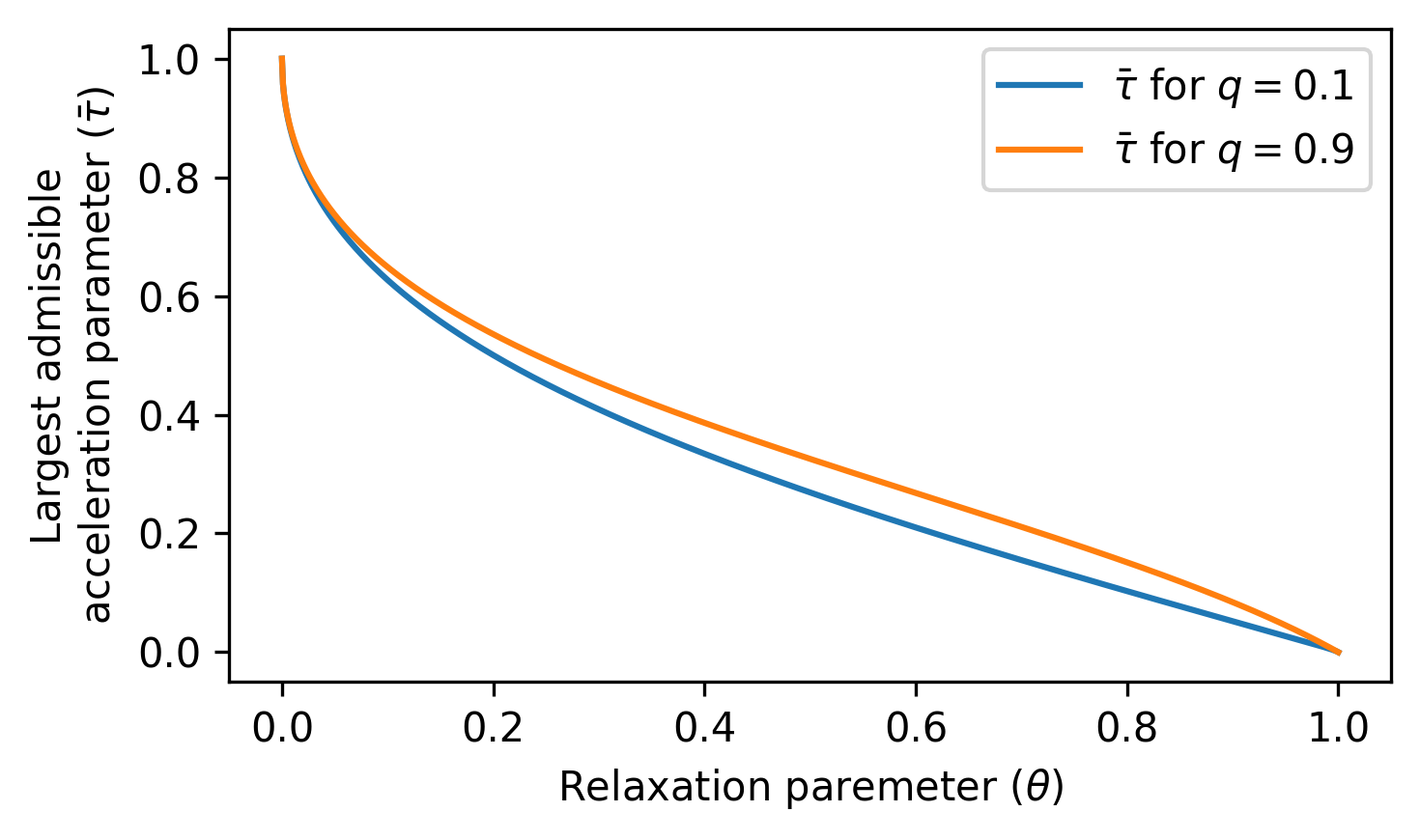}
    \includegraphics[width=0.48\linewidth]{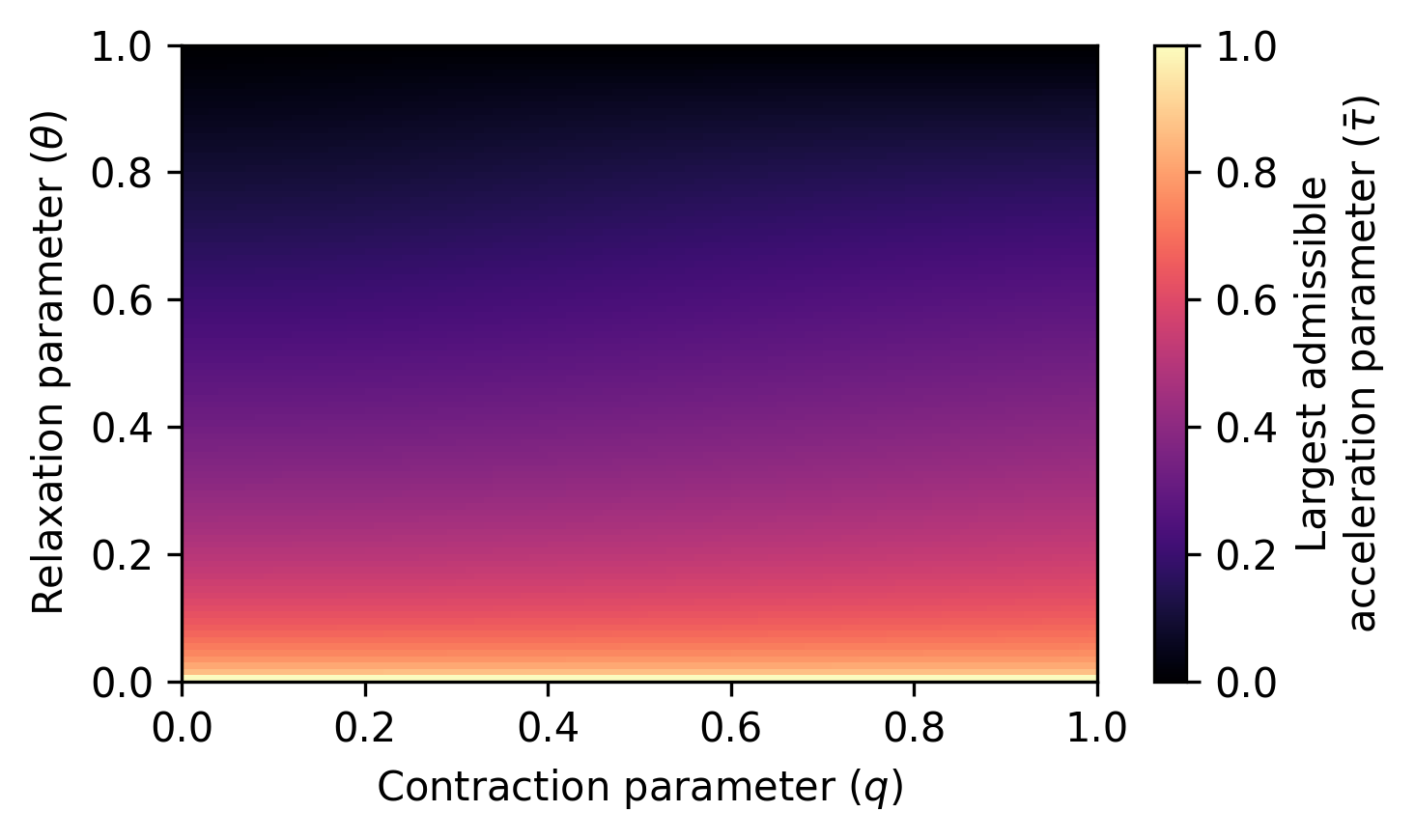}
    \caption{Largest acceleration parameter $\bar\tau$ as a function of the relaxation and the contraction parameters $\theta$ and $q$.}
    \label{fig:PA}
\end{figure}

\subsection{Convergence Analysis of Restarting Procedure}\label{S:conv}

%In the following, we assume that the operator $\opG$ is $\mu$-strongly monotone with $\mu \ge 0$. The case in which $\opG$ is merely monotone is recovered by setting $\mu = 0$ throughout the subsequent analysis.
For our complexity analysis, we introduce two gap functions, one associated with the solution set $\mathcal{S}_{0}$, and one with feasibility $\Zer(\opM)$.
\begin{definition}
The \textit{optimality gap} is defined as
\begin{equation}\label{eq:GapOpt}
    \GapOpt(u)\eqdef \sup_{v\in \scrS_{0}}\inner{ \opG v, u-v}.
\end{equation}
\end{definition}
We observe that if $u$ is feasible, namely if $u\in\scrS_{0}=\Zer(\opM)$, then the condition $\GapOpt(u)\le 0$ is equivalent to $u\in\Zer(\opG+\NC_{\scrS_{0}})$, such that $u$ solves Problem \eqref{eq:P}. It is important to note, however, that $\GapOpt(u)\le 0$ provides limited information when $u\notin \Zer(\opM)$. It is thus important to obtain a lower bound on the optimality gap. 
The feasibility gap is motivated by the observation that, by maximal monotonicity of $\opM$, for any $x\in \scrH$,
\begin{equation*}
    x\in \scrS_{0}=\Zer(\opM)\Leftrightarrow \inf_{y\in \dom(\opM)}\inf_{v\in \opM y}\inner{ 0-v, x-y} \geq 0.
\end{equation*}
\begin{definition}
The \emph{feasibility gap} $\GapFeas:\mathcal{H}\to \R$ is defined as
\begin{equation}\label{eq:GapFeas}
    \GapFeas(x)=\sup_{y\in \dom(\opM), v\in \opM y}\inner{ v, x-y}.
\end{equation}
\end{definition}
We note that $\GapFeas(x)\le 0$ implies $x\in\Zer(\opM)$, and that $\GapFeas(x)\ge 0$ holds for all $x\in \mathcal{H}$ since, by assumption, the problem admits at least one solution; in particular, $\Zer(\opM)\neq\emptyset$. Since $\GapOpt$ and $\GapFeas$ are pointwise suprema of affine functions, both are weakly lower semicontinuous and convex.

Our convergence rates in terms of the optimality gap require a lower bound on the a priori signless optimality gap function $\GapOpt$. We derive such a lower bound within a specific geometric setting, imposing a Hölderian error bound on the feasibility gap. 
\begin{definition}\label{def:WS}
We say that the zero set $\scrS_{0}=\Zer(\opM)$ is \textit{weakly sharp} with constant $\kappa>0$ and order $\rho\geq 1$ if, for all $u\in \dom(\opM)$,
\begin{equation}\label{eq:WS}
    \kappa\cdot \dist(u, \zer(\opM))^{\rho}\le \GapFeas(u).
\end{equation}
\end{definition}

    The notion of weak sharpness of zero sets is closely related to the classical concept of weak sharpness for variational inequalities \cite{marcotte_weak_1998}. Geometric conditions ensuring the error-bound characterization \eqref{eq:WS} can be found in \cite{Al-Homidan:2017aa}. The recent papers \cite{samadi_improved_2025} and \cite{Alvarez:2001aa} rely also heavily on this assumption. An important class of examples arises when $\opM$ is single-valued and $\rho=1$, notably in monotone linear complementarity problems in finite dimensions under nondegeneracy conditions \cite{burke_weak_1993,pang1997error}. 
    \begin{remark}
    In the case where the lower-level problem is determined by a convex optimization problem, weak-sharpness implies an Hölderian errror bound, an assumption already imposed by \cite{Cabot2005} in the context of hierarchical minimization. Specifically, let us assume that $\opM=\partial f$ the subgradient of a proper convex and lower semi-continuous function $f:\scrH\to\R\cup\{+\infty\}$. Then, 
    \[
    \sup_{v\in\partial f(y)}\inner{v,u-y}=f'(y;u-y)\leq f(u)-f(y),
    \]
    so that $\GapFeas(u)\leq f(u)-\min f$. It follows that Inequality \eqref{eq:WS} implies 
    \[
    \kappa\dist(u,\argmin f)^{\rho}\leq f(u)-\min f\qquad \forall u\in\scrH.
    \]
If $\rho=1$, this is the weak-sharpness condition of \cite{burke_weak_1993}. The case $\rho=2$ corresponds to the "quadratic growth" condition of \cite{drusvyatskiy2018error}. Rates of convergence under this Hölderian error bound assumption have recently been derived in \cite{boct2025accelerating}. 
\end{remark}
We next define the following quantities, which are finite because $\dom(\opM)$ is assumed to be bounded:
\begin{align*}
    D_\opM=\diam(\dom(\opM)), \qquad\qquad & C_\opM=\sup_{x\in \dom(\opM)}\sup_{v\in \opM x}\|v\|, \quad & C_\opG=\sup_{x\in \dom(\opM)}\opG(x).
\end{align*}
We also give more explicit expressions for the averaging coefficients $(\lambda_n)_{n}$ and the corresponding \textit{averaged iterates} which are produced by \texttt{DANTE}. They read as
\[
\lambda_{0}= 1,\text{ and for all }n\geq 1,\;\lambda_n=\prod_{i=0}^{n-1}\left(1+\frac{2\mu\beta_i}{\alpha}\right),\;\overline w_{N}=\frac{\sum_{n=0}^{N-1}\lambda_n\beta_n w_{n+1}}{\sum_{n=0}^{N-1}\lambda_n\beta_n}.
\]
In order to guarantee our convergence results, we must make some weak assumptions on the sequence of parameters.

\begin{assumption}\label{ass:params}
The parameter sequences satisfy the following conditions:
\begin{enumerate}

\item $\beta_{n}\in[0,\beta_{\max}]$ is non-increasing and $\lim_{n\to\infty}\beta_{n}=0$.
   \item The sequence $(\lambda_{n}\beta_{n})_{n\geq 0}$ satisfies $\sum_{n\geq 0}\lambda_{n}\beta_{n}=\infty$, and 
   \begin{equation}
\frac{\sum_{n=0}^{N-1}\lambda_{n}\beta^{2}_{n}}{\sum_{n=0}^{N-1}\lambda_{n}\beta_{n}}\to 0\text{ as }N\to\infty.       
   \end{equation}
    \item The sequence $(\ce_n)_{n\geq 0}$ satisfies $\ce_n=o(\beta_n)$. We define $\ce_{\max}=\max_{n\ge 1}\ce_n$. 
\end{enumerate}
\end{assumption}
Note that since $\ce_{n}=O(\eps_{n})$, Assumption \ref{ass:params} implies $\eps_{n}/\beta_{n}\to 0$ as $n\to\infty.$ This means that when designing concrete parameters for implementing our algorithm, we can tailor the inner-loop tolerance $\eps_{n}$ to the evolution of the Tikhonov parameter $\beta_{n}\to 0$. 

We are now ready to state the proposition that is central to our analysis.

\begin{proposition}\label{lemma:main}
For any $(x,v)\in\gr(\opM)$, it holds that 
\begin{equation}\label{eq:energy}
\lambda_n\langle v, w_{n+1}-x\rangle+\lambda_n\beta_n \langle \opG(x), w_{n+1}-x\rangle\le -\frac{\alpha\lambda_{n+1}}{2}\|w_{n+1}-x\|^2+\frac{\alpha\lambda_{n}}{2}\|w_{n}-x\|^2+C_1\lambda_n \ce_n,
\end{equation}
where 
\begin{equation}\label{eq:red:C1}
C_1\eqdef 2\alpha\left(1+\frac{\mu\beta_{\max}}\alpha\right)(\ce_{\max}+D_\opM)+\frac{C_\opM}{\alpha}+\frac{\beta_{\max}C_\opG}{\alpha}.
\end{equation}
\end{proposition}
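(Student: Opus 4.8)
The plan is to reduce the estimate to one involving the \emph{exact} temporal target and then pay an $O(\ce_n)$ price to pass to $w_{n+1}$. Write $\bar{u}_n\eqdef\bar{u}_{\beta_n}(w_n)$ for the unique point of $\Zer(\opM+\beta_n\opG+\alpha(\Id-w_n))$; by the tracking guarantee \eqref{eq:temporalerror} we have $\norm{w_{n+1}-\bar{u}_n}\le\ce_n$. The defining inclusion supplies a selection $\xi_n\in\opM\bar{u}_n$ with $\xi_n+\beta_n\opG\bar{u}_n=\alpha(w_n-\bar{u}_n)$, and since $\opG$ and $\Id-w_n$ are everywhere defined, $\bar{u}_n\in\dom(\opM)$; hence $\norm{\xi_n}\le C_\opM$, $\norm{\opG\bar{u}_n}\le C_\opG$, $\norm{w_n-\bar{u}_n}\le(C_\opM+\beta_{\max}C_\opG)/\alpha$, and $\norm{\bar{u}_n-x}\le D_\opM$ for every $x\in\dom(\opM)$.

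Fix $x\in\dom(\opM)$ and $v\in\opM x$. Monotonicity of $\opM$ gives $\langle v,\bar{u}_n-x\rangle\le\langle\xi_n,\bar{u}_n-x\rangle$, while $\mu$-strong monotonicity of $\opG$ gives $\langle\opG x,\bar{u}_n-x\rangle\le\langle\opG\bar{u}_n,\bar{u}_n-x\rangle-\mu\norm{\bar{u}_n-x}^2$. Adding the first inequality to $\beta_n$ times the second and substituting $\xi_n+\beta_n\opG\bar{u}_n=\alpha(w_n-\bar{u}_n)$ yields
\[
\langle v,\bar{u}_n-x\rangle+\beta_n\langle\opG x,\bar{u}_n-x\rangle\le\alpha\langle w_n-\bar{u}_n,\bar{u}_n-x\rangle-\mu\beta_n\norm{\bar{u}_n-x}^2.
\]
Using $\langle a-b,b-c\rangle=\tfrac{1}{2}(\norm{a-c}^2-\norm{a-b}^2-\norm{b-c}^2)$ with $(a,b,c)=(w_n,\bar{u}_n,x)$ and discarding the nonpositive $-\tfrac{\alpha}{2}\norm{w_n-\bar{u}_n}^2$ bounds the right-hand side by $\tfrac{\alpha}{2}\norm{w_n-x}^2-\bigl(\tfrac{\alpha}{2}+\mu\beta_n\bigr)\norm{\bar{u}_n-x}^2$. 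The crucial algebraic point is that $\tfrac{\alpha}{2}+\mu\beta_n=\tfrac{\alpha}{2}\bigl(1+\tfrac{2\mu\beta_n}{\alpha}\bigr)=\tfrac{\alpha\lambda_{n+1}}{2\lambda_n}$, so after multiplication by $\lambda_n$ these two squared-norm terms become $\tfrac{\alpha\lambda_n}{2}\norm{w_n-x}^2-\tfrac{\alpha\lambda_{n+1}}{2}\norm{\bar{u}_n-x}^2$, exactly the telescoping shape wanted in \eqref{eq:energy}.

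It remains to substitute $w_{n+1}$ for $\bar{u}_n$ at the cost of $\norm{w_{n+1}-\bar{u}_n}\le\ce_n$. On the left, $\lvert\langle v+\beta_n\opG x,\,w_{n+1}-\bar{u}_n\rangle\rvert\le(C_\opM+\beta_{\max}C_\opG)\ce_n$; on the right, $\norm{\bar{u}_n-x}^2\ge\norm{w_{n+1}-x}^2-2\ce_n\norm{w_{n+1}-x}$, and with $\norm{w_{n+1}-x}\le D_\opM+\ce_{\max}$ and $\lambda_{n+1}/\lambda_n\le 1+2\mu\beta_{\max}/\alpha$,
\[
-\tfrac{\alpha\lambda_{n+1}}{2\lambda_n}\norm{\bar{u}_n-x}^2\le-\tfrac{\alpha\lambda_{n+1}}{2\lambda_n}\norm{w_{n+1}-x}^2+(\alpha+2\mu\beta_{\max})(D_\opM+\ce_{\max})\ce_n.
\]
Multiplying the assembled inequality by $\lambda_n$ and gathering the $O(\ce_n)$ remainders delivers \eqref{eq:energy} with a constant of the form \eqref{eq:red:C1}. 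I expect the only real work to lie in this last bookkeeping: deciding which of $D_\opM$, $C_\opM$, $C_\opG$, $\ce_{\max}$ or $(C_\opM+\beta_{\max}C_\opG)/\alpha$ controls each cross-term so as to land on precisely the constant $C_1$ (bounding $\beta_n\le\beta_{\max}$ throughout), and being careful to carry out the passage from $\bar{u}_n$ to $w_{n+1}$ on the inner-product term as well as on the squared-norm term before multiplying by $\lambda_n$. There is no conceptual obstacle beyond this, since the monotone-operator core occupies only a few lines.
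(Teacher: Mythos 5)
Your proof is correct and follows essentially the same route as the paper's: both exploit the defining inclusion for $\bar u_{\beta_n}(w_n)$ together with monotonicity of $\opM$, $\mu$-strong monotonicity of $\opG$, and the three-point (polarization) identity to produce the telescoping $\lambda$-weighted squared norms, and then pay an $O(\ce_n)$ price to replace the temporal target by $w_{n+1}$ on both sides. The constant you obtain, $(\alpha+2\mu\beta_{\max})(D_\opM+\ce_{\max})+C_\opM+\beta_{\max}C_\opG$, is in fact slightly sharper in its first term than the one the paper's own computation produces and, like the paper's, does not literally match \eqref{eq:red:C1} (whose terms $C_\opM/\alpha$ and $\beta_{\max}C_\opG/\alpha$ appear to carry a spurious factor $1/\alpha$), so this is a typo in the stated constant rather than a gap in your argument.
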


\begin{proof}
First, expand
\begin{align*}
\frac12\|w_{n+1}-x\|^2-\frac12\|w_n-x\|^2
&= -\frac12\|w_{n+1}-w_n\|^2+\langle w_{n+1}-w_n, w_{n+1}-x\rangle \\
&\le \langle w_n-\bar u_{n+1}, x-\bar u_{n+1}\rangle + \langle w_{n+1}-w_n, w_{n+1}-\bar u_{n+1}\rangle \\
&\quad +\langle \bar u_{n+1}-w_{n+1}, x-\bar u_{n+1}\rangle.
\end{align*}
For the last two terms, we obtain the bounds
\begin{align*}
    \inner{w_{n+1}-w_n,w_{n+1}-\bar{u}_{n+1}}&\leq \left(\norm{w_{n+1}-\bar{u}_{n+1}}+\norm{\bar{u}_{n+1}-\bar{u}_n}+\norm{\bar{u}_n-w_n}\right)\cdot\norm{w_{n+1}-\bar{u}_{n+1}}\\
    &\leq (\ce_n+D_\opM+\ce_{n-1})\ce_n\\
    &\leq(D_{\opM}+2\ce_{max})\ce_n,
\end{align*}
and
\begin{equation*}
    \inner{\bar{u}_{n+1}-w_{n+1},\bar{u}_{n+1}-x}\leq\norm{\bar{u}_{n+1}-w_{n+1}}\cdot\norm{\bar{u}_{n+1}-x}\leq D_\opM\ce_n.
\end{equation*}
Combining these inequalities yields
\begin{equation*}
 \frac12\|w_{n+1}-x\|^2-\frac12\|w_n-x\|^2 \le \langle w_n-\bar u_{n+1}, x-\bar u_{n+1}\rangle + 2(D_{\opM}+\ce_{\max})\ce_n.  
\end{equation*}

As $-\beta_n\opG(\bar u_{n+1})-\alpha(\bar u_{n+1}-w_n)\in \opM(\bar u_{n+1})$ and $v\in\opM(x)$, it follows by monotonicity of $\opM$ and by $\mu$-monotonicity of $\opG$ that
\begin{align*}
\langle w_n-\bar u_{n+1}, x-\bar u_{n+1}\rangle 
& \le \frac1\alpha \langle v, x-\bar u_{n+1}\rangle +\frac{\beta_n}\alpha \langle \opG(\bar u_{n+1}), x-\bar u_{n+1}\rangle \\
& \le \frac1\alpha \langle v, x-\bar u_{n+1}\rangle +\frac{\beta_n}\alpha \langle \opG(x), x-\bar u_{n+1}\rangle - \frac{\mu\beta_n}{\alpha} \|x-\bar u_{n+1}\|^2.
\end{align*}
Now we note that 
\begin{align*}
\|x-\bar u_{n+1}\|^2
&= \|x-w_{n+1}\|^2+\|w_{n+1}-\bar u_{n+1}\|^2+2\langle x-w_{n+1}, w_{n+1}-\bar u_{n+1}\rangle \\
&\ge \|x-w_{n+1}\|^2-2(\|x-\bar u_{n+1}\|+\|\bar u_{n+1}-w_{n+1}\|)\cdot \|w_{n+1}-\bar u_{n+1}\| \\
&\ge \|x-w_{n+1}\|^2-2(D_\opM+\ce_{\max})\ce_n.
\end{align*}
Combining the previous three relationships yields
\begin{align*}
 \frac12\|w_{n+1}-x\|^2-\frac12\|w_n-x\|^2 &\le \frac1\alpha \langle v, x-\bar u_{n+1}\rangle +\frac{\beta_n}\alpha \langle \opG(x), x-\bar u_{n+1}\rangle -\frac{\mu\beta_n}{\alpha}\norm{x-w_{n+1}}^2\\
 &\quad+2\frac{\mu\beta_n}{\alpha}(D_\opM+\ce_{max})\ce_n+2(D_{\opM}+\ce_{\max})\ce_n  .
\end{align*}
Rearranging the terms and bounding $\beta_n\le \beta_{\max}$ gives
\begin{equation*}
\left(\frac12+\frac{\mu\beta_n}{\alpha}\right)\|w_{n+1}-x\|^2-\frac12\|w_n-x\|^2\le \frac1\alpha \langle v, x-\bar u_{n+1}\rangle +\frac{\beta_n}\alpha \langle \opG(x), x-\bar u_{n+1}\rangle+2\left(1+\frac{\mu\beta_{\max}}{\alpha}\right)(D_{\opM}+\ce_{\max})\ce_n.
\end{equation*}
After noting that 
\begin{align*}
 \langle v, x-\bar u_{n+1}\rangle = \langle v, x-w_{n+1}\rangle+\langle v, w_{n+1}-\bar u_{n+1}\rangle \le  \langle v, x-w_{n+1}\rangle+C_\opM\ce_n,
\end{align*}
and 
\begin{equation*}
\langle \opG(x), x-\bar u_{n+1}\rangle = \langle \opG(x), x-w_{n+1}\rangle + \langle \opG(x), w_{n+1}-\bar u_{n+1}\rangle\le \langle \opG(x), x-w_{n+1}\rangle+C_\opG\ce_n,
\end{equation*}
and multiplying both sides by $\lambda_n\alpha$, we obtain
\begin{align*}
    \lambda_n\inner{v,w_{n+1}-x}&+\lambda_n\beta_n\inner{\opG(x),w_{n+1}-x}\\ &\leq -\left(\frac12+\frac{\mu\beta_n}{\alpha}\right)\lambda_n\alpha\|w_{n+1}-x\|^2 +\frac{\lambda_n\alpha}{2}\norm{w_n-x}^2 \\ &\quad+\left(C_\opM+\beta_{max}C_\opG+2\left(1+\frac{\mu\beta_{\max}}{\alpha}\right)(\ce_{\max}+D_\opM)\alpha\right)\lambda_n\ce_n.
\end{align*}
The results follow by recalling the definition of $\lambda_n$.
\end{proof}
The lower bound in the preceding proposition enables us to deduce bounds on both the optimality and feasibility gaps, provided the remaining term can be controlled. The next two lemmas accomplish precisely this.
\begin{lemma}[Optimality]\label{thm:red:opt}
For all $N\ge 1$, it holds that 
    \[
	\GapOpt(\overline{w}_{N})\leq C_2\cdot \frac{1}{\sum_{n=0}^{N-1}\lambda_n\beta_n}+C_1\cdot \frac{\sum_{n=0}^{N-1}\lambda_n\ce_n}{\sum_{n=0}^{N-1}\lambda_n\beta_n},
    \]
    where $C_1$ is defined in eq. \eqref{eq:red:C1} and 
    \begin{equation}\label{eq:red:C2}
    	C_2 \eqdef \alpha(\ce_0^2+D_\opM^2).
    \end{equation}
   In particular, if Assumption \ref{ass:params} holds, all weak accumulation points $\tilde{w}$ of $(\overline{w}_n)$ satisfy $\GapOpt(\tilde{w})\leq 0$.
\end{lemma}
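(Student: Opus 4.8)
The plan is to specialise Proposition~\ref{lemma:main} to feasible test points, telescope the resulting one-step recursion, and then normalise by the accumulated weights so that the convex combination $\overline{w}_{N}$ appears on the left-hand side. Throughout I exploit that every $v\in\scrS_{0}=\Zer(\opM)$ satisfies $0\in\opM v$ (and in particular $v\in\dom(\opM)$).

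Concretely, I would first fix $v\in\scrS_{0}$ and apply \eqref{eq:energy} with $x=v$ and with the dual element $0\in\opM v$ in the role of the generic element there; its first left-hand term $\lambda_{n}\inner{0,w_{n+1}-v}$ vanishes, leaving $\lambda_{n}\beta_{n}\inner{\opG v,w_{n+1}-v}\le\frac{\alpha\lambda_{n}}{2}\norm{w_{n}-v}^{2}-\frac{\alpha\lambda_{n+1}}{2}\norm{w_{n+1}-v}^{2}+C_{1}\lambda_{n}\ce_{n}$ for $n=0,\dots,N-1$. Summing over $n$ collapses the quadratic terms to $\frac{\alpha\lambda_{0}}{2}\norm{w_{0}-v}^{2}-\frac{\alpha\lambda_{N}}{2}\norm{w_{N}-v}^{2}$; using $\lambda_{0}=1$ and discarding the nonpositive terminal term (legitimate since $\lambda_{N}\ge1>0$) yields $\sum_{n=0}^{N-1}\lambda_{n}\beta_{n}\inner{\opG v,w_{n+1}-v}\le\frac{\alpha}{2}\norm{w_{0}-v}^{2}+C_{1}\sum_{n=0}^{N-1}\lambda_{n}\ce_{n}$. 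Next I would use linearity of $\inner{\opG v,\cdot}$ and the definition $\overline{w}_{N}=\big(\sum_{n<N}\lambda_{n}\beta_{n}\big)^{-1}\sum_{n<N}\lambda_{n}\beta_{n}w_{n+1}$ to rewrite the left-hand side as $\big(\sum_{n<N}\lambda_{n}\beta_{n}\big)\inner{\opG v,\overline{w}_{N}-v}$, divide by $\sum_{n<N}\lambda_{n}\beta_{n}$, and bound $\norm{w_{0}-v}^{2}\le2\ce_{0}^{2}+2D_{\opM}^{2}$ uniformly in $v\in\scrS_{0}$ — via the tracking estimate \eqref{eq:temporalerror} for the initial anchor together with $\diam(\dom(\opM))=D_{\opM}$ — so that $\frac{\alpha}{2}\norm{w_{0}-v}^{2}\le C_{2}$ with $C_{2}$ as in \eqref{eq:red:C2}. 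Taking the supremum over $v\in\scrS_{0}$ gives the asserted bound on $\GapOpt(\overline{w}_{N})$.

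For the ``in particular'' claim I would invoke Assumption~\ref{ass:params}: since $\sum_{n\ge0}\lambda_{n}\beta_{n}=\infty$, the $C_{2}$-term tends to $0$; and since $\ce_{n}=o(\beta_{n})$ one has $\lambda_{n}\ce_{n}=o(\lambda_{n}\beta_{n})$, so a Toeplitz/Stolz--Ces\`aro averaging argument along the divergent weights $(\lambda_{n}\beta_{n})$ gives $\big(\sum_{n<N}\lambda_{n}\ce_{n}\big)\big/\big(\sum_{n<N}\lambda_{n}\beta_{n}\big)\to0$. Hence the whole upper bound vanishes, so $\limsup_{N}\GapOpt(\overline{w}_{N})\le0$; and if $\tilde w$ is a weak accumulation point of $(\overline{w}_{n})$, weak lower semicontinuity of $\GapOpt$ (noted after \eqref{eq:GapFeas}) gives $\GapOpt(\tilde w)\le\liminf_{k}\GapOpt(\overline{w}_{n_{k}})\le0$.

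The telescoping and the convexity identity are routine. The two points that need care are matching the surviving initial quadratic term to the precise constant $C_{2}$ — i.e.\ controlling the arbitrary initial anchor $w_{0}$ through the tracking bound and the boundedness of $\dom(\opM)$ — and the Toeplitz averaging step, which is where Assumption~\ref{ass:params} is genuinely used: the combination $\ce_{n}=o(\beta_{n})$ with the divergence of $\sum_{n}\lambda_{n}\beta_{n}$ is exactly what forces the accumulated inner-loop error to be asymptotically negligible relative to the accumulated step mass.
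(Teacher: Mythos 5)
Your proposal is correct and follows essentially the same route as the paper: specialize Proposition~\ref{lemma:main} to $x\in\scrS_{0}$ with the dual element $0\in\opM x$, telescope, drop the nonpositive terminal term, bound the initial quadratic by $C_{2}$ via the tracking estimate and $D_{\opM}$, normalize to bring out $\overline{w}_{N}$, take the supremum, and conclude via Assumption~\ref{ass:params} and weak lower semicontinuity of $\GapOpt$. The only difference is that you spell out the Toeplitz averaging step that the paper leaves implicit.
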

\begin{proof}
Select $x\in \Zer(\opM)$ and $v=0$ in \eqref{eq:energy}, and sum the result for $n=0, \ldots, N-1$ to obtain that
\begin{align*}
\sum_{n=0}^{N-1}\lambda_n\beta_n\langle \opG(x), w_{n+1}-x\rangle
& \le \frac{-\alpha \lambda_N}{2}\|w_N-x\|^2+\frac{\alpha\lambda_0}{2}\|w_0-x\|^2+C_1\cdot \sum_{n=0}^{N-1}\lambda_n\ce_n \\
& \le {\alpha\lambda_0}(\|w_0-\bar u_{0}\|^2+\|\bar u_{0}-x\|^2)+C_1\cdot \sum_{n=0}^{N-1}\lambda_n\ce_n \\
& \le {\alpha}(\ce_0^2+D_\opM^2)+C_1\cdot \sum_{n=0}^{N-1}\lambda_n\ce_n.
\end{align*}
After normalization, this yields
\begin{equation*}
\langle \opG(x), \bar w_{N}-x\rangle
\le C_2\cdot \frac{1}{\sum_{n=0}^{N-1}\lambda_n\beta_n}+C_1\cdot \frac{\sum_{n=0}^{N-1}\lambda_n\ce_n}{\sum_{n=0}^{N-1}\lambda_n\beta_n}.
\end{equation*}
The inequality follows by taking the supremum over $x\in \Zer(\opM)$. The convergence conclusion follows by weak sequential lower-semicontinuity of $\GapOpt$.
\end{proof}
\begin{lemma}[Feasibility]\label{thm:red:feas}
Assume $(\beta_n)$ non-increasing. For all $N\ge 1$, it holds that 
    \[
        0\leq \GapFeas(\overline{w}_{N})\leq C_3\cdot \frac{1}{\sum_{n=0}^{N-1}\lambda_n\beta_n}+C_4\cdot \frac{\sum_{n=0}^{N-1}\lambda_n\beta_n^2}{\sum_{n=0}^{N-1}\lambda_n\beta_n}+C_1\cdot \frac{\sum_{n=0}^{N-1}\lambda_n\beta_n\ce_n}{\sum_{n=0}^{N-1}\lambda_n\beta_n},
    \]
    where 
    \begin{equation}\label{eq:red:C34}
        C_3=\beta_0C_2,\quad C_4\eqdef C_\opG(\ce_{\max}+D_\opM)
    \end{equation}
    where $C_1$ and $C_2$ are defined in Equations \eqref{eq:red:C1} and \eqref{eq:red:C2}.
    In particular, if Assumption \ref{ass:params} holds, all weak cluster point $\tilde{w}$ of $(\overline{w}_n)$ satisfy $\GapFeas(\tilde{w})=0$, such that $\tilde w\in \Zer(\opM)$.
\end{lemma}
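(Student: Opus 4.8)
The plan is to apply Proposition~\ref{lemma:main} with $x=y$ for an arbitrary pair $y\in\dom(\opM)$, $v\in\opM y$, and then convert the resulting per-iteration inequality into a statement about the $(\lambda_n\beta_n)$-weighted average $\overline{w}_N$. The crucial bookkeeping point is that the feasibility average carries the weights $\lambda_n\beta_n$, whereas \eqref{eq:energy} produces the term $\lambda_n\langle v,w_{n+1}-x\rangle$ with weight $\lambda_n$ only; so I would first multiply \eqref{eq:energy} through by $\beta_n\ge 0$ and only afterwards sum over $n=0,\dots,N-1$. This gives, on the left, exactly $\sum_{n=0}^{N-1}\lambda_n\beta_n\langle v,w_{n+1}-y\rangle=\big(\sum_{n=0}^{N-1}\lambda_n\beta_n\big)\langle v,\overline{w}_N-y\rangle$.

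On the right, three groups of terms appear. First, $-\sum_n\lambda_n\beta_n^2\langle\opG y,w_{n+1}-y\rangle$: here I would use $\|w_{n+1}-y\|\le\|w_{n+1}-\bar u_{n+1}\|+\|\bar u_{n+1}-y\|\le\ce_n+D_\opM\le\ce_{\max}+D_\opM$ together with the definition of $C_\opG$ to bound this sum by $C_4\sum_n\lambda_n\beta_n^2$. Second, the ``almost telescoping'' group $\sum_n\beta_n\big(\tfrac{\alpha\lambda_n}{2}\|w_n-y\|^2-\tfrac{\alpha\lambda_{n+1}}{2}\|w_{n+1}-y\|^2\big)$: the extra factor $\beta_n$ breaks exact telescoping, so I would apply Abel summation, writing it as $\beta_0 a_0+\sum_{n\ge1}(\beta_n-\beta_{n-1})a_n-\beta_{N-1}a_N$ with $a_n=\tfrac{\alpha\lambda_n}{2}\|w_n-y\|^2\ge0$; since $(\beta_n)$ is non-increasing and $a_n,a_N\ge0$, every term other than $\beta_0 a_0$ is nonpositive, and $\beta_0 a_0\le\beta_0\alpha(\ce_0^2+D_\opM^2)=C_3$ by the same splitting $\|w_0-y\|^2\le2\ce_0^2+2D_\opM^2$ used in Lemma~\ref{thm:red:opt}. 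Third, the term $C_1\sum_n\lambda_n\beta_n\ce_n$ passes through unchanged. Dividing by $\sum_{n=0}^{N-1}\lambda_n\beta_n$ and taking the supremum over all admissible $(y,v)$ delivers the claimed upper bound.

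The lower bound $\GapFeas(\overline{w}_N)\ge0$ is immediate: since $\scrS_0=\Zer(\opM)\neq\emptyset$, any $x^\ast\in\scrS_0$ furnishes the admissible pair $y=x^\ast$, $v=0\in\opM x^\ast$, which contributes $\langle0,\overline{w}_N-x^\ast\rangle=0$ to the supremum defining $\GapFeas$. For the convergence conclusion, under Assumption~\ref{ass:params} all three ratios on the right vanish as $N\to\infty$: $1/\sum\lambda_n\beta_n\to0$ and $\sum\lambda_n\beta_n^2/\sum\lambda_n\beta_n\to0$ are assumed directly, and $\sum\lambda_n\beta_n\ce_n/\sum\lambda_n\beta_n\to0$ follows from $\ce_n=o(\beta_n)$ by a Toeplitz/Ces\`aro-type argument (split the sum at the index beyond which $\ce_n\le\varepsilon\beta_n$, bound the tail using $\beta_n\le\beta_{\max}$, and let the head vanish against the diverging denominator). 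Hence $\GapFeas(\overline{w}_N)\to0$, and since $\GapFeas$ is weakly lower semicontinuous, any weak cluster point $\tilde w$ of $(\overline{w}_n)$ obeys $0\le\GapFeas(\tilde w)\le\liminf_N\GapFeas(\overline{w}_N)=0$, so $\GapFeas(\tilde w)=0$ and therefore $\tilde w\in\Zer(\opM)$.

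The step I expect to be the main obstacle is the Abel-summation argument for the second group: one must verify that the sign hypotheses ($(\beta_n)$ non-increasing, $a_n\ge0$) are precisely what makes the non-telescoping remainder drop out, and that the surviving boundary term is genuinely controlled by $C_3=\beta_0 C_2$ rather than by a larger, $N$-dependent quantity. Everything else is a routine adaptation of the proof of Lemma~\ref{thm:red:opt}, obtained by systematically inserting the extra $\beta_n$ factor before summing.
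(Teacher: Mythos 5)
Your proposal is correct and follows essentially the same route as the paper: multiply \eqref{eq:energy} by $\beta_n$ before summing, bound $-\langle \opG x, w_{n+1}-x\rangle$ by $C_4$ via the triangle inequality through $\bar u_{n+1}$, and exploit the monotonicity of $(\beta_n)$ to reduce the non-telescoping sum to the single boundary term $\beta_0\cdot\tfrac{\alpha\lambda_0}{2}\|w_0-x\|^2\le C_3$. The paper achieves this last step by substituting $\beta_{n+1}\le\beta_n$ into the negative term so the sum telescopes directly, whereas you use Abel summation — an equivalent bookkeeping of the same sign conditions — and the remaining steps (lower bound via $v=0$ at a zero of $\opM$, Toeplitz argument for the $\ce_n$ ratio, weak lower semicontinuity for the cluster-point claim) match the paper's argument.
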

\begin{proof}
Multiply both sides of \eqref{eq:energy} by $\beta_n$, and recall that $\beta_{n+1}\le \beta_n$. Moreover, notice that
\begin{equation*}
    \inner{\opG(x),w_{n+1}-x}\geq-\norm{\opG(x)}\cdot\norm{w_{n+1}-x}\ge -\|\opG(x)\|\cdot(\|w_{n+1}-\bar u_{n+1}\| + \|\bar u_{n+1}-x\|)\geq-C_{\opG}(\ce_n+D_{\opM}).
\end{equation*}
It then follows
$-\langle \opG(x), w_{n+1}-x\rangle\le C_\opG(\ce_{\max}+D_\opM)$. Summing for $n=0, \ldots, N-1$ then yields
\begin{align*}
\sum_{n=0}^{N-1}\lambda_n\beta_n\langle v, w_{n+1}-x\rangle 
&\le -\frac{\alpha \lambda_N\beta_N}{2}\|w_N-x\|^2+\frac{\alpha \lambda_0\beta_0}{2}\|w_0-x\|^2+C_1\cdot \sum_{n=0}^{N-1}\lambda_n\beta_n\ce_n\\
&\quad +C_\opG(\ce_{\max}+D_\opM)\sum_{n=0}^{N-1}\lambda_n\beta_n^2 \\
&\le C_3+C_1\cdot \sum_{n=0}^{N-1}\lambda_n\beta_n\ce_n+C_4\cdot \sum_{n=0}^{N-1}\lambda_n\beta_n^2.
\end{align*}
By dividing by $\sum_{n=0}^{N-1}\lambda_n\beta_n$ and taking the supremum over points $x\in\scrS_{0}$ on both sides, we obtain the claimed inequality. The convergence conclusion again follows by weak sequential lower semicontinuity of $\GapFeas$.
\end{proof}

We may now combine the two previous lemmas to state our main theorem of this paper. We establish a combined complexity result in terms of the two introduced gap functions using the averaged trajectory $\bar{w}_{N}$.

\begin{theorem}[Main Theorem]\label{thm:red:main}
Let $(w_n)_{n=0}^{N-1}$ be generated through Algorithm \ref{algo:outer_loop}. Assume $(\beta_n)_{n\geq 0}$ is non-increasing. Then the following statements hold true: 
\begin{itemize}
    \item[(a)] For all $N\ge 1$, the optimality gap is bounded as 
    \begin{equation}\label{eq:Opt1}
        -C_\opG\cdot \dist(\overline{w}_{N}, \scrS_{0})\le \GapOpt(\overline{w}_{N})\leq C_2\cdot \frac{1}{\sum_{n=0}^{N-1}\lambda_n\beta_n}+C_1\cdot \frac{\sum_{n=0}^{N-1}\lambda_n\ce_n}{\sum_{n=0}^{N-1}\lambda_n\beta_n},
    \end{equation}
    where $C_1$ and $C_2$ are problem-specific constants defined in \eqref{eq:red:C1} and \eqref{eq:red:C2}.
    \item[(b)] For all $N\geq 1$, the feasibility gap is bounded as 
    \begin{equation}\label{eq:feas1}
        0\leq \GapFeas(\overline{w}_{N})\leq C_3\cdot \frac{1}{\sum_{n=0}^{N-1}\lambda_n\beta_n}+C_4\cdot \frac{\sum_{n=0}^{N-1}\lambda_n\beta_n^2}{\sum_{n=0}^{N-1}\lambda_n\beta_n}+C_1\cdot \frac{\sum_{n=0}^{N-1}\lambda_n\beta_n\ce_n}{\sum_{n=0}^{N-1}\lambda_n\beta_n},
    \end{equation}
    where $C_3$ and $C_4$ are constants defined in \eqref{eq:red:C34}. 
\item[(c)]     With Assumption \ref{ass:params} in place, all weak accumulation points of $(\overline{w}_{n})_{n\geq 1}$ are solutions to \eqref{eq:P}.
\item[(d)]    If, moreover, $\scrS_{0}$ is weakly sharp with constant $\kappa$ or order $\rho\geq 1$, then it also holds that
    \[
        \GapOpt(\overline{w}_{N})\ge -C_\opG \kappa^{-1/\rho}\cdot \left(C_3\cdot \frac{1}{\sum_{n=0}^{N-1}\lambda_n\beta_n}+C_4\cdot \frac{\sum_{n=0}^{N-1}\lambda_n\beta_n^2}{\sum_{n=0}^{N-1}\lambda_n\beta_n}+C_1\cdot \frac{\sum_{n=0}^{N-1}\lambda_n\beta_n\ce_n}{\sum_{n=0}^{N-1}\lambda_n\beta_n}\right)^{1/\rho}.
    \]
\end{itemize}
\end{theorem}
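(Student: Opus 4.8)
The statement is largely an assembly of results already in hand: the upper bound in item~(a) is precisely Lemma~\ref{thm:red:opt}, item~(b) is precisely Lemma~\ref{thm:red:feas} (including the trivial lower bound $\GapFeas\ge 0$, valid since $\scrS_0\neq\emptyset$), and both rest on the one-step estimate~\eqref{eq:energy} of Proposition~\ref{lemma:main}. The genuinely new work lies in the two lower bounds and the accumulation-point claim, which I would treat in the order (a)-lower, (c), (d). For the lower bound in~(a): since $\scrS_0=\Zer(\opM)$ is nonempty, convex and compact (the remark after Assumption~\ref{ass:1}), the metric projection $v^\star\eqdef P_{\scrS_0}(\overline{w}_N)$ is well defined with $\norm{\overline{w}_N-v^\star}=\dist(\overline{w}_N,\scrS_0)$; since $v^\star\in\scrS_0\subseteq\dom(\opM)$ one has $\norm{\opG v^\star}\le C_\opG$, and Cauchy--Schwarz gives
\[
\GapOpt(\overline{w}_N)=\sup_{v\in\scrS_0}\inner{\opG v,\overline{w}_N-v}\ge\inner{\opG v^\star,\overline{w}_N-v^\star}\ge-\norm{\opG v^\star}\cdot\norm{\overline{w}_N-v^\star}\ge-C_\opG\cdot\dist(\overline{w}_N,\scrS_0).
\]

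For~(c), I would feed Assumption~\ref{ass:params} into the two upper bounds: since $\sum_{n=0}^{N-1}\lambda_n\beta_n\to\infty$ the terms of order $1/\sum_n\lambda_n\beta_n$ vanish, the hypothesis on $(\lambda_n\beta_n^2)$ kills the $C_4$-term, and $\ce_n=o(\beta_n)$ forces the two ratios $\sum\lambda_n\ce_n/\sum\lambda_n\beta_n$ and $\sum\lambda_n\beta_n\ce_n/\sum\lambda_n\beta_n$ to vanish. Hence $\limsup_N\GapOpt(\overline{w}_N)\le 0$ and $\GapFeas(\overline{w}_N)\to 0$. The sequence $(\overline{w}_n)$ is bounded (each $w_{n+1}$ lies within $\ce_{\max}$ of the bounded set $\dom(\opM)$, and $\overline{w}_n$ is a convex combination of such points), hence has weak accumulation points; for any $\tilde{w}=\wlim\overline{w}_{n_k}$, weak lower semicontinuity of $\GapOpt$ and $\GapFeas$ gives $\GapOpt(\tilde{w})\le 0$ and $\GapFeas(\tilde{w})\le 0$. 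The latter yields $\tilde{w}\in\Zer(\opM)=\scrS_0$, and then feasibility together with $\GapOpt(\tilde{w})\le 0$ is — as noted right after~\eqref{eq:GapOpt} — equivalent to $\tilde{w}\in\Zer(\opG+\NC_{\scrS_0})$, i.e.\ $\tilde{w}$ solves~\eqref{eq:P}.

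For~(d), I would chain the lower bound of~(a) with the weak-sharpness inequality~\eqref{eq:WS} at $u=\overline{w}_N$, which rearranges to $\dist(\overline{w}_N,\scrS_0)\le\kappa^{-1/\rho}\,\GapFeas(\overline{w}_N)^{1/\rho}$; substituting the upper bound on $\GapFeas(\overline{w}_N)$ from item~(b) and using monotonicity of $t\mapsto t^{1/\rho}$ on $[0,\infty)$ yields the stated estimate. The one subtlety is that~\eqref{eq:WS} requires $\overline{w}_N\in\dom(\opM)$; this holds whenever the transportation map $\opZ$ takes values in $\dom(\opM)$ and $\dom(\opM)$ is convex, e.g.\ in the forward--backward setting of the remark after Assumption~\ref{ass:add}, where $\dom(\opM)$ is closed convex and every $w_{n+1}$ lands inside it. I expect this domain check — rather than any nontrivial estimate — to be the main thing to watch, since all the analytical heavy lifting has already been done in Proposition~\ref{lemma:main} and Lemmas~\ref{thm:red:opt}--\ref{thm:red:feas}.
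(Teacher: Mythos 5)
Your proposal is correct and follows essentially the same route as the paper: the upper bounds are read off from Lemmas~\ref{thm:red:opt} and~\ref{thm:red:feas}, the lower bound in (a) is Cauchy--Schwarz against a nearest point of $\scrS_0$ (your projection argument is in fact cleaner than the paper's, which has a sign slip in the displayed inequality), (c) combines the two lemmas' accumulation-point statements, and (d) chains the weak-sharpness bound \eqref{eq:WS} with the feasibility estimate. Your remark that \eqref{eq:WS} is only stated for $u\in\dom(\opM)$, so that applying it to $\overline{w}_N$ needs the averaged iterates to land in (the convex) $\dom(\opM)$, is a legitimate gap in the paper's own proof that your domain check correctly addresses.
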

\begin{proof}
\begin{itemize}
    \item[(a)] The upper bound is just Lemma \ref{thm:red:opt}. For the lower bound, we invoke the Cauchy-Schwarz inequality $\inner{\opG(x),\bar{w}_{N}-x}\geq \norm{\opG(x)}\cdot\norm{\bar{w}_{N}-x}$, implying 
    \[
    \GapOpt(\bar{w}_{N})\geq - C_{\opG}\dist(\bar{w}_{N},\scrS_{0}).
    \]
    This proves the assertion.
    \item[(b)] This is just Lemma \ref{thm:red:feas}.
    \item[(c)] Combine Lemma \ref{thm:red:opt} with Lemma \ref{thm:red:feas}. 
    \item[(d)] If $\scrS_{0}$ is weakly sharp with constant $\kappa$ of order $\rho\geq 1$, then 
    \begin{align*}
    \GapOpt(\bar{w}_{N})&\geq -C_{\opG}\dist(\bar{w}_{N},\scrS_{0})\\
    &\geq -C_{\opG}\left[\frac{1}{\kappa}\GapFeas(\bar{w}_{n})\right]^{1/\rho}\\
    &\geq -\frac{C_{\opG}}{\kappa^{1/\rho}}\left(C_3\cdot \frac{1}{\sum_{n=0}^{N-1}\lambda_n\beta_n}+C_4\cdot \frac{\sum_{n=0}^{N-1}\lambda_n\beta_n^2}{\sum_{n=0}^{N-1}\lambda_n\beta_n}+C_1\cdot \frac{\sum_{n=0}^{N-1}\lambda_n\beta_n\ce_n}{\sum_{n=0}^{N-1}\lambda_n\beta_n}\right)^{1/\rho}.
        \end{align*}
\end{itemize}
\end{proof}

By selecting specific regularization parameters, we may obtain explicit rates on the convergent quantities, allowing direct comparisons to prior works. The choice of parameters differs between the solely monotone setting, detailed below, and the strongly monotone setting, detailed in Corollary \ref{coro:main2}.

\begin{corollary}\label{coro:main}
Let $(w_n)_{n=0}^{N}$ be generated through Algorithm \ref{algo:outer_loop}. Assume that $\opG$ is merely monotone (i.e. $\mu=0$). Let $\beta_n=(n+1)^{-b}$ for $b\in (0, 1)$ and choose $\eps_{n}$ such that $\ce_n=o(\beta_n)$. Then, for all $N\ge 2^{1/(1-b)}$, it holds that 
    \[
        \GapOpt(\overline{w}_{N})\leq \mathcal O\left(\frac1{N^{1-b}}\right),
    \]
	as well
    \[
        0\leq \GapFeas(\overline{w}_{N})\leq\begin{cases}\mathcal O\left(\frac1{N^{b}}\right) & \text{if $b\in (0, 0.5)$}\\
		\mathcal O\left(\frac{\ln(N)}{N^{1/2}}\right) & \text{if $b=0.5$} \\
		\mathcal O\left(\frac1{N^{1-b}}\right) & \text{if $b\in (0.5, 1)$}.
		\end{cases}
    \]

    If, moreover, $\scrS_{0}$ is weakly sharp with constant $\kappa>0$ of order $\rho\geq 1$, then it also holds that
    \[
        \GapOpt(\overline{w}_{N})\ge\mathcal O(\GapFeas(\overline w_N)^{1/\rho}).
    \]
\end{corollary}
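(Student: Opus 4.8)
The plan is to specialize Theorem~\ref{thm:red:main} to the chosen parameters. The first, decisive simplification is that $\mu=0$ forces $\lambda_{n}=\prod_{i=0}^{n-1}\bigl(1+\tfrac{2\mu\beta_{i}}{\alpha}\bigr)=1$ for every $n$, so every $\lambda_{n}$-weight in \eqref{eq:Opt1}, in \eqref{eq:feas1}, and in the weak-sharpness statement disappears, and the whole corollary reduces to estimating the partial sums $\Sigma_{1}(N)\eqdef\sum_{n=0}^{N-1}(n+1)^{-b}$ and $\Sigma_{2}(N)\eqdef\sum_{n=0}^{N-1}(n+1)^{-2b}$, together with the error sums $\sum_{n=0}^{N-1}\ce_{n}$ and $\sum_{n=0}^{N-1}(n+1)^{-b}\ce_{n}$.

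Next I would dispatch these sums by elementary integral comparison. For $b\in(0,1)$ one gets $\Sigma_{1}(N)=\Theta(N^{1-b})$, and the hypothesis $N\ge 2^{1/(1-b)}$ is precisely what turns the lower bound into a clean inequality $\Sigma_{1}(N)\ge c_{b}N^{1-b}$ with an explicit constant $c_{b}$. For $\Sigma_{2}(N)$ the three advertised regimes appear: $\Sigma_{2}(N)=\Theta(N^{1-2b})$ if $b<\tfrac12$, $\Sigma_{2}(N)=\Theta(\ln N)$ if $b=\tfrac12$, and $\Sigma_{2}(N)=\Theta(1)$ if $b>\tfrac12$. For the error sums, $\ce_{n}=o(\beta_{n})$ already gives $\ce_{n}\le\beta_{n}$ eventually, hence $\sum_{n=0}^{N-1}(n+1)^{-b}\ce_{n}=\mathcal O\bigl(\Sigma_{2}(N)\bigr)$; to control $\sum_{n=0}^{N-1}\ce_{n}$ I would additionally take the inner tolerances $\eps_{n}$ decaying fast enough that $(\ce_{n})$ is summable --- consistent with $\ce_{n}=o(\beta_{n})$ since $b<1$ (for instance $\ce_{n}\asymp(n+1)^{-1-\delta}$), and costing only $\mathcal O(\log n)$ inner KM iterations by Corollary~\ref{coro:complexity} --- so that $\sum_{n=0}^{N-1}\ce_{n}=\mathcal O(1)$ and a fortiori $\sum_{n=0}^{N-1}(n+1)^{-b}\ce_{n}=\mathcal O(1)$.

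Substituting into Theorem~\ref{thm:red:main} then closes the argument. Part~(a) yields $\GapOpt(\overline{w}_{N})\le C_{2}/\Sigma_{1}(N)+C_{1}\bigl(\sum_{n=0}^{N-1}\ce_{n}\bigr)/\Sigma_{1}(N)=\mathcal O(N^{-(1-b)})$. Part~(b) yields $0\le\GapFeas(\overline{w}_{N})\le C_{3}/\Sigma_{1}(N)+C_{4}\Sigma_{2}(N)/\Sigma_{1}(N)+C_{1}\bigl(\sum_{n=0}^{N-1}(n+1)^{-b}\ce_{n}\bigr)/\Sigma_{1}(N)$, and a case check --- using $1/\Sigma_{1}(N)=\Theta(N^{-(1-b)})$ together with the above estimate of $\Sigma_{2}(N)$ --- shows that the term $C_{4}\Sigma_{2}(N)/\Sigma_{1}(N)$ dominates and equals $\Theta(N^{-b})$, $\Theta(\ln N/N^{1/2})$, $\Theta(N^{-(1-b)})$ in the regimes $b<\tfrac12$, $b=\tfrac12$, $b>\tfrac12$ respectively (in the first two cases $N^{-(1-b)}$ is of strictly smaller order, in the last it coincides). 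Finally, the weak-sharpness bound is immediate from the chain already used in the proof of Theorem~\ref{thm:red:main}(d), namely $\GapOpt(\overline{w}_{N})\ge -C_{\opG}\dist(\overline{w}_{N},\scrS_{0})\ge -C_{\opG}\kappa^{-1/\rho}\GapFeas(\overline{w}_{N})^{1/\rho}$, which is exactly $\GapOpt(\overline{w}_{N})\ge\mathcal O\bigl(\GapFeas(\overline{w}_{N})^{1/\rho}\bigr)$.

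The only point requiring genuine care is the bookkeeping of the $\ce_{n}$-terms: the bare hypothesis $\ce_{n}=o(\beta_{n})$ only delivers $\sum_{n=0}^{N-1}\ce_{n}=o(\Sigma_{1}(N))$, i.e.\ an $o(1)$ contribution to the optimality gap, which is insufficient for a polynomial rate --- so one must exploit the freedom in the inner-loop tolerance schedule to impose a fast (summable) decay of $(\ce_{n})$, recording that this is cheap by Corollary~\ref{coro:complexity}. Everything else --- the integral comparisons for $\Sigma_{1},\Sigma_{2}$ and the three-way dominance check in the feasibility bound --- is routine.
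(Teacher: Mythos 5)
Your proposal is correct and follows essentially the same route as the paper: set $\lambda_n\equiv 1$ (since $\mu=0$), estimate $\sum_{n=0}^{N-1}(n+1)^{-b}$ and $\sum_{n=0}^{N-1}(n+1)^{-2b}$ by integral comparison, and substitute into Theorem~\ref{thm:red:main}. Your extra care with the term $\sum_{n}\ce_n$ is well placed: the paper's proof simply asserts that ``the conclusions follow'' from $\ce_n=o(\beta_n)$, whereas, as you observe, that hypothesis alone only yields an $o(1)$ contribution to the optimality gap, so one genuinely needs a faster (e.g.\ summable) decay of $(\ce_n)$ --- consistent with the polynomial tolerance schedules $\eps_n=\bar\eps\,(n+1)^{-\eta}$ used in the paper's experiments and costing only logarithmically many inner iterations by Corollary~\ref{coro:complexity} --- to obtain the stated $\mathcal O(N^{-(1-b)})$ rate.
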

\begin{proof}
	By standard integral bounds, one can show that for $c\in (0, 1)$ and $N\ge 2^{1/(1-b)}$,
	\[
	\frac{N^{1-c}}{2(1-c)}\leq\sum_{n=0}^{N-1}(n+1)^{-c}\le \frac{N^{1-c}}{1-c}.
	\]
	Specifically, $\sum_{n=0}^{N-1}\beta_n=\Theta(N^{1-b})$, and 
	\[
		\frac{\sum_{n=0}^{N-1}\beta_n^2}{\sum_{n=0}^{N-1}\beta_n}=\begin{cases}\Theta(N^{-b}) & \text{if $b\in (0, 0.5)$}\\
		\Theta(\ln(N)N^{-1/2}) & \text{if $b=0.5$} \\
		\Theta(N^{b-1}) & \text{if $b\in (0.5, 1)$}
		\end{cases}.
	\]
    
	As $\ce_n=o(\beta_n)$, the conclusions follow by Theorem \ref{thm:red:main}.
\end{proof}

\begin{corollary}\label{coro:main2}
Let $(w_n)_{n=0}^{N}$ be generated through Algorithm \ref{algo:outer_loop} when $\opG$ is strongly monotone (i.e. $\mu>0$). Let $\beta_n=\frac{\alpha}{2\mu n+\xi}$ for $\xi> 0$ and let $\ce_n=o(\beta_n)$. For all $N$ sufficiently large, it holds that 
    \[
        \GapOpt(\overline{w}_{N})\leq \mathcal O\left(\frac1{N}\right),
    \]
	and that 
    \[
        0\leq \GapFeas(\overline{w}_{N})\leq \mathcal O\left(\frac {\log(N)}N\right).
    \]
    If, moreover, $\scrS_{0}$ is weakly sharp with constant $\kappa$ of order $\rho\geq 1$, then it also holds that $\GapOpt(\overline{w}_{N})\ge\mathcal O(\GapFeas(\overline w_N)^{1/\rho}).$
\end{corollary}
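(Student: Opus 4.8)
The plan is to specialize \cref{thm:red:main} to the prescribed parameters, so that the whole argument reduces to estimating the scalar sums $\sum_{n=0}^{N-1}\lambda_n\beta_n$, $\sum_{n=0}^{N-1}\lambda_n\beta_n^2$, $\sum_{n=0}^{N-1}\lambda_n\ce_n$ and $\sum_{n=0}^{N-1}\lambda_n\beta_n\ce_n$ that enter the bounds of \cref{thm:red:opt} and \cref{thm:red:feas}. The first and decisive step is an explicit formula for $\lambda_n$: since $\beta_i=\alpha/(2\mu i+\xi)$ gives $1+\tfrac{2\mu\beta_i}{\alpha}=\tfrac{2\mu(i+1)+\xi}{2\mu i+\xi}$, the product defining $\lambda_n$ telescopes to $\lambda_n=\tfrac{2\mu n+\xi}{\xi}=\Theta(n)$, and therefore $\lambda_n\beta_n\equiv\alpha/\xi$ is \emph{constant} in $n$. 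This is the structural reason the strongly monotone regime is so much better behaved. Note also that $(\beta_n)$ is non-increasing with $\beta_0=\alpha/\xi$ and $\beta_n\to 0$, so \cref{ass:params}(1) holds and \cref{thm:red:main} applies.

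With $\lambda_n\beta_n$ constant the remaining estimates are routine. We have $\sum_{n=0}^{N-1}\lambda_n\beta_n=\alpha N/\xi=\Theta(N)$, so the leading term $C_2\big/\sum_{n=0}^{N-1}\lambda_n\beta_n$ of the optimality bound is $\Theta(1/N)$; and $\lambda_n\beta_n^2=(\alpha/\xi)\beta_n=\Theta(1/n)$ gives, by the usual integral comparison, $\sum_{n=0}^{N-1}\lambda_n\beta_n^2=\Theta(\log N)$, so the $C_4$-term of \cref{thm:red:feas} is $\Theta(\log N/N)$. For the error terms, constancy of $\lambda_n\beta_n$ yields $\sum_{n=0}^{N-1}\lambda_n\beta_n\ce_n=(\alpha/\xi)\sum_{n=0}^{N-1}\ce_n=o(\log N)$ under $\ce_n=o(\beta_n)=o(1/n)$, so dividing by $\Theta(N)$ the error term in \cref{thm:red:feas} is $o(\log N/N)$. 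Inserting these into \cref{thm:red:main}(b) gives $0\le\GapFeas(\bar w_N)\le\mathcal O(\log N/N)$.

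The optimality gap needs slightly more care, and this is the only genuinely delicate point. Its error term is $C_1\sum_{n=0}^{N-1}\lambda_n\ce_n\big/\sum_{n=0}^{N-1}\lambda_n\beta_n$, and because $\lambda_n=\Theta(n)$ the hypothesis $\ce_n=o(\beta_n)$ only yields $\lambda_n\ce_n=o(1)$, hence — by a Cesàro argument — the ratio is $o(1)$, which suffices for $\GapOpt(\bar w_N)\to 0$ but not for the $\mathcal O(1/N)$ rate. To obtain the stated rate one simply takes $(\ce_n)$ decaying a touch faster, so that $(\lambda_n\ce_n)$ is summable; e.g. $\ce_n=\Theta(\beta_n^{2+\delta})$ works, and by \cref{coro:complexity} this still costs only $O(\log n)$ inner iterations per outer step. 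Then $\sum_{n=0}^{N-1}\lambda_n\ce_n=O(1)$, the error term is $O(1/N)$, and \cref{thm:red:main}(a) gives $\GapOpt(\bar w_N)\le\mathcal O(1/N)$. (With the weaker $\ce_n\asymp\beta_n^2$ one still recovers the $\tilde{\mathcal O}(1/N)$ rate.)

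For the weak sharpness statement no new summation is needed: the chain already used inside the proof of \cref{thm:red:main}(d), namely $\GapOpt(\bar w_N)\ge -C_\opG\dist(\bar w_N,\scrS_0)\ge -C_\opG\kappa^{-1/\rho}\GapFeas(\bar w_N)^{1/\rho}$, bounds the optimality gap from below in terms of the feasibility gap, and substituting the feasibility rate just established gives $\GapOpt(\bar w_N)\ge\mathcal O\big(\GapFeas(\bar w_N)^{1/\rho}\big)$. I anticipate no real obstacle beyond this $(\ce_n)$-bookkeeping and keeping track of which error sum — the unweighted $\sum\lambda_n\ce_n$ for optimality versus the $\beta_n$-weighted $\sum\lambda_n\beta_n\ce_n$ for feasibility — feeds into which bound.
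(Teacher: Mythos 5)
Your proof follows exactly the paper's route: compute $\lambda_n=\frac{2\mu n+\xi}{\xi}$ by telescoping, observe that $\lambda_n\beta_n\equiv\alpha/\xi$ so $\sum_{n=0}^{N-1}\lambda_n\beta_n=N\alpha/\xi$, bound $\sum_{n=0}^{N-1}\lambda_n\beta_n^2=\frac{\alpha^2}{\xi}\sum_{n=0}^{N-1}\frac{1}{2\mu n+\xi}=\mathcal O(\log N)$, and feed these into Theorem \ref{thm:red:main}. All of these computations are correct and match the paper, which states nothing beyond them.

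The one place you go further is the ``delicate point'' about the optimality error term $C_1\sum_{n=0}^{N-1}\lambda_n\ce_n\big/\sum_{n=0}^{N-1}\lambda_n\beta_n$, and your observation there is right: under the stated hypothesis $\ce_n=o(\beta_n)$ one only gets $\lambda_n\ce_n=o(1)$, hence $\sum_{n=0}^{N-1}\lambda_n\ce_n=o(N)$ and the term is merely $o(1)$, which does not deliver the claimed $\mathcal O(1/N)$ rate for $\GapOpt$. The paper's proof is silent on this term (it only computes the two $\beta$-sums and says ``the conclusion follows''), so this is a gap in the corollary as stated rather than in your argument; your remedy of requiring $(\lambda_n\ce_n)$ to be summable (e.g.\ $\ce_n=\Theta(\beta_n^{2+\delta})$, which by Corollary \ref{coro:complexity} costs only $O(\log n)$ extra inner iterations per outer step) is the natural fix, and your fallback that $\ce_n\asymp\beta_n^2$ still gives $\mathcal O(\log N/N)$ is also correct. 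Your treatment of the feasibility error term ($\lambda_n\beta_n\ce_n=(\alpha/\xi)\ce_n$ with $\sum\ce_n=o(\log N)$, so it is dominated by the $C_4$ term) and of the weak-sharpness lower bound (direct substitution into Theorem \ref{thm:red:main}(d)) is exactly what the paper intends. In short: same approach, correct execution, plus a legitimate catch that the hypothesis on $\ce_n$ needs strengthening for the optimality rate.
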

\begin{proof}
Under the stated hypothesis, one easily computes that $\lambda_n=\frac{2\mu n+\xi}{\xi}$. As such, 
\[
\sum_{n=0}^{N-1}\lambda_n\beta_n= \frac{N\alpha}{\xi},
\]
and 
\[
\sum_{n=0}^{N-1}\lambda_n\beta_n^2=\frac{\alpha^2}{\xi}\cdot\sum_{n=0}^{N-1}\frac{1}{2\mu n+\xi}\le \frac{\alpha^2}{\xi}\cdot \left(\frac{1}{\xi}+\frac{1}{2\mu}\log\frac{2\mu N+\xi}{\xi}\right)=\mathcal O(\log(N)).
\]
The conclusion follows by applying these estimates to Theorem \ref{thm:red:main}.
\end{proof}

\subsection{Total Complexity Analysis}\label{ssec:comb}

We now finally perform the complete complexity analysis of our scheme \texttt{DANTE} by combining the bounds derived for Algorithms \ref{algo:inner_loop} and \ref{algo:outer_loop}. Given a sequences of user-defined tolerances $(\eps_{n})_{n=0}^{N}$, we known that Algorithm \ref{algo:inner_loop} terminates after at most $\bar{K}_{\eps_{n}}\equiv \bar{K}_{n}$ iterations (Corollary \ref{coro:complexity}, eq. \eqref{eq:Kn}). To assess the total iteration complexity of Algorithm \ref{algo:outer_loop} for a pre-defined number of restarts $N\geq 1$, we thus need to compute the \emph{total iteration complexity}
\begin{equation}\label{eq:TC}
\mathcal{C}(N,(\eps_{n})_{n=0}^{N})\eqdef \sum_{n=0}^{N-1}\K(\eps_{n}).
\end{equation}
To estimate the total iteration complexity, we make one final assumption.

\begin{assumption}\label{ass:add2}
    We assume $\opZ_n$ is domain-forward on $\dom(\opM)$: 
    \[
    \opZ_n(\dom(\opM))\subset \dom(\opM).
    \]
    Finally, we assume that $\theta_{n, k}$ is uniformly lower bounded by a positive quantity, such that $\overline Q_n\le \overline Q<1$.
\end{assumption}

\begin{remark}
   The domain-forwardness assumption may appear restrictive on the first glance. However, for all relevant fixed-point mappings from Section \ref{sec:splitting}, the fixed-point transportation map will take the form of a resolvent operator for which the requirement is naturally fulfilled due to maximal monotonicity of the operator.
\end{remark}

\begin{theorem}\label{thm:comb}
    Let Assumptions \ref{ass:1}-\ref{ass:add2} hold. Given $N\geq 1$, and inner loop tolerance sequence $(\eps_{n})_{n=0}^{N}$, we have 
        \[
       \mathcal{C}(N,(\eps_{n})_{n=0}^{N})\leq\frac{N(2\log(C)+\log(\bar Q))+2\sum_{n=1}^N\log(1/\varepsilon_n)}{\log(1/\bar Q)},
    \]
    where the constant $C$ is defined in \eqref{eq:C} and $\bar{Q}$ is defined in Assumption \ref{ass:KM}. 
    
    Specifically, if $\varepsilon_n=(n+1)^{-\eta}$ and $\beta_{n}=(n+1)^{-b}$ where $\eta\geq b$, then $\mathcal{C}(N,(\eps_{n})_{n=0}^{N})={\mathcal O}(N\log N)$. 
\end{theorem}
\begin{proof}
    By Corollary \ref{coro:complexity}, for each restarting epoch $n\in\{0,\ldots,N-1\}$ the total number of iterations of the inner loop is bounded by 
    \[
       \K(\eps_{n})\leq \bar{K}_{n}=\left\lceil\frac{2\log(C)+2\log(1/\varepsilon_n)}{\log(1/\bar Q)}\right\rceil\le \frac{2\log(C)+2\log(1/\varepsilon_n)}{\log(1/\bar Q)}+1.
    \]
   Hence, %Specifically, the inequalities of Theorem \ref{thm:red:main} are satisfied after at most 
    \[
\sum_{n=0}^{N-1}\bar{K}_n\le \frac{2N\log(C)+2\sum_{n=1}^N\log(1/\varepsilon_n)}{\log(1/\bar Q)}+N=\mathcal {O}\left(N+\sum_{n=0}^{N-1}\log(1/\varepsilon_n)\right)
    \]
    gives an upper bound on the total number of iterations for Algorithm \ref{algo:outer_loop} for $N\geq 1$ restarts. By setting $\varepsilon_n=(n+1)^{-\eta}$, we obtain that $\sum_{n=0}^{N-1}\log(1/\varepsilon_n)\le \eta N\log(N)$. 
\end{proof}

 Theorem \ref{thm:comb} shows that accounting for the total number of inner iterations, rather than only the number of outer iterations, introduces merely a logarithmic factor. This is expected, as double-loop schemes inherently incur such a term. Consequently, the complexities stated in Corollaries \ref{coro:main} and \ref{coro:main2} for the gap functions remain intact, up to logarithmic factors, when accounting for the total number of inner iterations.
%!TEX root = ./main.tex

\section{Relevant Splitting Methods}\label{sec:splitting}

Depending on the structure of $\opM$ in Problem \eqref{eq:P}, the auxiliary problem \eqref{eq:Aux-P} inherits different structural assumptions. The fixed-point encodings $\opT^{(w,\beta)}_{k}$ must be readily available for the numerical computations, and hence should exploit known problem structure. In Section \ref{ssec:2}, we analyze the case where $\opM$ may be written as the sum of two operators, and in Section \ref{ssec:3}, we consider a three-operator splitting scheme.

\begin{remark}
  With the following examples, it will become clear that the fixed-point encoding $\opT^{(w,\beta)}_{k}$ not only depends on the problem parameters $\alpha$ and $\beta$ but also on step size sequences $(\gamma_k)_{k}$. Therefore, we treat the step size policy as an ingredient of the fixed-point encoding strategy and consequently incorporate it in the map $\opT^{(w,\beta)}_{k}$, rather than treating it as a parameter of the inner loop presented in Algorithm \ref{algo:inner_loop}.
\end{remark}

\subsection{Two-Operator Splitting Schemes}\label{sec:FB}\label{ssec:2}

In this section, we focus on problems of the form \eqref{eq:P} with $\opM=\opA+\opF$, namely
\begin{equation}\label{eq:PFB}\tag{P-2-Split}
\VI(\opG,\scrS_{0}),\quad\text{where }\scrS_{0}\eqdef \zer(\opA+\opF). 
\end{equation}
We refine the problem assumptions through the following assumption:
\begin{assumption}\label{ass:FB}
We assume the following on Problem \eqref{eq:PFB}:
\begin{enumerate}
\item The lower-level solution set $\scrS_{0}$ is nonempty;
\item $\opG:\scrH\to\scrH$ is monotone and $L_\opG$-Lipschitz continuous; 
\item $\opF \colon \scrH\to \scrH$ is monotone and $L_\opF$-Lipschitz continuous;
\item $\opA\colon\scrH\to2^{\scrH}$ is maximally monotone with bounded domain.
\end{enumerate}
\end{assumption}
These assumptions imply that $\opM=\opA+\opF$ is maximally monotone with $\dom(\opM)=\dom(\opA)$. 

Given the parameters $\alpha$ and $\beta$ we define the function $\Phi^w_{\alpha,\beta}:\scrH\to\scrH$ by  
\begin{equation*}\label{eq:Phi}
\Phi^w_{\alpha,\beta}(v)\eqdef\opF(v)+\beta\opG(v)+\alpha(v-w),
\end{equation*}
which is Lipschitz continuous with constant $L_{\alpha, \beta}\eqdef L_\opF+\beta L_\opG+\alpha$ and $\alpha$-strongly monotone. Adopting this notation, we can rewrite the auxiliary problem \eqref{eq:Aux-P} as determining the unique element of $\zer(\opA+\Phi_{\alpha,\beta}^w)$. 

\paragraph{Forward-backward splitting.} 
The forward-backward splitting \cite{lions_splitting_1979,passty_ergodic_1979} is a very popular numerical scheme, defined in terms of the fixed-point encoding map $\opT^{(w,\beta)}_\gamma\colon \scrH\to\scrH$ given by 
\begin{equation}\label{eq:FB}\tag{FB}
\opT^{(w,\beta)}_\gamma = \resolvent_{\gamma\opA}\circ (\Id - \gamma \Phi_{\alpha, \beta}^w).
\end{equation}
We note that $\Fix(\opT^{(w,\beta)}_\gamma )=\Zer(\opA+\Phi_{\alpha,\beta}^w)$, so $\opZ=\Id$ is the fixed-point transportation map. This is the setting studied in \cite{marschner_tikhonov_2025}.

\begin{lemma}\label{lem:contraction}
Assume Assumption \ref{ass:FB} holds and let $\opT^{(w,\beta)}_\gamma$ be defined as \eqref{eq:FB}. $\opT^{(w,\beta)}_\gamma$ is $q$-Lipschitz where 
\[
q\eqdef \sqrt{1-\gamma(2\alpha-\gamma\Lip^{2}_{\alpha,\beta})},
\]
uniformly over $w\in\scrH$. Hence, $\opT^{(w,\beta)}_\gamma$ is a contraction when $\gamma\in(0,2\alpha/\Lip^{2}_{\alpha,\beta})$.
\end{lemma}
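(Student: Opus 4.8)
The plan is to use the textbook averaged‑operator decomposition: write $\opT^{(w,\beta)}_\gamma=\resolvent_{\gamma\opA}\circ(\Id-\gamma\Phi^{w}_{\alpha,\beta})$ and bound the Lipschitz constants of the two factors separately, exploiting that Lipschitz moduli multiply under composition. Since $\opA$ is maximally monotone, $\resolvent_{\gamma\opA}=(\Id+\gamma\opA)^{-1}$ is (firmly) nonexpansive, hence $1$‑Lipschitz; so it suffices to show that the forward step $\Id-\gamma\Phi^{w}_{\alpha,\beta}$ is $q$‑Lipschitz with $q=\sqrt{1-\gamma(2\alpha-\gamma\Lip^{2}_{\alpha,\beta})}$.

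First I record the two relevant properties of $\Phi\equiv\Phi^{w}_{\alpha,\beta}$. For $x,y\in\scrH$ one has $\Phi(x)-\Phi(y)=(\opF x-\opF y)+\beta(\opG x-\opG y)+\alpha(x-y)$, which is independent of the anchor $w$ — this is exactly what yields the claimed uniformity in $w$. Monotonicity of $\opF$ and of $\opG$ together with the identity term give $\inner{\Phi(x)-\Phi(y),x-y}\ge\alpha\norm{x-y}^{2}$, so $\Phi$ is $\alpha$‑strongly monotone; the triangle inequality with the Lipschitz constants $L_\opF$ of $\opF$, $\beta L_\opG$ of $\beta\opG$ and $\alpha$ of $\alpha\Id$ gives $\norm{\Phi(x)-\Phi(y)}\le\Lip_{\alpha,\beta}\norm{x-y}$ with $\Lip_{\alpha,\beta}=L_\opF+\beta L_\opG+\alpha$, as already noted before the statement.

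Then I expand the forward step and insert these two bounds:
\[
\begin{aligned}
\norm{(\Id-\gamma\Phi)x-(\Id-\gamma\Phi)y}^{2}
&=\norm{x-y}^{2}-2\gamma\inner{\Phi x-\Phi y,x-y}+\gamma^{2}\norm{\Phi x-\Phi y}^{2}\\
&\le\bigl(1-2\gamma\alpha+\gamma^{2}\Lip^{2}_{\alpha,\beta}\bigr)\norm{x-y}^{2}=q^{2}\norm{x-y}^{2}.
\end{aligned}
\]
Composing with the nonexpansive resolvent gives the $q$‑Lipschitz bound for $\opT^{(w,\beta)}_\gamma$, uniformly in $w$. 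Finally, $q<1$ is equivalent to $2\gamma\alpha-\gamma^{2}\Lip^{2}_{\alpha,\beta}>0$, i.e.\ to $\gamma\in(0,2\alpha/\Lip^{2}_{\alpha,\beta})$, giving the stated contraction range; and $q$ is well defined since $\alpha\le\Lip_{\alpha,\beta}$ and Young's inequality give $2\gamma\alpha\le\gamma^{2}\Lip^{2}_{\alpha,\beta}+\alpha^{2}/\Lip^{2}_{\alpha,\beta}\le\gamma^{2}\Lip^{2}_{\alpha,\beta}+1$, hence $q^{2}\ge0$.

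There is essentially no obstacle here; the computation is routine. The only points requiring (minor) care are the observation that the $w$‑dependent summand $-\alpha w$ is constant in the variable and therefore drops out of every difference (which is the source of uniformity in $w$), and the nonnegativity check $q^{2}\ge0$ so that the square root — and hence the statement — is meaningful.
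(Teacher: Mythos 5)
Your proof is correct and follows essentially the same route as the paper: nonexpansiveness of $\resolvent_{\gamma\opA}$ reduces the claim to bounding the forward step $\Id-\gamma\Phi^{w}_{\alpha,\beta}$, which is then handled by expanding the square and invoking $\alpha$-strong monotonicity and $\Lip_{\alpha,\beta}$-Lipschitz continuity of $\Phi^{w}_{\alpha,\beta}$. The added remarks on uniformity in $w$ and on $q^{2}\ge 0$ are correct and slightly more careful than the paper's own write-up.
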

\begin{proof}
Pick $z_{1},z_{2}\in \scrH$ and set $T=T^{(w,\beta)}_{\gamma}$. By nonexpansiveness of $\resolvent_{\gamma\opA}$, and $\Lip_{\alpha,\beta}$-Lipschitz continuity and $\alpha$-strong monotonicity of $\Phi_{\alpha,\beta}$, 
\begin{align*}
\norm{T(z_1)-T(z_2)}^{2} 
&= \norm{\resolvent_{\gamma\opA}(z_{1}-\gamma\Phi_{\alpha,\beta}(z_{1},w))-\resolvent_{\gamma\opA}(z_{2}-\gamma\Phi_{\alpha,\beta}(z_{2},w))}^{2}\\
&\leq \norm{z_{1}-z_{2}+\gamma(\Phi_{\alpha,\beta}(z_{2},w)-\Phi_{\alpha,\beta}(z_{2},w))}^{2}\\
&=\norm{z_{1}-z_{2}}^{2}+2\inner{z_{1}-z_{2},\Phi_{\alpha,\beta}(z_{2},w)-\Phi_{\alpha,\beta}(z_{1},w)}\\
&\qquad +\gamma^{2}\norm{\Phi_{\alpha,\beta}(z_{1},w)-\Phi_{\alpha,\beta}(z_{2},w)}^{2}\\
&\leq\norm{z_{1}-z_{2}}^{2}-2\gamma\alpha\norm{z_{1}-z_{2}}^{2}+\Lip^{2}_{\alpha,\beta}\gamma^{2}\norm{z_{1}-z_{2}}^{2}\\
&=(1-2\gamma\alpha+\Lip^{2}_{\alpha,\beta}\gamma^{2})\cdot \norm{z_{1}-z_{2}}^{2}.
\end{align*}
The contraction property follows by choosing $\gamma$ so that $\gamma(2\alpha-\gamma\Lip^{2}_{\alpha,\beta})\in(0,1)$.
\end{proof}

\paragraph{Backward-forward splitting.}
Similarly, we can employ the backward-forward splitting \cite{attouch_backward_2018} for the auxiliary problem. Consider the operator 
\begin{equation}\label{eq:BF}\tag{BF}
\opT^{(w,\beta)}_\gamma= (\Id-\gamma \Phi_{\alpha, \beta}^w)\circ \resolvent_{\gamma\opA}.
\end{equation}
Note that $\zer(\opA+\Phi_{\alpha,\beta}^w)=\resolvent_{\gamma\opA}(\text{Fix}(\opT^{(w,\beta)}_\gamma))$, which means that the last iterate of the inner loop scheme needs one more evaluation of the resolvent of the operator $\opA$ to get close to the unique zero of $\opA+\Phi_{\alpha,\beta}^w$. Specifically, the fixed-point transportation map is $\opZ=\resolvent_{\gamma \opA}$.
\begin{lemma}\label{lem:contractionBF}
Assume Assumption \ref{ass:FB} holds, and let $\opT_\gamma$ be the fixed-point encoding defined in \eqref{eq:BF}. It holds that $\opT_\gamma$ is $q$-Lipschitz where 
\[
    q=\sqrt{1-\gamma(2\alpha-\gamma\Lip^{2}_{\alpha,\beta})},
\]
uniformly over $w\in\scrH$. In particular, $\opT^{(w,\beta)}_\gamma$ is a contraction when $\gamma\in(0,2\alpha/\Lip^{2}_{\alpha,\beta})$.
\end{lemma}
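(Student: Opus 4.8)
The plan is to mirror the computation carried out for the forward--backward operator in Lemma~\ref{lem:contraction}, the only difference being that the resolvent is now applied \emph{first}, so its nonexpansiveness enters at the \emph{end} of the estimate rather than at the start. Fix $z_1,z_2\in\scrH$ and set $y_i\eqdef\resolvent_{\gamma\opA}(z_i)$, so that $\opT^{(w,\beta)}_\gamma(z_i)=y_i-\gamma\Phi_{\alpha,\beta}^w(y_i)$ for $i=1,2$. Expanding the square gives
\[
\norm{\opT^{(w,\beta)}_\gamma(z_1)-\opT^{(w,\beta)}_\gamma(z_2)}^2=\norm{y_1-y_2}^2-2\gamma\inner{y_1-y_2,\Phi_{\alpha,\beta}^w(y_1)-\Phi_{\alpha,\beta}^w(y_2)}+\gamma^2\norm{\Phi_{\alpha,\beta}^w(y_1)-\Phi_{\alpha,\beta}^w(y_2)}^2.
\]

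Next I would use the $\alpha$-strong monotonicity of $\Phi_{\alpha,\beta}^w$ to bound the middle term below by $\alpha\norm{y_1-y_2}^2$ and its $\Lip_{\alpha,\beta}$-Lipschitz continuity to bound the last term above by $\gamma^2\Lip_{\alpha,\beta}^2\norm{y_1-y_2}^2$, yielding
\[
\norm{\opT^{(w,\beta)}_\gamma(z_1)-\opT^{(w,\beta)}_\gamma(z_2)}^2\le\left(1-2\gamma\alpha+\gamma^2\Lip_{\alpha,\beta}^2\right)\norm{y_1-y_2}^2.
\]
Since $\Phi_{\alpha,\beta}^w(y_1)-\Phi_{\alpha,\beta}^w(y_2)=\opF(y_1)-\opF(y_2)+\beta(\opG(y_1)-\opG(y_2))+\alpha(y_1-y_2)$ is independent of the anchor $w$, both constants used here are independent of $w$, which is exactly what gives the uniformity claim. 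The scalar factor satisfies $1-2\gamma\alpha+\gamma^2\Lip_{\alpha,\beta}^2\ge(1-\gamma\alpha)^2\ge0$ because $\Lip_{\alpha,\beta}=L_\opF+\beta L_\opG+\alpha\ge\alpha$, so it is a legitimate nonnegative quantity $q^2$. Finally, invoking the nonexpansiveness of $\resolvent_{\gamma\opA}$ (valid by maximal monotonicity of $\opA$), we have $\norm{y_1-y_2}\le\norm{z_1-z_2}$, and substituting this into the last display proves $\norm{\opT^{(w,\beta)}_\gamma(z_1)-\opT^{(w,\beta)}_\gamma(z_2)}\le q\norm{z_1-z_2}$ with $q=\sqrt{1-\gamma(2\alpha-\gamma\Lip_{\alpha,\beta}^2)}$. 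The contraction property $q<1$ then holds precisely when $\gamma(2\alpha-\gamma\Lip_{\alpha,\beta}^2)\in(0,1)$, which occurs on the announced range $\gamma\in(0,2\alpha/\Lip_{\alpha,\beta}^2)$ (shrinking $\gamma$ further if needed to keep the product strictly below $1$), just as in Lemma~\ref{lem:contraction}.

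There is no genuine difficulty here: the estimate is the exact analogue of the forward--backward case. The single point needing care is the order in which the two estimates are chained --- because $\resolvent_{\gamma\opA}$ sits on the \emph{inside} of the composition, one must first obtain a bound in terms of $\norm{y_1-y_2}$ and only afterwards pass to $\norm{z_1-z_2}$, which is legitimate precisely because the intermediate factor $q^2$ is nonnegative. I would also make the $w$-independence of the increment $\Phi_{\alpha,\beta}^w(y_1)-\Phi_{\alpha,\beta}^w(y_2)$ explicit, since that is the only ingredient delivering uniformity of $q$ over $w$.
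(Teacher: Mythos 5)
Your proposal is correct and is exactly the argument the paper intends: the paper's own proof is just the one-line remark that it is ``similar to the one of Lemma~\ref{lem:contraction}'', and your write-up supplies precisely the right instantiation, with the nonexpansiveness of $\resolvent_{\gamma\opA}$ applied at the end of the chain (since the resolvent now sits inside the composition) rather than at the start. The observation that $1-2\gamma\alpha+\gamma^2\Lip_{\alpha,\beta}^2\ge(1-\gamma\alpha)^2\ge 0$, so that $q$ is well defined, is a detail the paper leaves implicit and is worth keeping.
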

\begin{proof}
    The proof is similar to the one of Lemma \ref{lem:contraction}.
\end{proof}

\paragraph{Douglas-Rachford splitting.} Analogously, we can also consider the Douglas-Rachford splitting \cite{lions_splitting_1979}, whose operator $\opT$ is given by
\begin{equation}\label{eq:DR}\tag{DR}
\opT^{(w,\beta)}= \frac12(\Id+R_{\Phi^{w}_{\alpha,\beta}} \circ R_\opA).
\end{equation}
where $R_\bullet$ is the reflected resolvent given by $R_\bullet=2\resolvent_{\bullet}-\Id$. Note that $\resolvent_\opA(\Fix(\opT))=\Zer(\opA+\Phi_{\alpha, \beta}^w)$, such that $\opZ=\resolvent_\opA$ takes over the role of the fixed-point transportation map.

\begin{lemma}{\cite[Corollary 3.1]{moursi_douglas_2019}}\label{lem:contractionDR}
Assume Assumption \ref{ass:FB} holds and let $\opT$ be the fixed-point encoding defined in \eqref{eq:DR}. It holds that $\opT^{(w,\beta)}$ is $q$-Lipschitz where 
\[
    q=\frac12+\frac12\left(\frac{1-2\alpha+ L_{\alpha, \beta}^2}{1+2 \alpha+ L_{\alpha, \beta}^2}\right)^{1/2},
\]
uniformly over $w\in\scrH$. In particular, $\opT^{(w,\beta)}$ is a contraction.
\end{lemma}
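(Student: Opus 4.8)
The statement is quoted from \cite{moursi_douglas_2019}, so the plan is to indicate how that estimate specializes to our notation, which is an entirely self-contained computation. Recall that \eqref{eq:Aux-P} amounts to finding a zero of $\opA+\Phi^w_{\alpha,\beta}$, where $\Phi^w_{\alpha,\beta}$ is $\alpha$-strongly monotone and $L_{\alpha,\beta}$-Lipschitz while $\opA$ is maximally monotone, and that the operator in \eqref{eq:DR} has the form $\opT^{(w,\beta)}=\tfrac12\bigl(\Id+R_{\Phi^w_{\alpha,\beta}}\circ R_\opA\bigr)$. The first step is therefore to reduce the claim to bounding the Lipschitz constant $c$ of $R_{\Phi^w_{\alpha,\beta}}\circ R_\opA$: since $\opT^{(w,\beta)}$ is the average of $\Id$ and a $c$-Lipschitz map, the triangle inequality immediately gives that it is $\tfrac{1+c}{2}$-Lipschitz, so it remains to show $c=\bigl(\tfrac{1-2\alpha+L_{\alpha,\beta}^2}{1+2\alpha+L_{\alpha,\beta}^2}\bigr)^{1/2}$.

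Next, since $\opA$ is maximally monotone, $\resolvent_\opA$ is nonexpansive, hence so is the reflection $R_\opA$, and consequently $c$ is at most the Lipschitz constant of $R_{\Phi^w_{\alpha,\beta}}$. To estimate the latter, I would fix $x,y\in\scrH$, set $u=\resolvent_{\Phi^w_{\alpha,\beta}}x$, $v=\resolvent_{\Phi^w_{\alpha,\beta}}y$, and write $a\eqdef u-v$ and $b\eqdef (x-u)-(y-v)\in\Phi^w_{\alpha,\beta}u-\Phi^w_{\alpha,\beta}v$, so that $x-y=a+b$. Strong monotonicity gives $\inner{a,b}\ge\alpha\norm{a}^2$ and Lipschitz continuity gives $\norm{b}\le L_{\alpha,\beta}\norm{a}$. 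A short expansion shows $R_{\Phi^w_{\alpha,\beta}}x-R_{\Phi^w_{\alpha,\beta}}y=2a-(x-y)=a-b$, whence
\[
\frac{\norm{R_{\Phi^w_{\alpha,\beta}}x-R_{\Phi^w_{\alpha,\beta}}y}^2}{\norm{x-y}^2}=\frac{\norm{a}^2-2\inner{a,b}+\norm{b}^2}{\norm{a}^2+2\inner{a,b}+\norm{b}^2}.
\]
Maximizing the right-hand side over the admissible range $\alpha\norm{a}^2\le\inner{a,b}\le\norm{a}\norm{b}$, $\norm{b}\le L_{\alpha,\beta}\norm{a}$ — the quotient is increasing in $\norm{b}^2$ and decreasing in $\inner{a,b}$ whenever $\inner{a,b}>0$ — yields the bound $\tfrac{1-2\alpha+L_{\alpha,\beta}^2}{1+2\alpha+L_{\alpha,\beta}^2}$, and its numerator is nonnegative because $L_{\alpha,\beta}\ge\alpha$ forces $1-2\alpha+L_{\alpha,\beta}^2\ge(1-\alpha)^2\ge0$. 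Taking square roots gives the Lipschitz constant of $R_{\Phi^w_{\alpha,\beta}}$, hence of $R_{\Phi^w_{\alpha,\beta}}\circ R_\opA$, and therefore $q=\tfrac12+\tfrac12\bigl(\tfrac{1-2\alpha+L_{\alpha,\beta}^2}{1+2\alpha+L_{\alpha,\beta}^2}\bigr)^{1/2}$; since $\alpha>0$ we get $q<1$, and uniformity in $w$ is automatic since neither $\alpha$ nor $L_{\alpha,\beta}$ depends on $w$.

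The only mildly delicate point is the optimization of the Rayleigh-type quotient: one must check that the extremal configuration is $\inner{a,b}=\alpha\norm{a}^2$ jointly with $\norm{b}=L_{\alpha,\beta}\norm{a}$, which is compatible with Cauchy--Schwarz precisely because $\alpha\le L_{\alpha,\beta}$. Everything else reduces to nonexpansiveness of resolvents and the averaging identity; alternatively, one may simply invoke \cite[Corollary 3.1]{moursi_douglas_2019} verbatim with the dictionary $\opA\leftrightarrow$ maximally monotone summand, $\Phi^w_{\alpha,\beta}\leftrightarrow$ strongly monotone Lipschitz summand, and resolvent parameter $\gamma=1$.
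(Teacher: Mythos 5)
Your argument is correct. The paper does not prove this lemma at all -- it simply cites \cite[Corollary 3.1]{moursi_douglas_2019} -- and your proposal is a valid self-contained reconstruction of that result: the reduction to the Lipschitz constant of $R_{\Phi^w_{\alpha,\beta}}\circ R_\opA$ via the averaging identity, the identity $R_{\Phi}x-R_{\Phi}y=a-b$ with $x-y=a+b$, and the maximization of the resulting quotient under $\inner{a,b}\ge\alpha\norm{a}^2$ and $\norm{b}\le L_{\alpha,\beta}\norm{a}$ (compatible since $L_{\alpha,\beta}\ge\alpha$) all check out, and $\alpha>0$ indeed forces $q<1$.
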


\subsection{Three-Operator Splitting Scheme}\label{ssec:3}

In this section we focus on problems of the form \eqref{eq:P} with $\opM=\opF+\opA+\opB$, namely
\begin{equation}\label{eq:PTOS}\tag{P-3-Split}
\VI(\opG,\scrS_{0}),\quad\text{where }\scrS_{0}\eqdef \zer(\opF+\opA+\opB). 
\end{equation}
We refine the problem assumptions through the following assumption:
\begin{assumption}\label{ass:TOS}
We assume the following on Problem \eqref{eq:PTOS}:
\begin{enumerate}
\item the set $\scrS_{0}$ is nonempty;
\item $\opG:\scrH\to\scrH$ is monotone and $L_\opG$-Lipschitz continuous; 
\item $\opF \colon \scrH\to \scrH$ is $L_\opF$-Lipschitz continuous;
\item $\opA, \opB\colon\scrH\to2^{\scrH}$ are maximally monotone such that $\opA$ has bounded domain satisfying $\dom(\opA)\subset \dom(\opB)$.
\end{enumerate}
\end{assumption}
Under these assumptions, $\dom(\opM)=\dom(\opA)$. We note that in most practical examples, $\dom(\opB)=\scrH$, such that $\dom(\opA)\subset \dom(\opB)$ holds naturally.

We consider the auxiliary function $\Psi^w_{\alpha,\beta}:\scrH\to\scrH$ given by 
\begin{equation*}\label{eq:Psi}
\Psi^w_{\alpha,\beta}(v):=\opF(v)+\beta\opG(v)+\alpha(v-w),
\end{equation*}
which is $\alpha$-strongly monotone and $L_{\alpha, \beta}
\eqdef L_\opF+\beta L_\opG+\alpha$-Lipschitz continuous, and hence $\nu_{\alpha, \beta}=\alpha/L_{\alpha, \beta}^2$-cocoercive. The auxiliary Problem \eqref{eq:Aux-P} reads as $\zer(\opA+\opB+\Psi^w_{\alpha,\beta})$, and may be solved through a three-operator splitting scheme \cite{davis_threeoperator_2017a}, namely through the fixed-point encoding map $\opT^{(w,\beta)}_\gamma\colon \scrH\to \scrH$ depending on a step-size $\gamma$ given by
\begin{equation}\label{eq:3op}\tag{TOS}
\opT^{(w,\beta)}_\gamma=\Id-\resolvent_{\gamma\opB}+\resolvent_{\gamma\opA}\circ(2\resolvent_{\gamma\opB}-\Id-\gamma \Psi^w_{\alpha,\beta}\circ\resolvent_{\gamma\opB}).
\end{equation}
We know that $\resolvent_{\gamma \opA}(\Fix(\opT))=\Zer(\opA+\opB+\Psi_{\alpha, \beta}^w)$, such that $\opZ=\resolvent_{\gamma \opA}$ is the fixed-point transportation map. The operator $\opT^{(w,\beta)}_\gamma$ is non-expansive under the provided assumptions, and a contraction for well-chosen $\gamma$.
\begin{lemma}{\cite[Theorem D.6]{davis_threeoperator_2017a}}\label{lem:TOS}
Let $\opT^{(w,\beta)}_\gamma$ be the fixed-point encoding defined in \eqref{eq:3op}. It holds that $\opT^{(w,\beta)}_\gamma$ is non-expansive. Moreover, if $\opB$ is $L_\opB$-Lipschitz continuous and $\gamma<\eta \nu_{\alpha, \beta}$, then, uniformly over $w\in\scrH$, $\opT^{(w,\beta)}_\gamma$ is $\sqrt{1-q}$-Lipschitz with constant 
    \[
        q=\frac{2\gamma\alpha(1-\eta)}{(1+\gamma L_B)^2},
    \]
    where $\eta\in [0, 1]$ is arbitrary. In particular, $\opT^{(w,\beta)}_\gamma$ is a contraction when 
    \[
        \gamma\in\left(0,\frac{2\alpha(1-\eta)-2L_\opB+\sqrt{4\alpha^2(1-\eta^2)-8\alpha(1-\eta)L_\opB}}{2L_\opB^2}\right).
    \]
\end{lemma}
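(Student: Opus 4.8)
The statement is \cite[Theorem D.6]{davis_threeoperator_2017a} specialized to the cocoercive part $\Psi^w_{\alpha,\beta}$, so the plan is to reproduce that averagedness argument and then add the one extra step that converts strong monotonicity into a contraction factor. Fix $z,z'\in\scrH$ and unroll one step of \eqref{eq:3op}: set $x_\opB=\resolvent_{\gamma\opB}(z)$ and $x_\opA=\resolvent_{\gamma\opA}\!\big(2x_\opB-z-\gamma\Psi^w_{\alpha,\beta}(x_\opB)\big)$, so that $\opT^{(w,\beta)}_\gamma(z)=z-x_\opB+x_\opA$, and define $x_\opB',x_\opA'$ analogously from $z'$. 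Recall from the setup of this subsection that $\Psi^w_{\alpha,\beta}$ is $\alpha$-strongly monotone, $L_{\alpha,\beta}$-Lipschitz, and hence $\nu_{\alpha,\beta}$-cocoercive, with all three moduli independent of $w$ (the anchor enters only through the affine shift $\alpha(\cdot-w)$); this is exactly what makes every bound below uniform in $w$.

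\textbf{Non-expansiveness.} I would expand $\norm{\opT^{(w,\beta)}_\gamma(z)-\opT^{(w,\beta)}_\gamma(z')}^2=\norm{(z-z')-(x_\opB-x_\opB')+(x_\opA-x_\opA')}^2$ and feed in the three elementary inequalities: firm nonexpansiveness of $\resolvent_{\gamma\opB}$, i.e. $\inner{z-z',\,x_\opB-x_\opB'}\ge\norm{x_\opB-x_\opB'}^2$; firm nonexpansiveness of $\resolvent_{\gamma\opA}$ on the shifted arguments, i.e. $\inner{2(x_\opB-x_\opB')-(z-z')-\gamma(\Psi^w_{\alpha,\beta}(x_\opB)-\Psi^w_{\alpha,\beta}(x_\opB')),\,x_\opA-x_\opA'}\ge\norm{x_\opA-x_\opA'}^2$; and $\nu_{\alpha,\beta}$-cocoercivity of $\Psi^w_{\alpha,\beta}$. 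Regrouping as in \cite[Appendix D]{davis_threeoperator_2017a} — the point of the regrouping being that the indefinite cross term $\inner{x_\opA-x_\opA',\,\Psi^w_{\alpha,\beta}(x_\opB)-\Psi^w_{\alpha,\beta}(x_\opB')}$ cancels and what remains is $\norm{z-z'}^2$ minus a sum of squares with nonnegative coefficients for $\gamma$ in the standard range $(0,2\nu_{\alpha,\beta})$ — yields $\norm{\opT^{(w,\beta)}_\gamma(z)-\opT^{(w,\beta)}_\gamma(z')}^2\le\norm{z-z'}^2$, in fact averagedness of $\opT^{(w,\beta)}_\gamma$.

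\textbf{Contraction.} To sharpen this, replace the bare cocoercivity inequality by the interpolated one, valid for any $\eta\in[0,1]$ because $\Psi^w_{\alpha,\beta}$ is simultaneously $\nu_{\alpha,\beta}$-cocoercive and $\alpha$-strongly monotone: $\inner{\Psi^w_{\alpha,\beta}(x_\opB)-\Psi^w_{\alpha,\beta}(x_\opB'),\,x_\opB-x_\opB'}\ge\eta\nu_{\alpha,\beta}\norm{\Psi^w_{\alpha,\beta}(x_\opB)-\Psi^w_{\alpha,\beta}(x_\opB')}^2+(1-\eta)\alpha\norm{x_\opB-x_\opB'}^2$. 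Running the same computation, the $\eta\nu_{\alpha,\beta}$-portion plays the role the full cocoercivity played above — this is where the hypothesis $\gamma<\eta\nu_{\alpha,\beta}$ enters, ensuring the $\gamma^2$-term is absorbed — while the $(1-\eta)\alpha$-portion leaves a surplus $-2\gamma(1-\eta)\alpha\norm{x_\opB-x_\opB'}^2$. Finally, since $x_\opB=\resolvent_{\gamma\opB}(z)$ gives $z=x_\opB+\gamma\opB(x_\opB)$ and $\opB$ is $L_\opB$-Lipschitz, $\norm{z-z'}\le(1+\gamma L_\opB)\norm{x_\opB-x_\opB'}$, so $-\norm{x_\opB-x_\opB'}^2\le-(1+\gamma L_\opB)^{-2}\norm{z-z'}^2$; substituting gives $\norm{\opT^{(w,\beta)}_\gamma(z)-\opT^{(w,\beta)}_\gamma(z')}^2\le(1-q)\norm{z-z'}^2$ with $q=\frac{2\gamma\alpha(1-\eta)}{(1+\gamma L_\opB)^2}$. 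Imposing $q\le 1$ is a quadratic inequality in $\gamma$ whose solution, intersected with $\gamma<\eta\nu_{\alpha,\beta}$, is the interval in the statement; for $\gamma$ strictly inside it $q>0$, so $\opT^{(w,\beta)}_\gamma$ is a genuine contraction.

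\textbf{Main obstacle.} The delicate part is the constant bookkeeping in the non-expansiveness step: one must choose precisely how to split the coefficient of $\norm{\Psi^w_{\alpha,\beta}(x_\opB)-\Psi^w_{\alpha,\beta}(x_\opB')}^2$ between the two resolvent contributions so that every indefinite cross term vanishes and the residual is manifestly a sum of squares. This is exactly the Davis--Yin averagedness computation, which I would cite rather than redo; the only genuinely new ingredients are the elementary bound $\norm{x_\opB-x_\opB'}\ge(1+\gamma L_\opB)^{-1}\norm{z-z'}$ and the subsequent quadratic solve for the admissible step sizes, both routine once the averagedness estimate is in hand.
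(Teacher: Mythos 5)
The paper gives no proof of this lemma at all---it is imported verbatim as a citation of \cite[Theorem D.6]{davis_threeoperator_2017a}---and your sketch reconstructs exactly the argument behind that cited result (firm nonexpansiveness of the two resolvents, the interpolated cocoercivity/strong-monotonicity inequality for $\Psi^w_{\alpha,\beta}$, and the bound $\norm{z-z'}\le(1+\gamma L_\opB)\norm{x_\opB-x_\opB'}$ coming from $z=x_\opB+\gamma\opB(x_\opB)$), so it is essentially the same approach and correct. The one place I would not simply assert agreement with the stated interval is the final quadratic solve: $q\le 1$ is equivalent to $\gamma^2L_\opB^2+2\gamma\bigl(L_\opB-\alpha(1-\eta)\bigr)+1\ge 0$, whose solution set is the complement of the interval between the two roots $\frac{2\alpha(1-\eta)-2L_\opB\pm\sqrt{4\alpha^2(1-\eta)^2-8\alpha(1-\eta)L_\opB}}{2L_\opB^2}$, so the admissible step sizes near zero are bounded by the \emph{smaller} root and the discriminant should involve $(1-\eta)^2$ rather than $(1-\eta^2)$---this looks like a transcription issue in the statement itself rather than a flaw in your argument, but your claim that the quadratic solve "is the interval in the statement" inherits it.
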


\begin{remark}\label{rem:TOS_nonexpansive}
    Although we require $\opB$ to be Lipschitz continuous for Assumption \ref{ass:contraction} to be satisfied, we shall allow it not to be in our implementations, as per Remark \ref{rem:nonexpansive}. This is specifically the case in Section \ref{sec:image}.
\end{remark}

%!TEX root = ./main.tex

\section{Numerical Experiments}\label{sec:numerics}

In this section, we develop numerical examples showcasing the versatility and applicability of Algorithm \ref{algo:inner_loop}. Specifically, we consider an equilibrium problem in Section \ref{sec:equi}, a least-norm least-squares problem in Section \ref{sec:lst}, and an image inpainting problem in Section \ref{sec:image}.

All the code may be found on the author's GitHub page\footnote{See \url{https://github.com/Hierarchical-VIs/Regularization-Methods-for-HVIs}.}, and is run on Intel Xeon Platinum 8380 CPUs.

\subsection{Equilibrium selection}\label{sec:equi}

We consider the two-player zero-sum game from \cite{samadi_improved_2025}, given by
\[
    \begin{dcases}
        &\min_{x_1}f_1(x_1, x_2)\eqdef 20-0.1x_1x_2+x_1 \\
        &\text{s.t.}~ x_1\in X_1\eqdef [11, 60]
    \end{dcases}\qquad 
    \begin{dcases}
        &\min_{x_2}f_2(x_1, x_2)\eqdef -20+0.1x_1x_2-x_1 \\
        &\text{s.t.}~ x_2\in X_2\eqdef [10, 50].
    \end{dcases}
\]
Specifically, we seek a saddle point $(x_1^*, x_2^*)\in X\eqdef X_1\times X_2$ of the function $f(x_1, x_2)\eqdef 20-0.1 x_1x_2+x_1$, namely a point that satisfies
\[
    f(x_1^*, x_2)\le f(x_1^*, x_2^*)\le f(x_1, x_2^*)\quad \text{for all $(x_1, x_2)\in X$.}
\]
The solution set is characterized through the inclusion 
\[
    0\in \opF(x_{1},x_{2})+\NC_{X}(x_{1},x_{2})\eqdef \begin{bmatrix}
        \nabla_{x_1} f(x_1, x_2) \\
        -\nabla_{x_2} f(x_1, x_2)
    \end{bmatrix}+\NC_{X}(x_{1},x_{2})=\begin{bmatrix}
        0 & -0.1 \\ 0.1 & 0
    \end{bmatrix}\begin{bmatrix}
        x_1 \\ x_2
    \end{bmatrix} + \begin{bmatrix}
        1 \\ 0
    \end{bmatrix} + \NC_{X}(x_{1},x_{2})
\]
Analytically, we know the set of solutions to be $X_1\times \{10\}$. We attempt to find the best solution, namely to solve the problem 
\[
    \min \left\{\phi(x_1, x_2)\colon (x_1, x_2)\in \Zer(\opF+\NC_X)\right\},
\]
where $\phi(x)=\tfrac12\|x\|^2$, whose analytical solution is $(11, 10)$.
Equivalently, we aim to find a point $(x_1^*, x_2^*)\in \Zer(\opF+NC_X)$ such that 
\[
    \langle \nabla \phi (x_1^*, x_2^*), (w_1, w_2) - (x_1^*, x_2^*)\rangle \ge 0\quad \text{for all $(w_1, w_2)\in \Zer(F+\NC_X)$}.
\]
This matches Problem \eqref{eq:P} with $\opG=\nabla \phi$ for the upper-level operator and $\opM=\opF+\NC_X$ for the lower-level operator.

\paragraph{Experimental Setup.} For all the experiments, we consider the relaxation parameter of the inner loop $\theta_k\equiv \theta=0.7$ to be constant. Across inner loops, we consider the acceleration parameter $\tau_k\equiv \tau$ to be constant and to be the largest value satisfying Equation \eqref{eq:parameterineq}. We assume $\beta_n=(n+1)^{-\eta}$, where $\eta=0.55$, and that $\varepsilon_n=\bar\varepsilon\cdot {(n+1)^{-2}}$, where $\bar\varepsilon=10^{-3}$. We run a total of $1000$ iterations.

\paragraph{Results.}
Figure \ref{fig:EQ_Iterates} shows the averaged iterates $(\overline w_n)$, using forward-backward for the auxiliary problem, for various starting points, along with the feasible region $X$. Recall that, although we call it the feasible region, we have no a priori guarantee that the iterates remain within said region. 
\begin{figure}[H]
    \centering
    \includegraphics[width=1\linewidth]{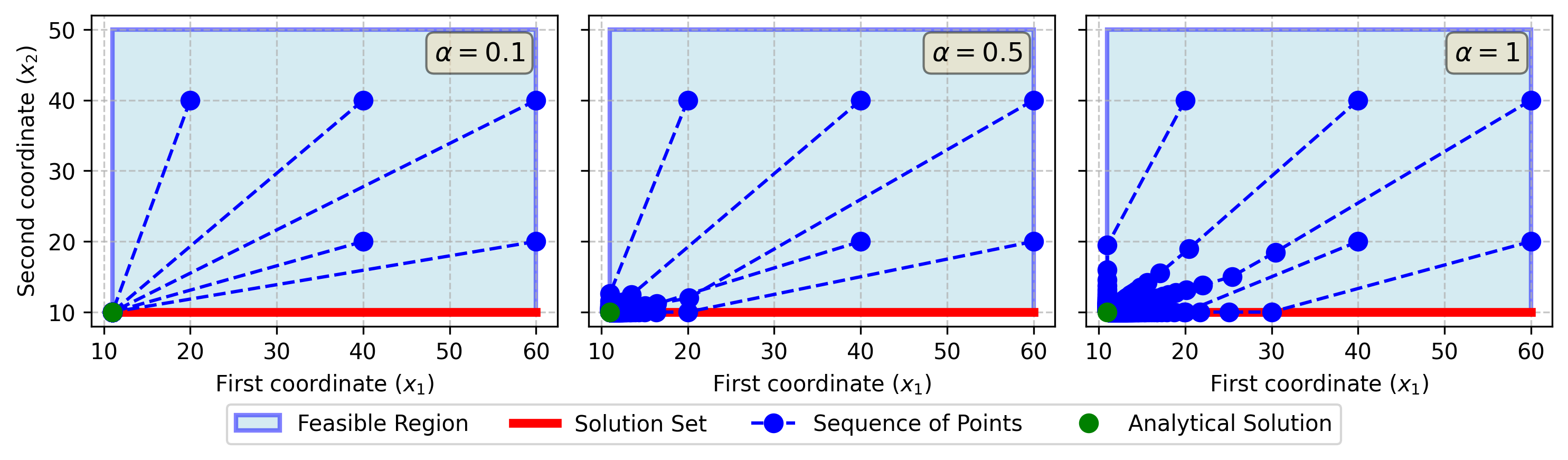}
    \caption{Average iterates for various initial points and proximal parameters.}
    \label{fig:EQ_Iterates}
\end{figure}
Moreover, we note that the optimality and feasibility gaps may be computed through a small subproblem in this simple example. Figures \ref{fig:EQ_Opt} and \ref{fig:EQ_Feas} plot these values for the different initial points. Although we observe differences, we note that all curves are decreasing to $0$. We note the importance of the tuning of the proximal parameter $\alpha$. In the present example, a smaller value yields faster convergence. As will be evidenced in later simulations, this is not always the case.

\begin{figure}[H]
    \centering
    \includegraphics[width=1\linewidth]{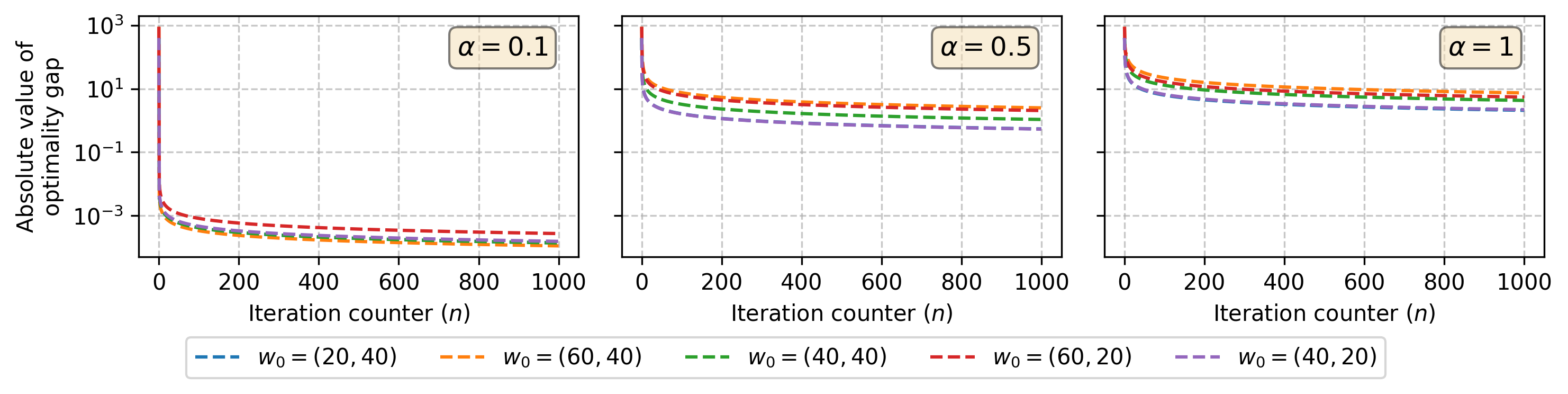}
    \caption{Optimality gap $\GapOpt(\overline w_n)$ for various initial points and proximal parameters.}
    \label{fig:EQ_Opt}
\end{figure}
\begin{figure}[H]
    \centering
    \includegraphics[width=1\linewidth]{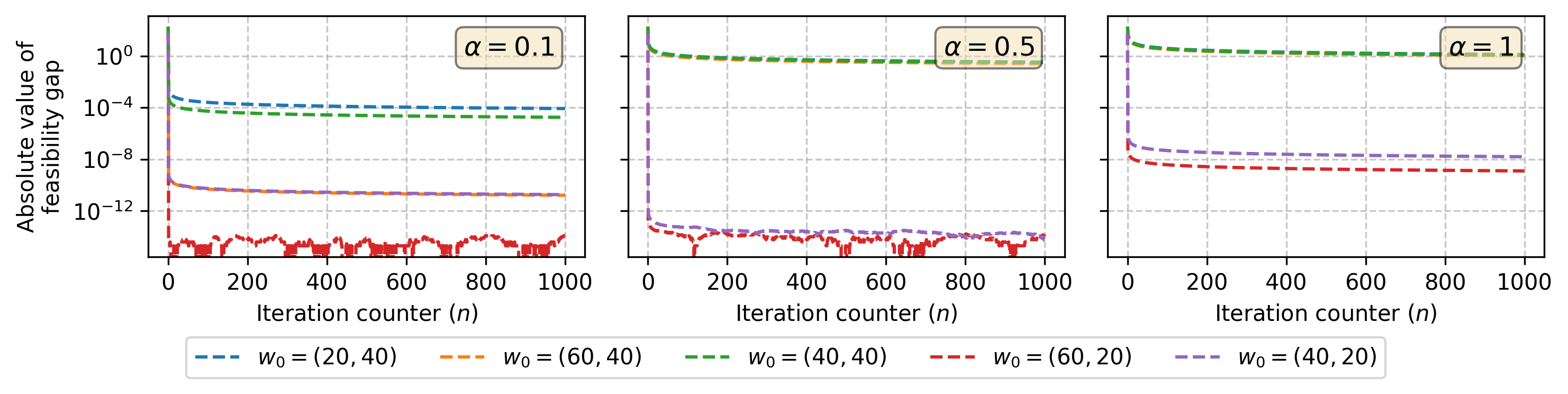}
    \caption{Feasibility gap $\GapFeas(\overline w_n)$ for various initial points and proximal parameters.}
    \label{fig:EQ_Feas}
\end{figure}

In Figures \ref{fig:EQ_Comp_Opt} and \ref{fig:EQ_Comp_Feas}, we compare various fixed-point encodings of the auxiliary problem. Specifically, we implement the methods from Section \ref{sec:FB}, namely forward-backward (FB), backward-forward (BF) and Douglas-Rachford (DR). We set $\bar\varepsilon=1$ and $\alpha=0.1$. We observe different behaviors for the different algorithms, but do not focus further on these in this work.

\begin{figure}[H]
    \centering
    \includegraphics[width=1\linewidth]{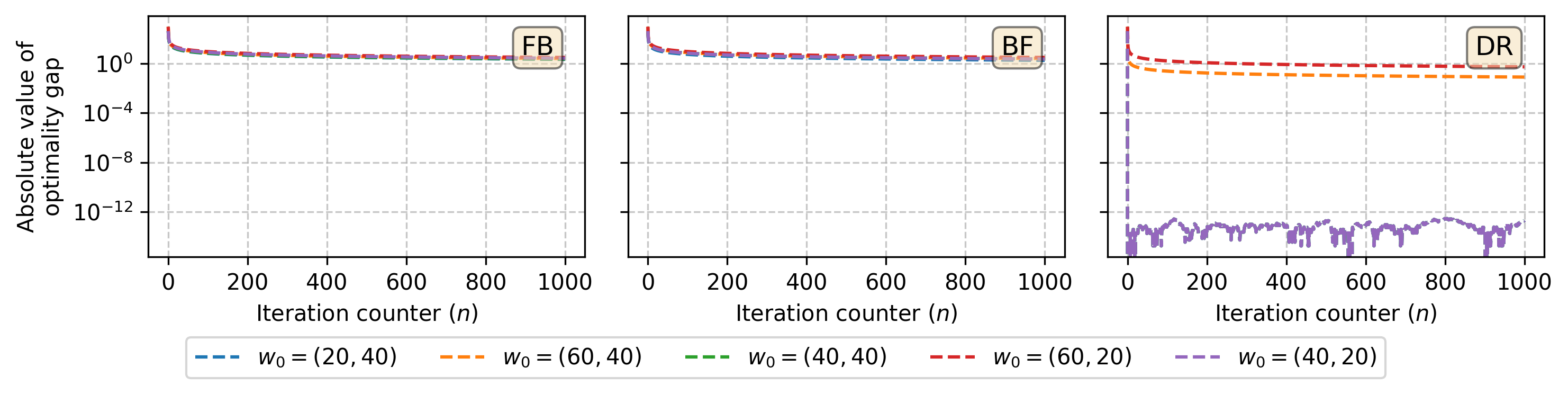}
    \caption{Optimality gap $\GapOpt(\overline w_n)$ for various auxiliary problem fixed-point encodings.}
    \label{fig:EQ_Comp_Opt}
\end{figure}

\begin{figure}[H]
    \centering
    \includegraphics[width=1\linewidth]{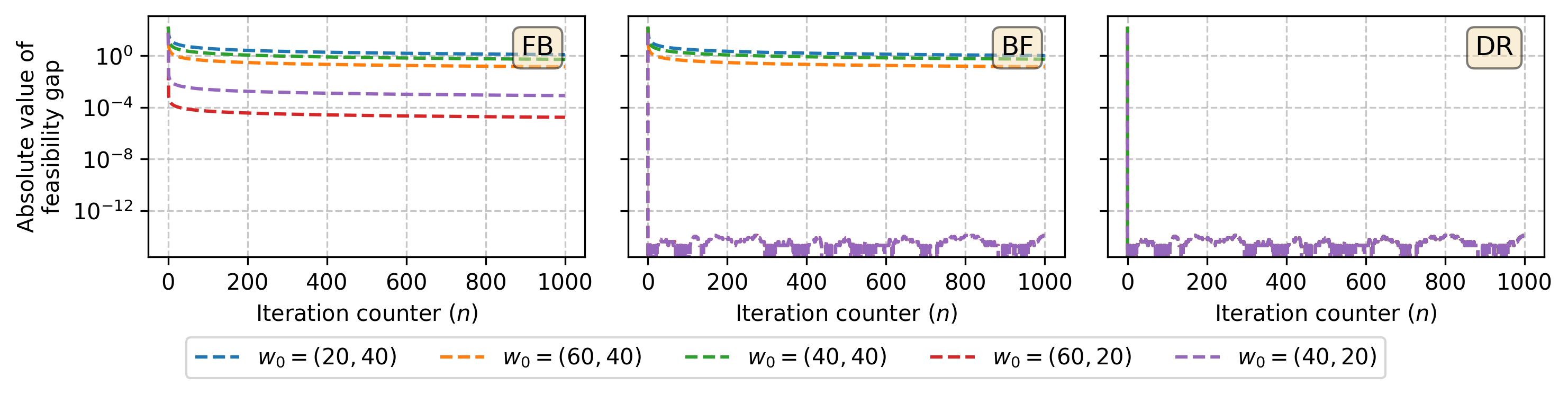}
    \caption{Feasibility gap $\GapFeas(\overline w_n)$ for various auxiliary problem fixed-point encodings.}
    \label{fig:EQ_Comp_Feas}
\end{figure}

\subsection{Least-Norm Least-Squares}\label{sec:lst}

Inspired by \cite{staudigl2023random}, for a matrix $A\in \R^{P\times Q}$ and a vector $b\in \R^P$, we consider the bilevel optimization problem given by
\begin{equation*}\label{eq:Experiment}
\min_{u\in \R^{Q}} \left\{\tfrac{1}{2}\|u\|^2~\colon ~u\in\argmin_{v\in X}\left\{\tfrac12\norm{Av-b}^2\right\}\right\},
\end{equation*}
where $X=[-1000, 1000]^{Q}$. We can reformulate this as a hierarchical variational inequality problem of the form \eqref{eq:PFB} with $\opG(u)=u$, $\opA=\partial\delta_{X}=\NC_{X}$, and $\opF(v)= 2A^{\top} (A v - b)$. We will make use of the forward-backward operator for the inner loop iterations.

\paragraph{Problem Setup.} We test the algorithm on four randomly generated instances with different dimensions given by 
$(P,Q) \in \{(70, 100), (100, 200), (100, 500), (300, 500)\}$. For each instance, the matrix $A$ is generated as a low-rank (and sparse) matrix via $A = U_1\cdot U_2$,
where $U_1 \in \mathbb{R}^{P \times R}$ and $U_2 \in \R^{R \times Q}$ are chosen such that each component follows a standard normal law, with $R=50$ to induce a low-rank structure. Moreover, to ensure the eigenvalues are not too large such that the step-size is not too small, we clip each singular value to the interval $[0, 10]$. A vector $s \in \R^Q$ is randomly generated with $20$ non-zero entries according to a uniform law on $[0, 10]$, and the observation vector $b$ is constructed as $b = A s + \nu$, where $\nu$ is a normal random vector with small entries, namely a standard normal vector scaled by a factor $0.1$. During the random generation, we ensure that $z=A^{\dagger}b\in X$, such that the analytical solution is known to be $z$, where $A^\dagger$ is the Moore-Penrose inverse.

\paragraph{Experimental Setup.} For the inner loop, we set $\theta_k\equiv \theta=0.75$ to be constant. Across inner loops, we consider $\tau_k\equiv \tau$ to be constant and to be the largest value satisfying Equation \eqref{eq:parameterineq}. We set $\beta_n=(n+1)^{-\eta}$, where $\eta=0.55$, and $\varepsilon_n=\overline \varepsilon \cdot (n+1)^{-1}$, where $\overline \varepsilon=\alpha\cdot 10^{-3}$. We run a total of $2000$ iterations, with an initial point $w_0$ randomly generated according to a standard normal distribution scaled by a factor $0.1$.

\paragraph{Results.} Figures \ref{fig:LS_Dist} and \ref{fig:LS_Func} show the evolution of the error sequence $\|w_n-z\|$ and the lower-level function gap sequence $\tfrac12\|Aw_n-b\|^2-\tfrac12\|Az-b\|^2$ across iterations. As expected, we observe a decrease for each problem. We note that the initial decrease for the error sequence is consequential and stagnates for a number of iterations afterward. In fact, for $\alpha=0.1$, the prescribed $2000$ iterations are not enough to observe a second decrease phase, whereas it is sufficient for $\alpha\in \{1, 10\}$.

\begin{figure}[H]
    \centering
    \includegraphics[width=1\linewidth]{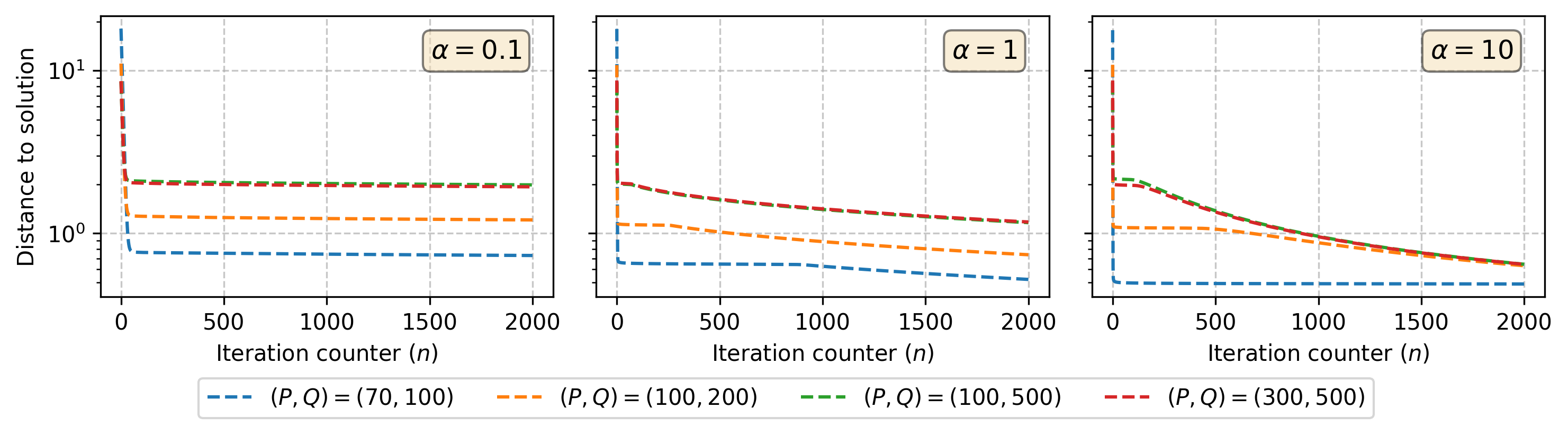}
    \caption{Evolution of error sequence $\|w_n-z\|$ for various problem dimensions and various proximal parameters.}
    \label{fig:LS_Dist}
\end{figure}

\begin{figure}[H]
    \centering
    \includegraphics[width=1\linewidth]{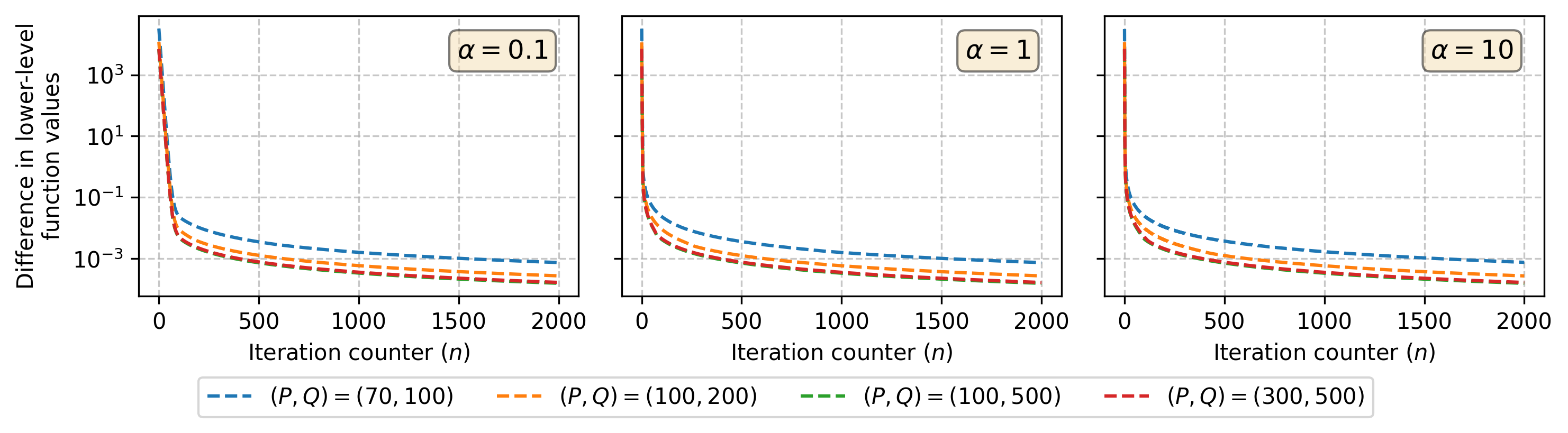}
    \caption{Evolution of lower-level function gap sequence $\tfrac12\|Aw_n-b\|^2-\tfrac12\|Az-b\|^2$ for various problem dimensions and various proximal parameters.}
    \label{fig:LS_Func}
\end{figure}

\subsection{Image Inpainting}\label{sec:image}

Finally, we consider the image inpainting problem, as done in \cite{maulen_inertial_2024,Cortild:2024aa}. We represent a grayscale image $Y$ of dimension $(P, Q)$ by a matrix in $X\eqdef [0, 1]^{P\times Q}\subset \mathcal H\eqdef \R^{P\times Q}$. We denote by $\Omega\in \{0, 1\}^{P\times Q}$ a mask such that $\Omega_{i, j}=0$ indicates that the pixel at position $(i, j)$ has been damaged. We denote by $\mathcal R$ the linear operator that maps an image to an image whose elements in $\Omega$ have been corrupted, namely $\mathcal R\colon \scrH\to \scrH, Y\mapsto \tilde Y$ with $\tilde Y_{i, j}=\Omega_{i, j}\cdot Y_{i, j}$. We note that $\mathcal R$ is a self-adjoint bounded linear operator with operator norm $1$. We define the corrupt image by $Y_\text{corrupt}\eqdef\mathcal R(Y)$. The aim of the image inpainting problem is to recover the original image $Y$ from the corrupt image $Y_\text{corrupt}$ and the corruption map $\mathcal R$. Mathematically, we consider the problem 
\[
\min_{Y\in X}\left\{\tfrac12\|\mathcal R(Y)-Y_\text{corrupt}\|^2+\sigma\|Y\|_*\right\},
\]
where $\|\cdot\|_*$ denotes the nuclear norm, which prones smoothness within the image, and $\sigma$ is a regularization parameter. Specifically, we seek the least-norm solution to the above problem, given by 
\[
\min_{Y\in \scrH}\left\{\tfrac12\|Y\|^2~\colon~ Y\in \argmin_{Y\in X}\left\{\tfrac12\|\mathcal R(Y)-Y_\text{corrupt}\|^2+\sigma\|Y\|_*\right\}\right\}.
\]
This matches Problem \eqref{eq:PTOS} with $\opG=\Id$, $\opA=\sigma \partial \|\cdot\|_*$, $\opB=\NC_X$, and $\opF(Y)=\mathcal R^{*}(\mathcal R(Y)-Y_\text{corrupt})$. We note that Assumption \ref{ass:TOS} is satisfied, and that the operator $\opT$ is nonexpansive per Lemma \ref{lem:TOS}. Although this setting does not strictly conform to our theoretical findings, its convergence properties are expected to be analogous due to the finite-dimensional nature of the problem, as detailed in Remark \ref{rem:TOS_nonexpansive}.

\paragraph{Experimental Setup.} For the inner loop, we set $\theta_k\equiv \theta=0.75$ and consider $\tau_k\equiv \tau$ to be constantly the largest value satisfying Equation \eqref{eq:parameterineq} across every outer iteration. We set $\beta_n=(n+1)^{-\eta}$ for $\eta=0.55$ and $\varepsilon_n=\bar\varepsilon\cdot (n+1)^{-2}$ for $\bar\varepsilon=2$. We run a total of $5000$ iterations, with an initial point $w_0=Y_\text{corrupt}$. We set the regularization parameter to be $\sigma=50$, and generate $\Omega$ randomly such that $20\%$ of the image is corrupted.

\paragraph{Results.} Figure \ref{fig:IP_Restored} shows the original image, the corrupt image, and the image restored using the above procedure for various proximal parameters. Table \ref{tab:IP} reports the total number of inner iterations run, along with the time spent to produce the results. Figure \ref{fig:IP_Curve} plots the lower-level objective across outer iterations for each value of $\alpha$, which is expected to converge to the lower-level objective value and not to $0$, as seen in the plot. We observe a loss in the solution quality for $\alpha=100$, although it is the fastest method. We also note that the solution qualities are very similar for $\alpha=0.1$ and $\alpha=1$, but the latter requires a larger number of total iterations.

\begin{figure}[H]
    \centering
    \includegraphics[width=1\linewidth]{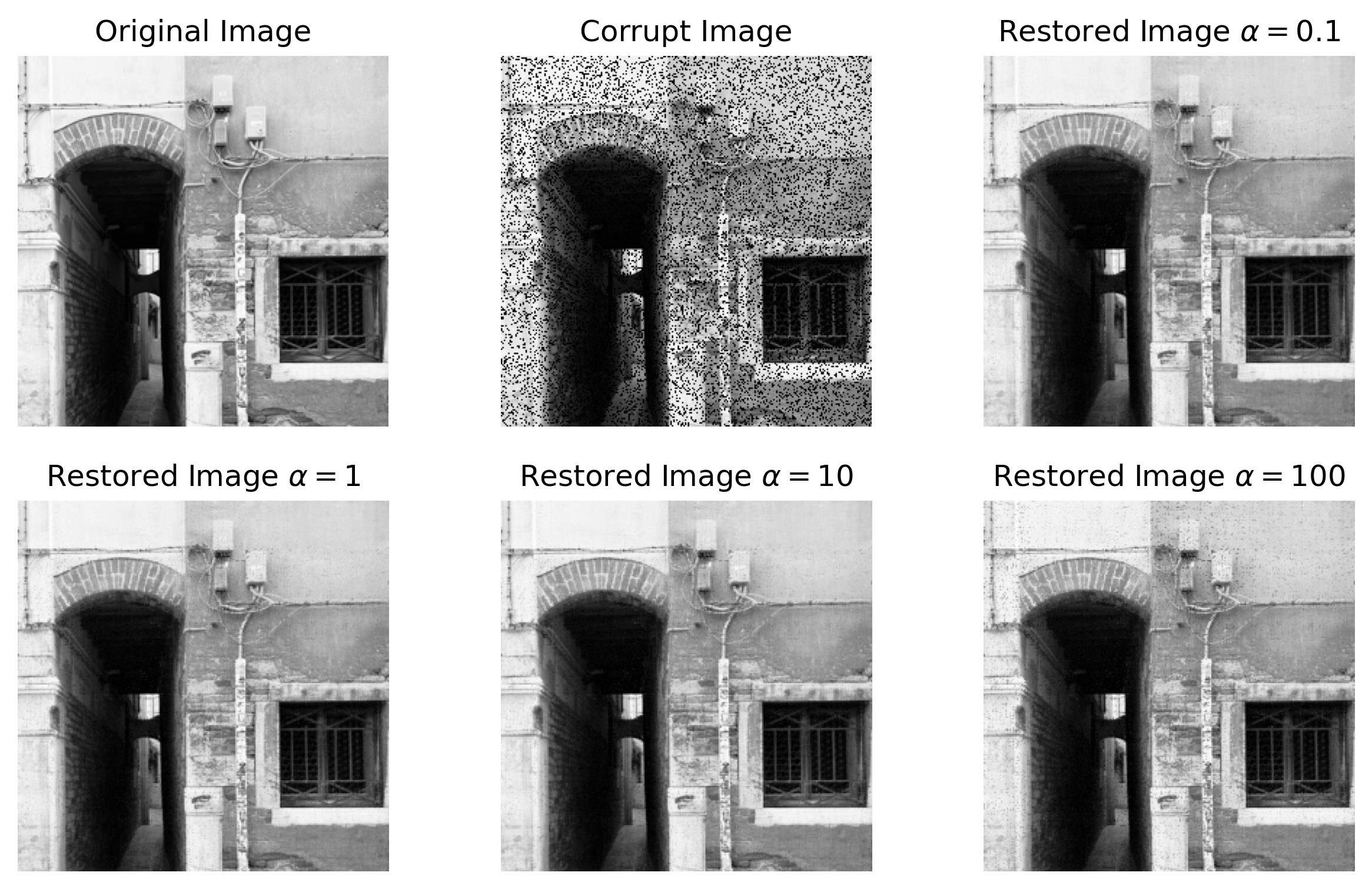}
    \caption{Result of image inpainting procedure for various proximal parameters.}
    \label{fig:IP_Restored}
\end{figure}

\begin{table}[H]
    \centering
    \caption{Number of inner iterations and clocktime to produce Figure \ref{fig:IP_Restored}.}
    \begin{tabular}{|c|c|c|c|c|}\hline
         & $\alpha=0.1$ & $\alpha=1$ & $\alpha=10$ & $\alpha=100$ \\ \hline
        Total inner iterations & 495096 & 156365 & 92867 & 73901 \\\hline
        Time in minutes & 294 & 80 & 38 & 36 \\\hline
    \end{tabular}
    \label{tab:IP}
\end{table}

\begin{figure}[H]
    \centering
    \includegraphics[width=.5\linewidth]{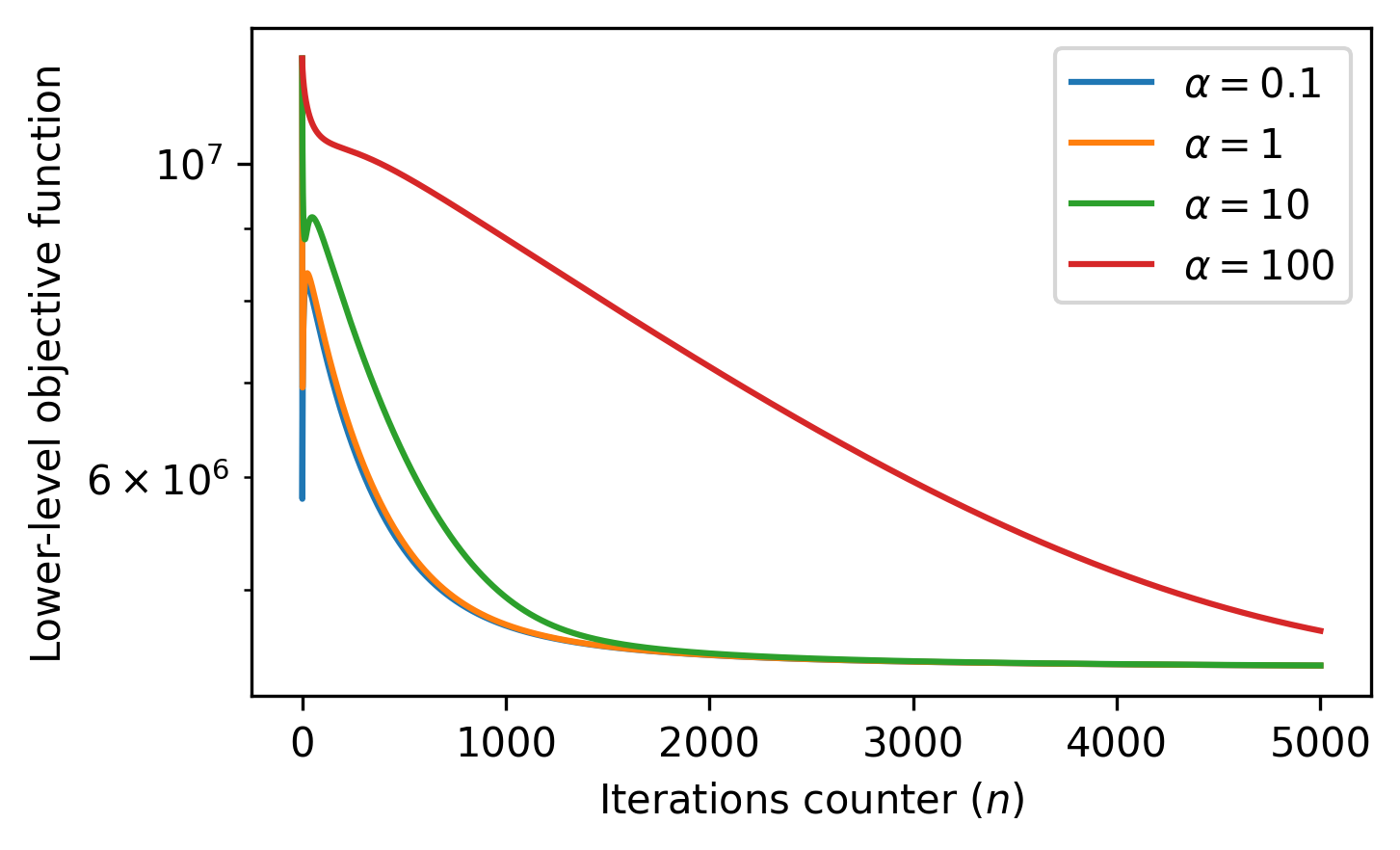}
    \caption{Evolution of lower-level objective function $\tfrac{1}{2}\|\mathcal R(w_n)-Y_{\text{corrupt}}\|^2+\sigma \| w_n\|_*$ for various proximal parameters.}
    \label{fig:IP_Curve}
\end{figure}

%!TEX root = ./main.tex

\section{Conclusion}\label{sec:conc}

In this work, we propose a double-loop path-following method to solve hierarchical variational inequalities. The inner loop aims to solve an auxiliary problem and, given a fixed-point encoding of said problem, employs a Krasnoselskii-Mann-type iteration. This step is very flexible regarding the chosen encoding strategy, such that our method includes various known algorithms. The outer loop takes the form of a restarting procedure, giving rise to a diagonal equilibrium tracking method (\texttt{DANTE}). We obtain weak convergence of the averaged iterates as well as convergence rates, which are comparable to the state-of-the-art results for \emph{specific} numerical schemes. 

We believe that our work opens up possibilities for various interesting directions for future research. First, it would be of practical interest to devise a fully adaptive scheme without restarts. Second, an important extension is the inclusion of stochastic data. These directions are the subject of ongoing research.

\paragraph{Acknowledgements}
Daniel Cortild acknowledges the support of the Clarendon Funds Scholarships. Meggie Marschner and Mathias Staudigl acknowledge financial support from the PGMO and the Deutsche Forschungsgemeinschaft (DFG) - Projektnummer 556222748 "non-stationary hierarchical minimization".

\bibliographystyle{abbrv}
\bibliography{references}

\end{document}